\newtheorem{lemma}{Lemma}
\newtheorem{corollary}{Corollary}
\newtheorem{definition}{Definition}
\newtheorem{theorem}{Theorem}
\newtheorem{problem}{Problem}
\newtheorem{proposition}{Proposition}
\newtheorem{remark}{Remark}
\newtheorem{assumption}{Assumption}
\newcommand{\DeltaG}{\Delta_\Gamma}
\newcommand{\nablaG}{\nabla_\Gamma}
\newcommand{\unit}{\texttt{1}\!\!\texttt{l}}
\def \D #1{\underline{D}_{#1}}
\newcounter{todocounter}
\DeclareRobustCommand{\MyChange}[3][\empty]{%
  {\color{#2}#3}
  \ifthenelse{\isempty{#1}}{}
  {%
    \addtocounter{todocounter}{1}%
    \ifmmode%
        {\color{#2}\text{$^{\framebox{\arabic{todocounter}}}$}}%
    \else%
        {\color{#2}\text{$^{\arabic{todocounter}}$}}%
    \fi%
    \marginpar{\textcolor{#2}{$^{\arabic{todocounter}}$\textnormal{#1}}}%
  }%
}
\definecolor{lightgray}{rgb}{0.7,0.7,0.7}
\definecolor{ghcol}{rgb}{0.7,0.7,0}
\numberwithin{equation}{section}
\numberwithin{lemma}{section}
\numberwithin{corollary}{section}
\numberwithin{theorem}{section}
\numberwithin{proposition}{section}
\numberwithin{problem}{section}
\numberwithin{remark}{section}
\numberwithin{definition}{section}
\numberwithin{assumption}{section}
\title{Second order splitting for a class of fourth order equations}
\author{Charles M. Elliott}
\address{Mathematics Institute, Zeeman Building, University of Warwick, Coventry. CV4 7AL. UK}
\email{C.M.Elliott@warwick.ac.uk}
\author{Hans Fritz}
\address{Fakult{\"a}t f{\"u}r Mathematik,
Universit{\"a}t Regensburg,
93040 Regensburg, Germany}
\email{hans.fritz@ur.de}
\author{Graham Hobbs}
\address{Mathematics Institute, Zeeman Building, University of Warwick, Coventry. CV4 7AL. UK}\email{graham.hobbs@warwickgrad.net}
\thanks{
The work of CME was partially supported by the Royal Society via a Wolfson Research Merit Award. 
HF  thanks the Alexander von Humboldt Foundation, Germany, for their financial
support by a Feodor Lynen Research Fellowship in collaboration with the University of Warwick,
UK. The research of GH was funded by the Engineering and Physical Sciences Research Council grant  EP/H023364/1
 under the MASDOC centre for doctoral training at the University of Warwick.}
\subjclass[2010]{Primary 65N30, 65J10, 35J35}
\date{}
\begin{document}
\setlength\parindent{1cm}

\begin{abstract}
{We formulate a well-posedness and approximation theory for a class of generalised saddle point problems. In this way we develop an approach to a class of fourth order elliptic partial differential equations using  the idea of splitting into coupled second order equations.
Our main motivation is to treat certain fourth order equations on closed surfaces arising in the modelling of 
 biomembranes but the approach may be applied more generally. In particular  we are interested in equations with non-smooth right hand sides and  operators which have non-trivial kernels.The theory for well posedness and approximation is presented 
 in an abstract setting. Several examples are described together with some numerical experiments.}
\end{abstract}

\maketitle

\section{Introduction}
We study  the well posedness and approximation of a generalised linear  saddle point problem in reflexive Banach spaces using three bilinear forms:  find $(u,w) \in X \times Y$ such that
\begin{equation}
\begin{split}
c(u,\eta) + b(\eta,w) &= \langle f, \eta \rangle \quad \forall \eta \in X, \\
b(u,\xi) - m(w,\xi) &= \langle g, \xi \rangle \quad \forall \xi \in Y.
\end{split}
\end{equation}

Our assumptions on the bilinear forms and spaces will be detailed in Section  \ref{AbsSplit}. First we give some context to this general problem within the existing literature. If we were to set $m=0$ the resulting saddle point problem is well studied, see for example \cite{ErnGue13}, and the assumptions we will make  on $b$ and $c$ are sufficient to show well posedness. The $m \neq 0$ case is examined in \cite{CiaHuaZou03, KelLiu96, BofBreFor13}. In these papers  well posedness is shown under a different set of assumptions to ours. In contrast to our assumptions  only one of the inf sup conditions is required for $b$ and $m$ has a weaker coercivity assumption but $c$ is assumed to be coercive. Indeed their assumptions are weaker than the ones used in this work for $b$ and $m$ but stronger for $c$. 

This system is motivated by splitting methods in which we turn a single high 
order partial differential equation into a coupled system of lower order equations. For example, consider the PDE
\begin{equation} \label{eq:introExamplePDE}
Au=f,
\end{equation}
where $A$ is a fourth order differential operator. Suppose we may write $A=B_1\circ B_2 +C$, where $B_1,B_2$ and $C$ are second order differential operators. By introducing a new variable, $w=B_2 u$, we may rewrite \eqref{eq:introExamplePDE} as a coupled system of equations
\begin{equation} \label{eq:introExampleSplitting}
\begin{split}
C u + B_1 w & = f, \\
B_2 u - w & = 0.
\end{split}
\end{equation}
The advantage of such a splitting method is that the resulting system of equations is second order, it can thus be solved numerically using simpler finite elements than are required to directly solve \eqref{eq:introExamplePDE}. Of course the meaning of the resulting split systems also depends on boundary conditions where needed.
Please note that all examples presented in the paper are actually set on closed surfaces.
To be an effective method  the system \eqref{eq:introExampleSplitting} must itself be well posed. This question is considered in \cite{CiaHuaZou03}, where sharp conditions are given detailing well posedness of the system. Amongst these conditions is a relationship between the norm of $B_1-B_2$ and other properties of the operators (see \cite[Section 3.1]{CiaHuaZou03}). When designing a splitting method it can be difficult to ensure that this condition holds. 

In this paper  we will take $B_1=B_2$. This case is studied in \cite{KelLiu96,Yan02}. These papers treat the case where $C$ induces a bilinear operator that is coercive or at least positive semi definite. We will not make this assumption here as it is not compatible with many of problems we wish to consider. To illustrate this point, consider the operator
\[
A = \Delta^2 u +\Delta u + u.
\]    
Such an $A$ induces a coercive bilinear form on $H^2(\Omega)\cap H_0^1(\Omega)$ where $\Omega$ is a bounded open set in $\mathbb R^2$ with smooth boundary or on $H^2(\Gamma)$ where  $\Gamma$ is a closed smooth  hypersurface in $\mathbb R^3$. These lead to  a problem of the form \eqref{eq:introExamplePDE} being well posed. However to perform a splitting which satisfies the conditions in \cite{KelLiu96,Yan02} we require a $B_1$ which induces a bilinear form satisfying an inf sup condition, equivalently $B_1$ is invertible in an appropriate sense, and a $C$ which induces a positive semi-definite bilinear form. A possible choice is $B_1 = B_2= -\Delta + \lambda$ for some $\lambda >0$ (with homogeneous Dirichlet boundary condition in the case that 
$\Omega \subset \mathbb R^2$ with a smooth boundary) but this produces
\[
C = A- B_1 \circ B_1 = (1+2\lambda) \Delta +(1-\lambda^2) 
\]     
which isn't positive semi definite for any $\lambda >0$. We will thus consider a situation where $C$ does not induce a positive semi-definite bilinear form. Note that this work is not a direct generalisation of the results in \cite{KelLiu96,Yan02}, whilst we consider a weaker condition on $C$ this is accommodated by a stronger condition on the operator which acts on $w$ in the second equation, chosen to be the negative identity map in \eqref{eq:introExampleSplitting}.

Our abstract setting and assumptions  are motivated by applications of this general theory to formulate a splitting method for surface PDE problems arising in models of biomembranes which
are posed over a sphere and a torus, \cite{EllFriHob17}. 
The complexity of the fourth order operator we wish to split, which results from the second variation of the Willmore functional, makes it difficult to formulate the splitting problem in such a way that  existing theory can be applied. Such a formulation may be possible but it is our belief that the method presented here is  straightforward to apply to this and similar problems. Moreover the additional assumptions we make on $b$ and $m$ are quite natural for the applications we consider.  See also \cite{EllFreMil89, EllGraHobKorWol16} for other possible applications to fourth order partial differential equations. 
\subsection*{Outline of paper}
In Section \ref{AbsSplit} we define an abstract saddle point  system consisting of two coupled variational equations in a Banach space setting using three bilinear forms  \{c,b,m\}. Well posedness is proved subject to Assumptions  \ref{def:genSplitSetupASSUME} and   \ref{def:liftedDiscreteSetup}.
An abstract finite element approximation is defined in Section 3. Natural error bounds are proved under approximation assumptions. Section \ref{SurfCalc} details some notation for surface calculus and surface finite elements. Section \ref{BilinearFormb} details results about a useful bilinear form $b(\cdot,\cdot)$ used in the examples of fourth order surface PDEs studied in later sections. Examples of two fourth order PDEs on closed surfaces satisfying the assumptions of Section \ref{AbsSplit} are given in Section \ref{PDEex} and the analysis of the  application of the surface finite element method to the saddle point problem is studied in Section \ref{PDEexDiscrete}. Finally a couple of numerical examples are given in Section \ref{NumExpts} which verify  the proved convergence rates.

\section{Abstract splitting problem}\label{AbsSplit}
We now introduce the coupled system on which the splitting method is based. Our abstract problem  is formulated in a Banach space setting. We will first define the spaces and functionals used and the required assumptions.
\begin{definition} \label{def:genSplitSetup} 
 Let $X,Y$ be reflexive Banach spaces and $L$ be a Hilbert space  with $Y \subset L$ continuously.  Let  \{c,b,m\} be  bilinear functionals  such that
\begin{align*}
&c:X\times X \rightarrow \mathbb{R} \text{, bounded and bilinear}, \\
&b:X\times Y \rightarrow \mathbb{R} \text{, bounded, bilinear},  \\
&m:L\times L \rightarrow \mathbb{R} \text{, bounded, bilinear, symmetric and coercive}.
\end{align*}
Let $f \in X^*$ and $g \in Y^*$.
\end{definition} 
Using this general setting we formulate the coupled problem. Note that we allow a non-zero right hand side in each equation, this is a generalisation of the motivating problem \eqref{eq:introExampleSplitting}.

\begin{problem} \label{prob:genSplittingProb}
With the spaces and functionals in Definition \ref{def:genSplitSetup}, find $(u,w) \in X \times Y$ such that
\begin{equation} \label{eq:genSplittingProb}
\begin{split}
c(u,\eta) + b(\eta,w) &= \langle f, \eta \rangle \quad \forall \eta \in X, \\
b(u,\xi) - m(w,\xi) &= \langle g, \xi \rangle \quad \forall \xi \in Y.
\end{split}
\end{equation}
\end{problem}
Throughout we assume the following inf sup and coercivity conditions on
the bilinear forms $b(\cdot,\cdot), c(\cdot,\cdot)$ and $m(\cdot,\cdot)$.
\begin{assumption} \label{def:genSplitSetupASSUME} 
~

\begin{itemize}
\item
There exist $\beta,\gamma >0$ such that 
\begin{equation} \label{eq:infSupsDefn}
\beta \|\eta\|_X \leq \sup_{\xi \in Y} \frac{b(\eta,\xi)}{\|\xi\|_Y} \;\; \forall \eta \in X \quad \text{and} \quad 
\gamma \|\xi\|_Y \leq \sup_{\eta \in X} \frac{b(\eta,\xi)}{\|\eta\|_X} \;\; \forall \xi \in Y.
\end{equation}
\item
There exists $C>0$ such that for all $(u,w) \in X \times Y$
\begin{equation} \label{eq:uwCoercivity}
b(u,\xi) = m(w,\xi) \; \forall \xi \in Y \implies C \|w\|_L^2 \leq c(u,u) + m(w,w).
\end{equation}

\end{itemize}

\end{assumption}

For existence we will make the additional assumption that the spaces $X$ and $Y$ can be approximated by sequences of finite dimensional spaces. 
Moreover we assume that such approximating spaces are sufficiently rich to satisfy an appropriate inf sup inequality. This assumption allows us to use a Galerkin approach.

 \begin{assumption}\label{def:liftedDiscreteSetup}
 We assume there exist sequences of finite dimensional approximating spaces $X_n \subset X$ and $Y_n \subset Y$. That is, for 
any $\eta \in X$ there exists a sequence $\eta_n \in X_n$ such that $\|\eta_n - \eta\|_X \rightarrow 0$, similarly for any $\xi \in Y$ there exists a sequence $\xi_n \in Y_n$ such that $\|\xi_n - \xi\|_Y \rightarrow 0$.

Moreover, we assume the discrete inf sup inequalities hold. That is there exist $\tilde{\beta},\tilde{\gamma}> 0$, independent of $n$, such that
\begin{align}
\tilde{\beta} \|\eta\|_X &\leq \sup_{\xi \in Y_n} \frac{b(\eta,\xi)}{\|\xi\|_Y} \;\; \forall \eta \in X_n, \quad \\ 
\tilde{\gamma} \|\xi\|_Y &\leq \sup_{\eta \in X_n} \frac{b(\eta,\xi)}{\|\eta\|_X} \;\; \forall \xi \in Y_n.
\end{align}
Finally, assume there exists a map $I_n:Y \rightarrow Y_n$ for each $n$, such that
\begin{equation} \label{eq:YTildeSupConv}
\begin{split}
&b(\eta_n, I_n \xi) = b(\eta_n, \xi) \quad \forall (\eta_n, \xi) \in X_n \times Y \\
&\sup_{\xi \in Y} \frac{\|\xi - I_n \xi\|_L}{\|\xi\|_Y} \rightarrow 0 \quad \text{ as } n \rightarrow \infty.
\end{split}
\end{equation}

 \end{assumption}    
We now show the well posedness of Problem \ref{prob:genSplittingProb}. First we prove two key lemmas in which we 
construct a discrete inverse operator and a discrete coercivity relation that is an analogue of  (\ref{eq:uwCoercivity}).  We  make use of a generalised form of the 
Lax-Milgram theorem, the Banach-Ne\v{c}as-Babu\v{s}ka Theorem \cite[Section 2.1.3]{ErnGue13}. For completeness, the theorem is stated below.

\begin{theorem}[Banach-Ne\v{c}as-Babu\v{s}ka] \label{thm:BNB}
Let $W$ be a Banach Space and let $V$ be a reflexive Banach space. Let $A \in \mathcal{L}(W \times V ; \mathbb{R})$ and $F \in V^*$. Then there exists a unique $u_F \in W$ such that 
\[
A(u_F,v) = F(v) \;\; \forall v \in V
\] 
if and only if 
\begin{align*}
&\exists \alpha > 0 \;\; \forall w \in W, \quad  \sup_{v \in V} \frac{ A(w,v) }{ \|v\|_V } \geq \alpha \|w\|_W, \\
&\forall v \in V, \quad \left( \forall w \in W, \; A(w,v)=0 \right) \implies v = 0.
\end{align*}
Moreover the following a priori estimate holds
\[
\forall F \in V^*, \quad \|u_F\|_W \leq \alpha^{-1}\|F\|_{V^*}.
\]
\end{theorem}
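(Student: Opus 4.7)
My plan is to recast the variational equation as an operator equation in $V^*$. Define the bounded linear operator $T \colon W \to V^*$ by $\langle Tw, v\rangle_{V^*,V} = A(w,v)$; continuity of $A$ gives $\|T\| \leq \|A\|$, and the variational problem $A(u_F, v) = F(v)$ for all $v \in V$ becomes simply $Tu_F = F$. Thus well-posedness for every $F \in V^*$ is equivalent to $T$ being a bijection $W \to V^*$, and the a priori bound comes down to an estimate of $\|T^{-1}\|$.

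For the implication from the two conditions to well-posedness, I would first observe that the inf-sup condition translates directly into the lower bound $\|Tw\|_{V^*} \geq \alpha \|w\|_W$ for every $w \in W$. This makes $T$ injective and forces its range $R(T)$ to be closed in $V^*$, since any Cauchy sequence $(Tw_n)$ in $R(T)$ pulls back to a Cauchy sequence $(w_n)$ in $W$ and continuity of $T$ allows passage to the limit. Next I would argue $R(T)$ is dense: if not, Hahn--Banach produces a nonzero $\Phi \in V^{**}$ vanishing on $R(T)$, and reflexivity of $V$ identifies $\Phi$ with some $v \in V \setminus \{0\}$ satisfying $A(w,v) = \langle Tw, v\rangle = 0$ for all $w \in W$, contradicting the non-degeneracy hypothesis. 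Hence $R(T) = V^*$, and the a priori estimate with constant $\alpha^{-1}$ follows immediately from the lower bound on $T$.

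For the converse, assume $Tu_F = F$ has a unique solution for every $F \in V^*$, so that $T$ is bijective. Since $W$ and $V^*$ are Banach, the open mapping theorem supplies a bounded inverse, and the bound $\|w\|_W \leq \|T^{-1}\|\,\|Tw\|_{V^*}$ is precisely the inf-sup inequality with $\alpha = \|T^{-1}\|^{-1}$. The non-degeneracy condition is then automatic: if $A(w,v) = 0$ for every $w \in W$, then surjectivity of $T$ makes $\langle \phi, v\rangle = 0$ for every $\phi \in V^*$, forcing $v = 0$.

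The principal subtlety is the density step, and it is the only place where reflexivity of $V$ is used. Without reflexivity, the Hahn--Banach functional lives in $V^{**}$ and need not correspond to any element of $V$, so the non-degeneracy hypothesis would not suffice to rule out a proper closed range. Everything else is standard functional-analytic bookkeeping once $T$ has been introduced.
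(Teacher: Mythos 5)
The paper does not prove this theorem: it is quoted verbatim from the cited reference \cite[Section 2.1.3]{ErnGue13} ``for completeness,'' so there is no internal proof to compare against. Your argument is correct and is essentially the standard operator-theoretic proof found in that reference: introduce $T:W\to V^*$ with $\langle Tw,v\rangle = A(w,v)$, note the inf-sup condition is exactly $\|Tw\|_{V^*}\geq \alpha\|w\|_W$ (giving injectivity, closed range, and the a priori constant $\alpha^{-1}$), use reflexivity of $V$ together with the non-degeneracy condition and Hahn--Banach to get density of the range, and obtain the converse from the open mapping theorem plus surjectivity of $T$. Your reading of the statement as well-posedness for every $F\in V^*$ (rather than a fixed $F$) is the intended one, and your remark that reflexivity is used precisely in the density step correctly identifies where the hypothesis enters.
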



\begin{lemma} \label{lem:liftedDiscreteCoercivity}
Under the Assumptions  \ref{def:genSplitSetup} and   \ref{def:liftedDiscreteSetup}, 
there exists a linear map $G_n:Y^* \rightarrow X_n$ such that for each $\Theta \in Y^*$ 
\[
b(G_n \Theta, \xi_n) = \langle \Theta , \xi_n \rangle \;\; \forall \xi_n \in Y_n.
\]
These maps satisfy the uniform bound
\[
\|G_n \Theta\|_X \leq \tilde{\beta}^{-1} \|\Theta\|_{Y^*}.
\]
Furthermore, there exists a map $G: Y^* \rightarrow X$ such that for each $\Theta \in Y^*$
\[
b(G \Theta, \xi) = \langle \Theta , \xi \rangle \;\; \forall \xi \in Y .
\]
This map satisfies the bound
\[
\|G \Theta\|_X \leq \beta^{-1} \|\Theta\|_{Y^*}.
\]
\end{lemma}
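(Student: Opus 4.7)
The plan is to apply the Banach--Ne\v{c}as--Babu\v{s}ka Theorem \ref{thm:BNB} twice: once at the discrete level (to obtain $G_n$) and once at the continuous level (to obtain $G$). In each case the bilinear form in the role of $A$ is $b$, and the two hypotheses of BNB correspond exactly to the two inf sup conditions available to us.

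For the discrete statement, fix $n$ and $\Theta \in Y^*$. Apply BNB with $W = X_n$, $V = Y_n$, the bilinear form $b$ restricted to $X_n \times Y_n$, and the functional $F = \Theta|_{Y_n} \in Y_n^*$. The first BNB hypothesis is the first discrete inf sup in Assumption \ref{def:liftedDiscreteSetup}, with constant $\tilde{\beta}$. For the second hypothesis, suppose $\xi_n \in Y_n$ satisfies $b(\eta_n,\xi_n)=0$ for all $\eta_n \in X_n$; the second discrete inf sup then forces $\tilde{\gamma}\|\xi_n\|_Y \leq 0$, so $\xi_n = 0$. Hence BNB produces a unique element, which we define to be $G_n \Theta \in X_n$, and the quantitative bound gives $\|G_n\Theta\|_X \leq \tilde{\beta}^{-1}\|\Theta|_{Y_n}\|_{Y_n^*} \leq \tilde{\beta}^{-1}\|\Theta\|_{Y^*}$. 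Linearity of $G_n$ follows from uniqueness: if $\Theta = \alpha\Theta_1 + \Theta_2$, then $\alpha G_n\Theta_1 + G_n\Theta_2$ satisfies the defining equation for $G_n\Theta$, so by uniqueness it equals $G_n\Theta$.

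For the continuous statement, apply BNB with $W = X$, $V = Y$, the bilinear form $b$, and $F = \Theta \in Y^*$. The first BNB hypothesis is the first inf sup in Assumption \ref{def:genSplitSetupASSUME} with constant $\beta$; the second hypothesis follows as above from the second (continuous) inf sup in Assumption \ref{def:genSplitSetupASSUME}. BNB then produces a unique $G\Theta \in X$ with $\|G\Theta\|_X \leq \beta^{-1}\|\Theta\|_{Y^*}$, and linearity of $G$ again follows from uniqueness.

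There is no real obstacle here; the argument is essentially bookkeeping once one notices that the pair of inf sup conditions in each of Assumptions \ref{def:genSplitSetupASSUME} and \ref{def:liftedDiscreteSetup} is exactly what BNB needs. The only small point worth highlighting is that BNB, as stated, does not include linearity of the solution map, so we have to derive it separately from the uniqueness clause. Note also that in the discrete setting we do not use the second BNB condition as a dimension argument; we genuinely need the second discrete inf sup in Assumption \ref{def:liftedDiscreteSetup}, since $X_n$ and $Y_n$ are not assumed to have the same dimension.
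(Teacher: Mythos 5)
Your proposal is correct and follows essentially the same route as the paper: both apply the Banach--Ne\v{c}as--Babu\v{s}ka theorem with the discrete inf sup pair from Assumption \ref{def:liftedDiscreteSetup} to build $G_n$ (noting $\Theta\in(Y_n,\|\cdot\|_Y)^*$) and with the continuous pair from Assumption \ref{def:genSplitSetupASSUME} to build $G$, with linearity coming from uniqueness. The paper's proof is just a terser version of the same bookkeeping.
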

\begin{proof}
To construct $G_n$, let $\Theta \in Y^*$, then $\Theta \in (Y_n, \|\cdot\|_Y)^*$. Then by Theorem \ref{thm:BNB}, there exists a unique $G_n \Theta \in X_n$ such that 
\[
b(G_n \Theta, \xi_n) = \langle \Theta , \xi_n \rangle \;\; \forall \xi_n \in Y_n.
\]
The assumptions required to apply Theorem \ref{thm:BNB} are made in Assumption \ref{def:liftedDiscreteSetup}. That $G_n$ is linear follows immediately from the construction. The two bounds are a consequence of the discrete inf sup inequalities in Assumption \ref{def:liftedDiscreteSetup}. The map $G$ is constructed similarly using the assumptions made in Assumption \ref{def:genSplitSetupASSUME}.
\end{proof}

We can now prove a discrete coercivity relation which is key in proving well posedness for Problem \ref{prob:genSplittingProb}. This is a discrete analogue of \eqref{eq:uwCoercivity}.  

\begin{lemma}
Under the assumptions in Lemma \ref{lem:liftedDiscreteCoercivity}, there exists $C,N>0$ such that, for all $n\geq N$,
\begin{equation} \label{eq:liftedDiscreteCoercivity}
C\|v_n\|_L^2 \leq c(G_n m(v_n,\cdot),G_n m(v_n,\cdot)) + m(v_n,v_n) \;\; \forall v_n \in Y_n.
\end{equation}
Here $m(v_n,\cdot) \in Y^*$ denotes the map $y \mapsto m(v_n,y)$.
\end{lemma}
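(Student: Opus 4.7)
The plan is to argue by contradiction, using the Fortin-type condition \eqref{eq:YTildeSupConv} to extract a compactness property of the embedding $Y\hookrightarrow L$ that converts weak subsequential limits into strong ones. Suppose the conclusion fails: then there exist $n_k\to\infty$ and $v_k\in Y_{n_k}$ with $\|v_k\|_L=1$ and $c(u_k,u_k)+m(v_k,v_k)\to 0$, where $u_k:=G_{n_k}m(v_k,\cdot)\in X_{n_k}$. By Lemma \ref{lem:liftedDiscreteCoercivity} together with the continuous embedding $Y\hookrightarrow L$, $\|u_k\|_X$ is bounded, so after extracting subsequences one obtains weak limits $u_k\rightharpoonup u$ in $X$ and $v_k\rightharpoonup v$ in $L$ by reflexivity.

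The critical observation is that \eqref{eq:YTildeSupConv} says exactly $\|\iota-I_n\|_{\mathcal{L}(Y,L)}\to 0$, where $\iota$ denotes the inclusion $Y\hookrightarrow L$. Each $I_n$ has image in the finite-dimensional space $Y_n$, hence is compact as a map $Y\to L$; a uniform operator-norm limit of compact operators is compact, so $\iota$ is itself compact. By Schauder's theorem the restriction $\iota^*:L^*\to Y^*$ is compact, and composing with the continuous isomorphism $M:L\to L^*$ induced by $m$ shows $v\mapsto m(v,\cdot)|_Y$ is compact from $L$ into $Y^*$. Composing further with the continuous $G:Y^*\to X$ furnished by Lemma \ref{lem:liftedDiscreteCoercivity}, the map $v\mapsto Gm(v,\cdot)$ is compact from $L$ to $X$. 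Writing $\tilde u_k:=Gm(v_k,\cdot)$, this compactness yields $\tilde u_k\to\tilde u:=Gm(v,\cdot)$ strongly in $X$.

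Since $b(u_k,\xi_n)=m(v_k,\xi_n)=b(\tilde u_k,\xi_n)$ for every $\xi_n\in Y_{n_k}$, the element $u_k$ is the $X_{n_k}$-Galerkin approximation of $\tilde u_k$, and the standard Banach--Ne\v{c}as--Babu\v{s}ka quasi-optimality bound, using the discrete inf-sup in Assumption \ref{def:liftedDiscreteSetup}, gives
\[
\|\tilde u_k-u_k\|_X\leq\left(1+\frac{\|b\|}{\tilde\beta}\right)\inf_{\eta\in X_{n_k}}\|\tilde u_k-\eta\|_X.
\]
Choosing $\eta_k\in X_{n_k}$ with $\eta_k\to\tilde u$ (density of $X_n$ in $X$ from Assumption \ref{def:liftedDiscreteSetup}) and using the strong convergence $\tilde u_k\to\tilde u$, the right-hand side tends to zero, so $u_k\to\tilde u$ strongly in $X$ as well.

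Applying the continuous coercivity \eqref{eq:uwCoercivity} to the admissible pair $(\tilde u_k,v_k)\in X\times Y$, which satisfies $b(\tilde u_k,\xi)=m(v_k,\xi)$ for all $\xi\in Y$ by construction of $G$, yields a fixed $C_0>0$ with $C_0=C_0\|v_k\|_L^2\leq c(\tilde u_k,\tilde u_k)+m(v_k,v_k)$. The strong convergences $u_k,\tilde u_k\to\tilde u$ and boundedness of $c$ force $c(\tilde u_k,\tilde u_k)-c(u_k,u_k)\to 0$, so the right-hand side has the same limit as $c(u_k,u_k)+m(v_k,v_k)\to 0$, contradicting $C_0>0$. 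The main obstacle is recognising that \eqref{eq:YTildeSupConv} is secretly a compactness statement about $Y\hookrightarrow L$; once that is in hand the remainder is a routine weak-limit plus quasi-optimality argument.
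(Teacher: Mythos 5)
Your proposal is correct, but it follows a genuinely different route from the paper. The paper argues directly and quantitatively: for a fixed $v_n\in Y_n$ it uses the identity $b(\eta_n,I_n\xi)=b(\eta_n,\xi)$ together with the continuous inf--sup condition to show $\|(G-G_n)m(v_n,\cdot)\|_X\leq C\|v_n\|_L\,\sup_{\xi\in Y}\|\xi-I_n\xi\|_L/\|\xi\|_Y$, and then perturbs the continuous coercivity \eqref{eq:uwCoercivity} applied to the admissible pair $(Gm(v_n,\cdot),v_n)$, absorbing the $O(\varepsilon_n)\|v_n\|_L^2$ error into the left-hand side once $n$ is large; this yields an explicit threshold $N$ and an explicit constant (essentially half the continuous coercivity constant). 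You instead argue by contradiction, reading the second condition in \eqref{eq:YTildeSupConv} as the statement that the inclusion $Y\hookrightarrow L$ is compact (a norm limit of bounded finite-rank maps), pushing this through Schauder's theorem and the boundedness of $G$ from Lemma \ref{lem:liftedDiscreteCoercivity} to get complete continuity of $v\mapsto Gm(v,\cdot)$, and then replacing the paper's explicit $G$--$G_n$ comparison by a Galerkin quasi-optimality estimate built from the discrete inf--sup and the density of the $X_n$; the final contradiction again comes from applying \eqref{eq:uwCoercivity} to $(Gm(v_k,\cdot),v_k)$. Your argument is sound (the only slightly delicate point is that you treat $I_n$ as a bounded linear operator, which Assumption \ref{def:liftedDiscreteSetup} does not state explicitly, but linearity and boundedness follow from the defining identity together with the discrete inf--sup, or one can verify total boundedness of the image of the unit ball directly, so this is not a gap). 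What the two approaches buy is different: yours never uses the orthogonality property $b(\eta_n,I_n\xi)=b(\eta_n,\xi)$, only the uniform $L$-approximation, and it exposes \eqref{eq:YTildeSupConv} as a compactness hypothesis; the paper's proof is more elementary, avoids compactness and contradiction, and gives quantitative control on both $N$ and the coercivity constant in \eqref{eq:liftedDiscreteCoercivity}, which is in the spirit of the subsequent error analysis.
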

\begin{proof}
Let $v_n \in Y_n$, $m(v_n, \cdot) \in Y^*$ holds as $Y$ is continuously embedded into $L$ and observe
\[
\|m(v_n,\cdot)\|_{Y^*} = \sup_{y \in Y}\frac{|m(v_n,y)|}{\|y\|_Y} \leq C \frac{\|v_n\|_L \|y\|_L }{\|y\|_Y} \leq C \|v_n\|_L.
\]
It follows
\begin{align*}
b((G-G_n)m(v_n,\cdot),\xi) &= b((G-G_n)m(v_n,\cdot),\xi - I_n \xi) \\
&= b(G m(v_n,\cdot), \xi - I_n \xi)
\\
&= m(v_n, \xi - I_n \xi)
\\
& \leq C\|v_n \|_L \|\xi - I_n \xi\|_L.
\end{align*}
Using the inf sup inequalities given in \eqref{eq:infSupsDefn} we deduce
\[
\|(G-G_n)m(v_n,\cdot)\|_X \leq C \|v_n\|_L \sup_{\xi \in Y} \frac{\|\xi - I_n \xi\|_L}{\|\xi\|_Y}.
\]
For any $v_n \in Y_n$ we can thus bound the difference
\[
|c(G_n m(v_n,\cdot),G_n m(v_n,\cdot)) - c(G m(v_n,\cdot),G m(v_n,\cdot))| \leq C\|v_n\|_L^2 \sup_{\xi \in Y} \frac{\|\xi - I_n \xi\|_L}{\|\xi\|_Y}.
\]
Now, choosing $n$ sufficiently large in the bound above, by \eqref{eq:uwCoercivity} and \eqref{eq:YTildeSupConv} it follows for any $v_n \in Y_n$
\begin{align*}
C\|v_n\|_L^2 &\leq c(G_n m(v_n,\cdot),G_n m(v_n,\cdot)) + m(v_n,v_n) + \frac{C}{2}\|v_n\|_L^2,  
\end{align*}
from which the result is immediate.
\end{proof}
\begin{theorem} \label{thm:genSplittingWellPosed}
Suppose the Assumptions  \ref{def:genSplitSetup} and  \ref{def:liftedDiscreteSetup} hold, then there exists a unique solution to Problem \ref{prob:genSplittingProb}. Moreover, there exists $C>0$, independent of the data, such that
\[
\|u\|_X + \|w\|_Y \leq C(\|f\|_{X^*} + \|g\|_{Y^*}).
\]
\end{theorem}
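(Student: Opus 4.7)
\noindent\emph{Proof proposal.} My strategy is a Galerkin procedure on the approximating spaces $X_n,Y_n$ of Assumption \ref{def:liftedDiscreteSetup}, exploiting the discrete coercivity relation \eqref{eq:liftedDiscreteCoercivity}, followed by a passage to the weak limit. The first step is to consider the Galerkin problem: for each $n$, find $(u_n,w_n)\in X_n\times Y_n$ satisfying the two equations of \eqref{eq:genSplittingProb} for all test functions in $X_n\times Y_n$. As a square linear system on a finite-dimensional space, existence is equivalent to uniqueness of the corresponding homogeneous system.

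For the homogeneous system ($f=g=0$), the second discrete equation gives $b(u_n,\xi_n)=m(w_n,\xi_n)$ for all $\xi_n\in Y_n$, and the uniqueness built into Lemma \ref{lem:liftedDiscreteCoercivity} forces $u_n=G_n m(w_n,\cdot)$. Testing the first equation with $\eta_n=u_n$ and using $b(u_n,w_n)=m(w_n,w_n)$ produces $c(u_n,u_n)+m(w_n,w_n)=0$. For $n\geq N$ the discrete coercivity \eqref{eq:liftedDiscreteCoercivity} then forces $\|w_n\|_L=0$; by the continuous embedding $Y\subset L$ this yields $w_n=0$ in $Y$, whence also $u_n=G_n m(0,\cdot)=0$. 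So the Galerkin problem is uniquely, hence always, solvable for $n\geq N$.

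The core work is a uniform a priori bound. I would introduce $\tilde u_n:=G_n m(w_n,\cdot)$, so that the second equation rewrites as $u_n=\tilde u_n+G_n g$, with $\|\tilde u_n\|_X\leq C\|w_n\|_L$ and $\|G_n g\|_X\leq\tilde\beta^{-1}\|g\|_{Y^*}$ from Lemma \ref{lem:liftedDiscreteCoercivity}. Testing the Galerkin equations with $\eta_n=u_n$, $\xi_n=w_n$ and subtracting yields the energy identity
\begin{equation*}
c(u_n,u_n)+m(w_n,w_n)=\langle f,u_n\rangle-\langle g,w_n\rangle.
\end{equation*}
Expanding $c(u_n,u_n)$ by bilinearity as $c(\tilde u_n,\tilde u_n)$ plus cross and quadratic terms in $G_n g$ and applying \eqref{eq:liftedDiscreteCoercivity} to $\tilde u_n$ bounds $C\|w_n\|_L^2$ by the right-hand side together with controlled corrections. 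The $\|w_n\|_Y$ factor that appears through $\langle g,w_n\rangle$ is brought under control by the discrete inf sup for $b$ on $Y_n$ applied to the first equation, giving $\tilde\gamma\|w_n\|_Y\leq\|f\|_{X^*}+\|c\|\,\|u_n\|_X$. The main obstacle is closing this estimate: a careful Young inequality must be applied so that every $\|w_n\|_L^2$ and $\|w_n\|_Y^2$ term on the right is absorbed into the left, leaving, uniformly in $n$,
\begin{equation*}
\|u_n\|_X+\|w_n\|_Y\leq C(\|f\|_{X^*}+\|g\|_{Y^*}).
\end{equation*}
This is the only place where the lack of coercivity of $c$ really bites, and it is essential that $G_n g$ vanishes in the homogeneous case.

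Finally, reflexivity of $X$ and $Y$ supplies a subsequence, not relabelled, and limits $(u,w)\in X\times Y$ with $u_n\rightharpoonup u$ in $X$ and $w_n\rightharpoonup w$ in $Y$. For arbitrary $\eta\in X$ and $\xi\in Y$ I would choose approximants $\eta_n\in X_n$, $\xi_n\in Y_n$ by Assumption \ref{def:liftedDiscreteSetup} and pass to the limit in the Galerkin equations via the weak-strong continuity of bounded bilinear forms, producing a solution of Problem \ref{prob:genSplittingProb}. The a priori bound descends by weak lower semicontinuity of norms. Continuous uniqueness follows by repeating the discrete homogeneous argument at the continuous level, with \eqref{eq:uwCoercivity} and the continuous inf sup inequalities \eqref{eq:infSupsDefn} replacing their discrete counterparts.
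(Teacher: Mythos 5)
Your proposal is correct and follows essentially the same route as the paper: a Galerkin scheme on $X_n\times Y_n$, homogeneous uniqueness and the a priori bound via the discrete inf sup inequalities together with the discrete coercivity \eqref{eq:liftedDiscreteCoercivity} applied to $G_n m(w_n,\cdot)$ (with the $G_n g$ correction handled by bilinearity and Young's inequality), then weak limits by reflexivity and continuous uniqueness via \eqref{eq:uwCoercivity}. The only cosmetic difference is that you bound $\|u_n\|_X$ through the operator bounds for $G_n$ rather than quoting the discrete inf sup on the second equation directly, which is the same estimate in substance.
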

\begin{proof}
We begin with existence, using a Galerkin argument. Let $(u_n,w_n) \in X_n \times Y_n$ be the unique solution of
\begin{align*}
c(u_n,\eta_n) + b(\eta_n,w_n) &= \langle f, \eta_n \rangle \quad \forall \eta_n \in X_n, \\
b(u_n,\xi_n) - m(w_n,\xi_n) &= \langle g,\xi_n \rangle \quad \forall \xi_n \in Y_n.
\end{align*}
As the problem is linear and finite dimensional, existence and uniqueness of such a solution is equivalent to uniqueness for the 
homogeneous problem $f=g=0$. In this case, testing the first equation with $u_n$, the second with $w_n$ and subtracting we obtain
\[
c(u_n,u_n) + m(w_n,w_n) = 0.
\] 
For sufficiently large $n$ this implies $w_n=0$ by \eqref{eq:liftedDiscreteCoercivity}, as $u_n = G_n m(w_n,\cdot)$ in the homogeneous 
case, thus $u_n=0$ also due to the linearity of $G_n$.

Now we return to the inhomogeneous case and produce a priori bounds on $u_n,w_n$. To create a pair of initial bounds 
we use the discrete inf sup inequalities with each of the finite dimensional equations. Firstly,
\begin{align*}
\tilde{\gamma}\|w_n\|_Y \leq \sup_{\eta_n \in X_n} \frac{b(\eta_n,w_n)}{\|\eta_n\|_X} \leq \|f\|_{X^*} + C \|u_n\|_X.
\end{align*}
Similarly with the second equation,
\begin{align*}
\tilde{\beta}\|u_n\|_X \leq \sup_{\xi_n \in Y_n} \frac{b(u_n,\xi_n)}{\|\xi_n\|_Y} \leq \|g\|_{Y^*} + C\|w_n\|_L.
\end{align*}
Combining these two inequalities produces
\begin{equation} \label{eq:liftAPwithL2}
\|u_n\|_X + \|w_n\|_Y \leq C(\|f\|_{X^*} + \|g\|_{Y^*} + \|w_n\|_L).
\end{equation}
To bound the $\|w_n\|_L$ term we use the same approach of subtracting the equations as used to 
show uniqueness. In the inhomogeneous case this produces
\[
c(u_n,u_n) + m(w_n,w_n) = \langle f, u_n \rangle - \langle g, w_n \rangle. 
\]
Notice now $u_n=G_n m(w_n,\cdot) + G_n g$, and thus (\ref{eq:liftedDiscreteCoercivity}) yields
\begin{align*}
C\|w_n\|_L^2 &\leq c(u_n,u_n) + m(w_n,w_n) -c(u_n,G_n g) -c(G_n g,u_n) +c(G_n g,G_n g) \\
& \leq \|f\|_{X^*}\|u_n\|_X + \|g\|_{Y^*}\|w_n\|_Y + C(\|u_n\|_X+ \|G_ng\|_X)\|G_ng\|_X. 
\end{align*}
Recall, by Lemma \ref{lem:liftedDiscreteCoercivity},
\[
\|G_n g\|_X \leq \tilde{\beta}^{-1}\|g\|_{Y^*}.
\] 
Combining these two inequalities with \eqref{eq:liftAPwithL2} produces
\[
\|w_n\|_L^2 \leq C(\|f\|_{X^*} + \|g\|_{Y^*})(\|f\|_{X^*} + \|g\|_{Y^*} + \|w_n\|_L).
\]
Hence by Young's inequality we deduce
\[
\|w_n\|_L \leq C(\|f\|_{X^*} + \|g\|_{Y^*}),
\]
then inserting this bound into \eqref{eq:liftAPwithL2} produces
\[
\|u_n\|_X + \|w_n\|_Y \leq C(\|f\|_{X^*} + \|g\|_{Y^*} ).
\]
Thus $u_n$ and $w_n$ are bounded sequences in $X$ and $Y$ respectively, which are both reflexive Banach spaces, 
hence there exists a subsequence (which we continue to denote with a subscript $n$) such that 
\[
u_n \xrightharpoonup{\;\;X\;\;} u \quad \text{ and } \quad w_n \xrightharpoonup{\;\;Y\;\;} w,
\]  
for some weak limits $u \in X$ and $w \in Y$. We will show that this weak limit is a solution to Problem \ref{prob:genSplittingProb}. 
For any $\eta \in X$, there exists an approximating sequence $\eta_n \rightarrow \eta$ with each $\eta_n \in X_n$, it follows
\[
c(u,\eta) + b(\eta,w) = \lim_{n \rightarrow \infty} \left( c(u_n,\eta_n) + b(\eta_n,w_n) \right) = \lim_{n \rightarrow \infty } \langle f , \eta_n \rangle = \langle f, \eta \rangle. 
\]
We treat the second equation similarly, for any $\xi \in Y$ we may find a sequence $\xi_n \rightarrow \xi$ with each $\xi_n \in Y_n$ and
\[
b(u,\xi) - m(w,\xi) = \lim_{n \rightarrow \infty} \left( b(u_n,\xi_n) - m(w_n, \xi_n) \right) = \lim_{n \rightarrow \infty } \langle g , \xi_n \rangle = \langle g, \xi \rangle.
\]
Thus $(u,w)$ does indeed solve Problem \ref{prob:genSplittingProb}. Moreover, as $u,w$ are the weak limits of bounded sequences in reflexive 
Banach spaces they satisfy the same upper bound, that is
\[
\|u\|_X + \|w\|_Y \leq C(\|f\|_{X^*} + \|g\|_{Y^*} ).
\]
We complete the proof by proving uniqueness, as the system is linear it is sufficient to consider the homogeneous case $f=g=0$. In such a case $b(u,\xi) = m(w,\xi) \; \forall \xi \in Y$ and 
\[
c(u,u) + m(w,w) = 0.
\]
Then by \eqref{eq:uwCoercivity} we have $w=0$ and hence $u=0$.
\end{proof}

\section{Abstract finite element method}
In this section we formulate and analyse an  abstract finite element method to approximate the solution of Problem \ref{prob:genSplittingProb}. In our applications we wish to use a non-conforming finite element method  in the sense of using finite element spaces which are not subspaces of the function spaces. For example, we will approximate problems based on a surface $\Gamma$ via problems based on a discrete surface $\Gamma_h$. 

\begin{definition} \label{def:genFemSetUp}
Suppose, for $h>0$, $X_h$, $Y_h$ are finite dimensional normed vector spaces and there exist lift operators
\begin{align*}
l_h^X:X_h \rightarrow X \quad \text{ and }\quad l_h^Y:Y_h \rightarrow Y,
\end{align*}   
which are linear and injective, such that $X_h^l := l_h^X(X_h)$ and $Y_h^l := l_h^Y(Y_h)$ satisfy Assumption \ref{def:liftedDiscreteSetup}.
For $\eta_h \in X_h$ let $\eta_h^l := l_h^X(\eta_h) \in X_h^l$, similarly for $\xi_h \in Y_h$ let $\xi_h^l := l_h^Y(\xi_h) \in Y_h^l$. 

Let $c_h$, $b_h$, $m_h$ denote bilinear functionals such that
\begin{align*}
&c_h:X_h\times X_h \rightarrow \mathbb{R} \text{, bilinear}, \\
&b_h:X_h\times Y_h \rightarrow \mathbb{R} \text{, bilinear,}  \\
&m_h:Y_h\times Y_h \rightarrow \mathbb{R} \text{, bilinear and symmetric.}
\end{align*}
We will assume the following approximation properties, there exists $C>0$ and $k \in \mathbb{N}$ such that 
\begin{align*}
|c(\eta_h^l,\xi_h^l) - c_h(\eta_h,\xi_h)| &\leq Ch^k \|\eta_h^l\|_X \|\xi_h^l\|_X \quad \forall (\eta_h, \xi_h) \in X_h \times X_h, \\
|b(\eta_h^l,\xi_h^l) - b_h(\eta_h,\xi_h)| &\leq Ch^k \|\eta_h^l\|_X \|\xi_h^l\|_Y \quad \forall (\eta_h, \xi_h) \in X_h \times Y_h, \\
|m(\eta_h^l,\xi_h^l) - m_h(\eta_h,\xi_h)| &\leq Ch^k \|\eta_h^l\|_L \|\xi_h^l\|_L \quad \forall (\eta_h, \xi_h) \in Y_h \times Y_h.
\end{align*}
Finally, let $f_h \in X_h^*$ and $g_h \in Y_h^*$, where $X_h^*$ and $Y_h^*$ are the dual spaces of $X_h$ and $Y_h$ respectively, be such that
\begin{align*}
|\langle f,\eta_h^l \rangle- \langle f_h,\eta_h \rangle| & \leq Ch^k \|f\|_{X^*} \|\eta_h^l\|_X \quad \forall \eta_h \in X_h, \\
|\langle g, \xi_h^l\rangle - \langle g_h, \xi_h \rangle| & \leq Ch^k \|g\|_{Y^*}\|\xi_h^l\|_Y \quad \forall \xi_h \in Y_h.
\end{align*}
\end{definition}

The finite element approximation can now be formulated.
\begin{problem} \label{prob:genSplittingFem}
Under the assumptions of Definition \ref{def:genFemSetUp}, find $(u_h,w_h) \in X_h \times Y_h$ solving the discretised problem
\begin{align*}
c_h(u_h,\eta_h) + b_h(\eta_h,w_h) &= \langle f_h, \eta_h \rangle \quad \forall \eta_h \in X_h, \\
b_h(u_h,\xi_h) - m_h(w_h,\xi_h) &= \langle g_h, \xi_h \rangle \quad \forall \xi_h \in Y_h.
\end{align*}
\end{problem}

We now prove well posedness for the finite element method, Problem \ref{prob:genSplittingFem}, and produce a priori bounds for the solution.

\begin{theorem} \label{thm:genFemInfBound}
For sufficiently small $h$, there exists a unique solution to Problem \ref{prob:genSplittingFem}. Moreover, there exists a constant $C>0$, independent of $h$, such that
\[
\|u-u_h^l\|_X + \|w-w_h^l\|_Y \leq C \inf_{(\eta_h, \xi_h) \in X_h \times Y_h} \left( \|u-\eta_h^l\|_X + \|w-\xi_h^l\|_Y \right) + h^k(\|f\|_{X^*} + \|g\|_{Y^*} ).
\]  
\end{theorem}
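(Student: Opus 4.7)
The plan is to split the argument into two parts: well-posedness of Problem \ref{prob:genSplittingFem}, followed by the quasi-optimal error bound via a Strang-type argument. The overall strategy is to port the machinery of Section \ref{AbsSplit} to the fully discrete setting, treating the discrepancy between the lifted forms $(c, b, m)$ restricted to $(X_h^l, Y_h^l)$ and the genuinely discrete forms $(c_h, b_h, m_h)$ on $(X_h, Y_h)$ as an $O(h^k)$ perturbation via the consistency estimates in Definition \ref{def:genFemSetUp}.

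For well-posedness, the key observation is that Definition \ref{def:genFemSetUp} requires $X_h^l, Y_h^l$ to satisfy Assumption \ref{def:liftedDiscreteSetup}, so Lemma \ref{lem:liftedDiscreteCoercivity} and the discrete coercivity relation \eqref{eq:liftedDiscreteCoercivity} already hold for $(c, b, m)$ on the lifted spaces with $h$-independent constants. Perturbing by the consistency estimates, I would deduce, for $h$ sufficiently small, the analogous statements for $(c_h, b_h, m_h)$ directly on $(X_h, Y_h)$: $b_h$ satisfies inf-sup with constants of order $\tilde\beta/2, \tilde\gamma/2$; there is a discrete inverse $G_h : Y_h^* \rightarrow X_h$ defined by $b_h(G_h\Theta, \xi_h) = \langle \Theta, \xi_h \rangle$ with $\|G_h\Theta\|_X \le C \|\Theta\|_{Y_h^*}$ uniformly in $h$; and
\[
C\|v_h\|_L^2 \le c_h(G_h m_h(v_h, \cdot), G_h m_h(v_h, \cdot)) + m_h(v_h, v_h) \quad \forall v_h \in Y_h
\]
holds with an $h$-independent constant. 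Since Problem \ref{prob:genSplittingFem} is a square linear system in finite dimensions, existence reduces to uniqueness for the homogeneous problem, which follows by the subtract-and-test argument from Theorem \ref{thm:genSplittingWellPosed}: one obtains $c_h(u_h, u_h) + m_h(w_h, w_h) = 0$, the discrete coercivity forces $w_h = 0$ (using $u_h = G_h m_h(w_h, \cdot)$), and linearity of $G_h$ then gives $u_h = 0$.

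For the error estimate, I fix arbitrary $(\eta_h, \xi_h) \in X_h \times Y_h$ and set $e_u := u_h - \eta_h \in X_h$, $e_w := w_h - \xi_h \in Y_h$. Combining Problem \ref{prob:genSplittingFem} with Problem \ref{prob:genSplittingProb} tested against the lifts $\zeta_h^l \in X_h^l, \chi_h^l \in Y_h^l$, and inserting the consistency estimates, I obtain perturbed error equations of the form
\[
c_h(e_u, \zeta_h) + b_h(\zeta_h, e_w) = c(u - \eta_h^l, \zeta_h^l) + b(\zeta_h^l, w - \xi_h^l) + R_1(\zeta_h),
\]
together with an analogous identity for $b_h(e_u, \chi_h) - m_h(e_w, \chi_h)$. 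After bounding $\|\eta_h^l\|_X \le \|u\|_X + \|u - \eta_h^l\|_X$ (and similarly for $\xi_h^l$) and applying the a priori estimate of Theorem \ref{thm:genSplittingWellPosed}, the residuals satisfy $|R_i(\zeta_h)| \le Ch^k(\|f\|_{X^*} + \|g\|_{Y^*}) \|\zeta_h^l\|_X$. Feeding these into the discrete $b_h$ inf-sup, as in the derivation of \eqref{eq:liftAPwithL2}, yields
\[
\|e_u\|_X + \|e_w\|_Y \le C(\|u - \eta_h^l\|_X + \|w - \xi_h^l\|_Y) + C \|e_w\|_L + Ch^k(\|f\|_{X^*} + \|g\|_{Y^*}).
\]
The $L$-norm of $e_w$ is then controlled by substituting the relation $e_u = G_h m_h(e_w, \cdot) + G_h(\text{residuals})$ into the discrete coercivity and closing with Young's inequality, mirroring the final step of the proof of Theorem \ref{thm:genSplittingWellPosed}. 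A triangle inequality converts the resulting bounds into bounds on $u - u_h^l$ and $w - w_h^l$, and taking the infimum over $(\eta_h, \xi_h)$ gives the claim.

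The main obstacle, as in Theorem \ref{thm:genSplittingWellPosed}, is the $\|e_w\|_L$ step. One must carefully track how each consistency error splits between the ``best approximation'' contribution on the right-hand side and the $h^k(\|f\|_{X^*} + \|g\|_{Y^*})$ term: the former arising from the $\|u - \eta_h^l\|_X, \|w - \xi_h^l\|_Y$ pieces of $\|\eta_h^l\|_X, \|\xi_h^l\|_Y$, the latter from the $\|u\|_X, \|w\|_Y$ pieces absorbed via the continuous a priori bound. Provided $h$ is small enough that the small $h^k$ prefactor multiplying the best-approximation term can be absorbed into $C$, the remainder collects cleanly into the stated form.
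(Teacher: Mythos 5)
Your proposal is correct and follows essentially the same route as the paper: existence is reduced to homogeneous uniqueness via the subtract-and-test identity together with the discrete coercivity relation, the quasi-optimal bound is obtained from the discrete inf-sup conditions up to an $\|w_h^l-\xi_h^l\|_L$ term, and that term is closed using the coercivity relation \eqref{eq:liftedDiscreteCoercivity}, the $G$-operator of Lemma \ref{lem:liftedDiscreteCoercivity} and Young's inequality before taking the infimum. The only differences are organizational: you front-load uniform discrete analogues (a perturbed inf-sup for $b_h$, a $b_h$-based operator $G_h$, and a coercivity relation for $(c_h,m_h)$) and anchor the consistency errors at $(\eta_h,\xi_h)$, whereas the paper works throughout with the lifted forms and $G_h^l$, inserts the $c_h$, $b_h$, $m_h$ discrepancies inline, and bootstraps the discrete a priori bound \eqref{a_priori_estimate} by taking $\eta_h=\xi_h=0$.
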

\begin{proof}
For existence and uniqueness it is sufficient to prove existence for the homogeneous case  $f_h=g_h=0$ as the system is linear and finite dimensional. In the homogeneous case we see
\[
c_h(u_h,u_h) + m_h(w_h,w_h) = 0. 
\]
We will denote by $G_h^l:Y^* \rightarrow X_h^l$ the map constructed in Lemma \ref{lem:liftedDiscreteCoercivity} and also define $G_h:Y^* \rightarrow X_h$ by $G_h:= (l_h^X)^{-1}\circ G_h^l$. Notice also,
\begin{align*}
\tilde{\beta}\|u_h^l - G_h^l m(w_h^l,\cdot)\|_X &\leq \sup_{\xi_h \in Y_h} \frac{b(u_h^l-G_h^l m(w_h^l,\cdot), \xi_h^l)}{\|\xi_h^l\|_Y} \\
& \leq \sup_{\xi_h \in Y_h} \frac{ b(u_h^l,\xi_h^l) - b_h(u_h,\xi_h) + m_h(w_h,\xi_h) - m(w_h^l,\xi_h^l) }{\|\xi_h^l\|_Y} \\
& \leq Ch^k \|w_h^l\|_L. 
\end{align*}
The final line holds as $\|u_h^l\|_X \leq C \|w_h^l\|_L$ in the homogeneous case, using the second equation of the system. It follows, by \eqref{eq:liftedDiscreteCoercivity},
\begin{align*}
C\|w_h^l\|_L^2 &\leq c(G_h^l m(w_h^l,\cdot), G_h^l m(w_h^l,\cdot) ) + m(w_h^l,w_h^l) \\
& = c(u_h^l, u_h^l ) + m(w_h^l,w_h^l) - c_h(u_h,u_h) - m_h(w_h,w_h) \\
& \quad + c(G_h^l m(w_h^l,\cdot), G_h^l m(w_h^l,\cdot) ) - c(u_h^l,u_h^l) \\
& \leq \tilde{C}h^k \|w_h^l\|_L^2.
\end{align*}
Hence for $h$ sufficiently small $w_h^l = 0$ from which we deduce $u_h^l=0$ and hence $w_h=u_h=0$. Thus there exists a unique solution for sufficiently small $h$.
Now we prove the required error estimate. Let $\eta_h \in X_h$ and $\xi_h \in Y_h$ be arbitrary. Using the second equation and the discrete inf sup inequality it follows
\begin{align*}
\tilde{\beta} &\|u_h^l-\eta_h^l\|_X \leq \sup_{v_h \in Y_h} \frac{1}{\|v_h^l\|_Y} \left[ b(u_h^l-\eta_h^l, v_h^l) \right] \\
& = \sup_{v_h \in Y_h} \frac{1}{\|v_h^l\|_Y} \bigg[ b(u-\eta_h^l, v_h^l) - m(w-w_h^l,v_h^l) -\langle g, v_h^l \rangle + \langle g_h ,v_h \rangle \\
& \hspace{4.25cm} -b_h(u_h,v_h) + m_h(w_h,v_h) + b(u_h^l,v_h^l) -m(w_h^l,v_h^l) \bigg] \\
& \leq C \left[ \|u-\eta_h^l\|_X + \|w-\xi_h^l\|_Y + \|w_h^l-\xi_h^l\|_L +h^k(\|g\|_{Y^*} + \|u_h^l\|_X + \|w_h^l\|_L) \right].
\end{align*}
We can produce a similar bound using the first equation of the system
\begin{align*}
\tilde{\gamma} &\|w_h^l-\xi_h^l\|_Y \leq \sup_{v_h \in X_h} \frac{1}{\|v_h^l\|_X} \left[ b( v_h^l,w_h^l-\xi_h^l) \right] \\
& = \sup_{v_h \in X_h} \frac{1}{\|v_h^l\|_X} \bigg[ b(v_h^l,w-\xi_h^l) +c(u-u_h^l,v_h^l) -\langle f, v_h^l \rangle + \langle f_h, v_h \rangle \\
& \hspace{4.7cm} -b_h(v_h,w_h) - c_h(u_h,v_h) + b(v_h^l,w_h^l) +c(u_h^l,v_h^l) \bigg] \\
& \leq C \left[ \|u-\eta_h^l\|_X + \|w-\xi_h^l\|_Y + \|u_h^l-\eta_h^l\|_X +h^k(\|f\|_{X^*} + \|u_h^l\|_X + \|w_h^l\|_Y) \right].
\end{align*}
Combining these two estimates produces the bound
\begin{equation} \label{eq:genSumBoundWithL2}
\begin{split}
\|u_h^l-\eta_h^l\|_X + \|w_h^l-\xi_h^l\|_Y \leq C \Big[ & \|u-\eta_h^l\|_X + \|w-\xi_h^l\|_Y + \|w_h^l-\xi_h^l\|_L \\
& +h^k(\|f\|_{X^*} + \|g\|_{Y^*} + \|u_h^l\|_X + \|w_h^l\|_Y) \Big].
\end{split}
\end{equation}

To produce the result we must bound the $L$-norm term which appears here. To do so we will add the discrete equations together and use the discrete coercivity relation \eqref{eq:liftedDiscreteCoercivity}. Firstly consider
\begin{align*}
&|c_h(u_h-\eta_h, u_h-\eta_h) + b_h(u_h-\eta_h, w_h-\xi_h)| \\
&= |c(u-\eta_h^l, u_h^l-\eta_h^l) + b(u_h^l-\eta_h^l, w-\xi_h^l) -\langle f, u_h^l-\eta_h^l \rangle + \langle f_h , u_h-\eta_h \rangle \\
& \quad +c(\eta_h^l, u_h^l-\eta_h^l) + b(u_h^l-\eta_h^l, \xi_h^l) - c_h(\eta_h, u_h-\eta_h) - b_h(u_h-\eta_h, \xi_h)| \\
& \leq C \|u_h^l-\eta_h^l\|_X \left[ \|u-\eta_h^l\|_X + \|w-\xi_h^l\|_Y + h^k(\|f\|_{X^*} +\|\eta_h^l\|_X + \|\xi_h^l\|_Y)  \right].
\end{align*}
Treating the second equation similarly produces
\begin{align*}
&|b_h(u_h-\eta_h, w_h-\xi_h) - m_h(w_h-\xi_h, w_h-\xi_h)| \\
&= |b(u-\eta_h^l, w_h^l-\xi_h^l) - m(w-\xi_h^l, w_h^l-\xi_h^l) -\langle g , w_h^l-\xi_h^l \rangle + \langle g_h, w_h-\xi_h \rangle \\
& \quad +b(\eta_h^l, w_h^l-\xi_h^l) - m(\xi_h^l,w_h^l - \xi_h^l) - b_h(\eta_h, w_h-\xi_h) + m_h(\xi_h, w_h-\xi_h)| \\
& \leq C \|w_h^l-\xi_h^l\|_Y \left[ \|u-\eta_h^l\|_X + \|w-\xi_h^l\|_Y + h^k(\|g\|_{Y^*}+\|\eta_h^l\|_X + \|\xi_h^l\|_Y)  \right].
\end{align*}
Combining these two estimates with \eqref{eq:genSumBoundWithL2} produces
\begin{equation}\label{eq:initialFemSumBound}
|c_h(u_h - \eta_h,u_h-\eta_h) + m_h(w_h-\xi_h,w_h-\xi_h)| \leq C \left(\mathbb{B}^2 + \mathbb{B}\|w_h^l-\xi_h^l\|_L \right), 
\end{equation}
where the grouping of terms $\mathbb{B}$ is given by
\begin{equation} \label{eq:bigBracketB}
\begin{split}
\mathbb{B}:=&\|u-\eta_h^l\|_X + \|w-\xi_h^l\|_Y \\
& +h^k(\|f\|_{X^*} + \|g\|_{Y^*} + \|u_h^l\|_X + \|\eta_h^l\|_X + \|w_h^l\|_Y + \|\xi_h^l\|_Y ).
\end{split}
\end{equation}
The coercivity relation in \eqref{eq:liftedDiscreteCoercivity} gives
\[
C\|w_h^l-\xi_h^l\|_L^2 \leq c(G_h^lm(w_h^l-\xi_h^l,\cdot),G_h^l m(w_h^l-\xi_h^l,\cdot) ) + m(w_h^l-\xi_h^l,w_h^l-\xi_h^l), \\
\]
it follows
\begin{equation} \label{eq:L2starterBound}
\begin{split}
C\|w_h^l-\xi_h^l\|_L^2\leq& |c(u_h^l-\eta_h^l,u_h^l-\eta_h^l ) + m(w_h^l-\xi_h^l,w_h^l-\xi_h^l) \\
& - \left[ c_h(u_h -\eta_h,u_h-\eta_h ) + m_h(w_h-\xi_h,w_h-\xi_h) \right]| \\
&+ |c_h(u_h - \eta_h,u_h-\eta_h) + m_h(w_h-\xi_h,w_h-\xi_h)| \\
&+ |c(G_h^lm(w_h^l-\xi_h^l,\cdot),G_h^l m(w_h^l-\xi_h^l,\cdot) ) - c(u_h^l-\eta_h^l,u_h^l-\eta_h^l )|. 
\end{split} \raisetag{2.5\baselineskip}
\end{equation}
To proceed we bound the three terms appearing here. The first term is simply an approximation property,
\begin{gather}
\begin{split}
& |c(u_h^l-\eta_h^l,u_h^l-\eta_h^l ) + m(w_h^l-\xi_h^l,w_h^l-\xi_h^l) - \left[ c_h(u_h -\eta_h,u_h-\eta_h ) + m_h(w_h-\xi_h,w_h-\xi_h) \right]| \\
& \leq Ch^k \left( \|u_h^l-\eta_h^l\|_X^2 + \|w_h^l-\xi_h^l\|_Y^2 \right) \\
&\leq Ch^k \left( \mathbb{B}^2 + \mathbb{B}\|w_h^l-\xi_h^l\|_L + \|w_h^l-\xi_h^l\|_L^2 \right).
\end{split} \raisetag{1\baselineskip}
\label{eq:starter1stLine}
\end{gather}
The final line is true for sufficiently small $h$ and follows from \eqref{eq:genSumBoundWithL2}. The second term we have already bounded in \eqref{eq:initialFemSumBound}. For the final term notice
\begin{align*}
|c&(G_h^lm(w_h^l-\xi_h^l,\cdot),G_h^l m(w_h^l-\xi_h^l,\cdot) ) - c(u_h^l-\eta_h^l,u_h^l-\eta_h^l )| \\
&\leq C(\|G_h^lm(w_h^l-\xi_h^l,\cdot)\|_X + \|u_h^l-\eta_h^l\|_X)\|G_h^lm(w_h^l-\xi_h^l,\cdot)-(u_h^l-\eta_h^l)\|_X.
\end{align*}
To bound these terms first notice, by Lemma \ref{lem:liftedDiscreteCoercivity},
\[
\|G_h^l m(w_h^l-\xi_h^l,\cdot)\|_X \leq C\|m(w_h^l-\xi_h^l,\cdot)\|_{Y^*} \leq C\|w_h^l-\xi_h^l\|_L.
\]
We can then use the bound on $\|u_h^l - \eta_h^l\|_X$ established in \eqref{eq:genSumBoundWithL2} to produce
\begin{align*}
\|G_h^l m(w_h^l-\xi_h^l,\cdot)\|_X + \|u_h^l-\eta_h^l\|_X \leq C \big[ & \|u-\eta_h^l\|_X + \|w-\xi_h^l\|_Y + \|w_h^l-\xi_h^l\|_L \\
&+h^k(\|f\|_{X^*} + \|g\|_{Y^*} + \|u_h^l\|_X + \|w_h^l\|_Y) \big].
\end{align*}
For the second factor we first introduce $G_h^l(g_h^l)$, where $g_h^l \in (Y_h^l)^*$ is defined by
\[
\langle g_h^l, v_h^l \rangle := \langle g_h,v_h \rangle.
\]
Note that the map $G_h^l$ is well defined on $(Y_h^l)^*$, see the proof of Lemma \ref{lem:liftedDiscreteCoercivity}.
By the triangle inequality
\[
\|G_h^lm(w_h^l-\xi_h^l,\cdot)-(u_h^l-\eta_h^l)\|_X \leq \|G_h^l(m(w_h^l,\cdot)+g_h^l)-u_h^l\|_X + \|\eta_h^l -G_h^l(g_h^l + m(\xi_h^l,\cdot))\|_X.
\]
To bound each of these we use the discrete inf sup inequalities and the definition of $G_h^l$. Firstly,
\begin{align*}
&\tilde{\beta}\|G_h^l(m(w_h^l,\cdot)+g_h^l)-u_h^l\|_X \leq \sup_{v_h \in Y_h} \frac{b(G_h^l(m(w_h^l,\cdot)+g_h^l)-u_h^l,v_h^l)}{\|v_h^l\|_Y} \\
& = \sup_{v_h \in Y_h} \frac{1}{\|v_h^l\|_Y} \left[ -b(u_h^l,v_h^l) +b_h(u_h,v_h) - m_h(w_h,v_h) + m (w_h^l,v_h^l) \right] \\
& \leq Ch^k\left( \|u_h^l\|_X + \|w_h^l\|_Y \right).
\end{align*}
Similarly, for the second term
\begin{align*}
&\tilde{\beta}\|\eta_h^l - G_h^l(m(\xi_h^l,\cdot)+g_h^l)\|_X \leq \sup_{v_h \in Y_h} \frac{b(\eta_h^l - G_h^l(m(\xi_h^l,\cdot)+g_h^l),v_h^l)}{\|v_h^l\|_Y} \\
&=\sup_{v_h \in Y_h} \frac{1}{\|v_h^l\|_Y} \left[ \langle g, v_h^l \rangle - \langle g_h,v_h \rangle + m(w-\xi_h^l,v_h^l ) + b(\eta_h^l-u,v_h^l) \right] \\
& \leq C(h^k \|g\|_{Y^*} + \|u-\eta_h^l\|_X + \|w-\xi_h^l\|_Y).
\end{align*}
Thus combining these bounds we see 
\begin{align}
|c&(G_h^lm(w_h^l-\xi_h^l,\cdot),G_h^l m(w_h^l-\xi_h^l,\cdot) ) - c(u_h^l-\eta_h^l,u_h^l-\eta_h^l )| \leq C \left(\mathbb{B}^2 + \mathbb{B}\|w_h^l-\xi_h^l\|_L \right). \label{eq:starter3rdLine}
\end{align}
Now, inserting \eqref{eq:initialFemSumBound}, \eqref{eq:starter1stLine} and \eqref{eq:starter3rdLine} into \eqref{eq:L2starterBound} and considering sufficiently small $h$, to absorb the final term appearing in \eqref{eq:starter1stLine} into the left hand side, produces
\begin{align*}
&\|w_h^l-\xi_h^l\|_L^2\leq C \left( \mathbb{B}^2 + \mathbb{B} \|w_h^l-\xi_h^l\|_L \right).
\end{align*}
Thus by Young's inequality 
\[
\|w_h^l-\xi_h^l\|_L \leq C\mathbb{B}.
\]
Inserting this bound into \eqref{eq:genSumBoundWithL2} gives
\begin{align*}
\|u_h^l-\eta_h^l\|_X + \|w_h^l-\xi_h^l\|_Y \leq C \bigg[  \|u-\eta_h^l\|_X + \|w-\xi_h^l\|_Y + h^k(\|f\|_{X^*} + \|g\|_{Y^*})& \\
  + h^k( \|u_h^l\|_X + \|\eta_h^l\|_X + \|w_h^l\|_Y + \|\xi_h^l\|_Y )& \bigg].
\end{align*}
We can deduce an a priori estimate by setting $\eta_h=\xi_h=0$ as then
\[
\|u_h^l\|_X + \|w_h^l\|_Y \leq C \left[ \|u\|_X + \|w\|_Y  +h^k(\|f\|_{X^*} + \|g\|_{Y^*} + \|u_h^l\|_X + \|w_h^l\|_Y ) \right],
\]
hence using the estimate in Theorem \ref{thm:genSplittingWellPosed}, for sufficiently small $h$,
\begin{equation}
\|u_h^l\|_X + \|w_h^l\|_Y \leq C \left[ \|f\|_{X^*} + \|g\|_{Y^*} \right].
\label{a_priori_estimate}
\end{equation}
Using this bound and the triangle inequality gives
\begin{align*}
\|u-u_h^l\|_X &+ \|w-w_h^l\|_Y \leq \|u - \eta_h^l\|_X + \|w-\xi_h^l\|_Y + \|u_h^l - \eta_h^l\|_X + \|w_h^l-\xi_h^l\|_Y \\
& \leq C \left[ \|u-\eta_h^l\|_X + \|w-\xi_h^l\|_Y  +h^k(\|f\|_{X^*} + \|g\|_{Y^*} + \|\eta_h^l\|_X + \|\xi_h^l\|_Y) \right].
\end{align*}
A further application of the triangle inequality and the a priori estimate in Theorem \ref{thm:genSplittingWellPosed} produces 
\begin{align*}
\|\eta_h^l\|_X + \|\xi_h^l\|_Y &\leq \|u-\eta_h^l\|_X + \|w-\xi_h^l\|_Y + \|u\|_X + \|w\|_Y \\
& \leq \|u-\eta_h^l\|_X + \|w-\xi_h^l\|_Y + C(\|f\|_{X^*} + \|g\|_{Y^*}).
\end{align*}
Thus for sufficiently small $h$ we have
\[
\|u-u_h^l\|_X + \|w-w_h^l\|_Y  \leq C \left[ \|u-\eta_h^l\|_X + \|w-\xi_h^l\|_Y  +h^k(\|f\|_{X^*} + \|g\|_{Y^*}) \right].
\]

Now we obtain the required result by taking an infimum, as the left hand side is independent of $\xi_h$ and $\eta_h$.
\end{proof}
This bound forms the core of the error analysis in our applications. There we will have the existence of an interpolation operator which allows this infimum bound to be turned into an error bound of the form $Ch^\alpha$, for some $0 \leq \alpha \leq k$. Exactly how large this $\alpha$ can be depends upon the regularity of the solution $(u,w)$. We now introduce this error bound in this abstract setting.

\begin{corollary} \label{cor:abstractOrderOfConv}
Suppose there exist Banach spaces $\tilde{X} \subset X$, $\tilde{Y} \subset Y$ such that $(u,w) \in \tilde{X} \times \tilde{Y}$ and with each embedding being continuous. Further assume there exists $\tilde{C},\alpha >0$, independent of $h$, such that 
\[
\inf_{(\eta_h, \xi_h) \in X_h \times Y_h} \|u-\eta_h^l\|_X + \|w-\xi_h^l\|_Y \leq \tilde{C}h^\alpha\left(  \|u\|_{\tilde{X}} + \|w\|_{\tilde{Y}} \right).
\] 
Then, for sufficiently small $h$, there exists $C>0$, independent of $h$, such that 
\[
\|u-u_h^l\|_X + \|w-w_h^l\|_Y \leq C h^{\min\left\{\alpha,k\right\}} \left( \|u\|_{\tilde{X}} + \|w\|_{\tilde{Y}} + \|f\|_{X^*} + \|g\|_{Y^*} \right).
\]
\end{corollary}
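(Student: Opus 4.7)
The plan is to chain the abstract error estimate of Theorem 3.1 with the hypothesized interpolation bound and then collect powers of $h$. First I would apply Theorem 3.1 directly, which gives, for all sufficiently small $h$,
\[
\|u-u_h^l\|_X + \|w-w_h^l\|_Y \leq C \inf_{(\eta_h,\xi_h) \in X_h \times Y_h} \left( \|u-\eta_h^l\|_X + \|w-\xi_h^l\|_Y \right) + C h^k \left( \|f\|_{X^*} + \|g\|_{Y^*} \right).
\]
Next I would invoke the hypothesis on the approximation spaces to bound the infimum above by $\tilde{C} h^\alpha(\|u\|_{\tilde{X}} + \|w\|_{\tilde{Y}})$, producing
\[
\|u-u_h^l\|_X + \|w-w_h^l\|_Y \leq C h^\alpha \left( \|u\|_{\tilde{X}} + \|w\|_{\tilde{Y}} \right) + C h^k \left( \|f\|_{X^*} + \|g\|_{Y^*} \right).
\]

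Finally, since $h$ is small (in particular $h \leq 1$), the elementary inequalities $h^\alpha \leq h^{\min\{\alpha,k\}}$ and $h^k \leq h^{\min\{\alpha,k\}}$ hold, so both terms may be absorbed into a single expression of the form $C h^{\min\{\alpha,k\}}(\|u\|_{\tilde X}+\|w\|_{\tilde Y}+\|f\|_{X^*}+\|g\|_{Y^*})$, which is the desired estimate.

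There is no genuine obstacle here: the corollary is essentially a packaging of Theorem 3.1 together with the interpolation hypothesis into the form most convenient for concrete applications. The only conceptual point worth emphasising is that the effective convergence rate is dictated by the smaller of two independent quantities, namely $\alpha$, which reflects the approximation power of the discrete spaces acting on the regularity of the exact solution $(u,w)$, and $k$, which comes from the geometric/variational consistency errors encoded in the approximation assumptions of Definition 3.1 on $\{c_h,b_h,m_h,f_h,g_h\}$.
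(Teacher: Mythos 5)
Your proposal is correct and matches the paper's intended argument: the paper states Corollary \ref{cor:abstractOrderOfConv} without a separate proof precisely because it follows immediately by inserting the interpolation hypothesis into the infimum bound of Theorem \ref{thm:genFemInfBound} and using $h\leq 1$ to combine the $h^\alpha$ and $h^k$ terms into $h^{\min\{\alpha,k\}}$. Nothing is missing.
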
   

We can also establish higher order error bounds in weaker norms by using a duality argument similar to the Aubin-Nitsche trick. To do so we assume that $c(\cdot,\cdot)$ is symmetric and that the Banach spaces $X$ and $Y$ can be embedded into some larger Hilbert spaces which supply the appropriate weaker norms.

\begin{proposition} \label{prop:abstractDualityBound}
Under the assumptions of Corollary \ref{cor:abstractOrderOfConv}, further suppose $c(\cdot,\cdot)$ is symmetric and there exist Hilbert spaces $H$, $J$ such that $X \subset H$ and $Y \subset J$ with both embeddings being continuous. Let $(\psi,\varphi) \in X \times Y$ denote the unique solution to Problem \ref{prob:genSplittingProb} with right hand side
\[
\eta \mapsto \langle u-u_h^l, \eta \rangle_H \quad \text{  and  } \quad \xi \mapsto \langle w-w_h^l, \xi \rangle_J.
\]  
Assume that there exist Banach spaces $\hat{X} \subset X$ and $\hat{Y} \subset Y$ such that $(\psi,\varphi) \in \hat{X} \times \hat{Y}$ with both embeddings continuous and $\tilde{C}, \beta >0$ such that
\begin{equation} \label{eq:dualityApproxAssumption}
\inf_{(\eta_h, \xi_h) \in X_h \times Y_h} \|\psi-\eta_h^l\|_X + \|\varphi-\xi_h^l\|_Y \leq \tilde{C}h^\beta\left(  \|\psi\|_{\hat{X}} + \|\varphi\|_{\hat{Y}} \right).
\end{equation}
Finally assume the regularity result
\begin{equation} \label{eq:dualityRegAssumption}
\|\psi\|_{\hat{X}} + \|\varphi\|_{\hat{Y}} \leq \hat{C}(\|u-u_h^l\|_H + \|w-w_h^l\|_J).
\end{equation}
Then, for sufficiently small $h$, there exists $C>0$, independent of $h$, such that
\[
\|u-u_h^l\|_H + \|w-w_h^l\|_J \leq C h^{\min\left\{\alpha + \beta,k\right\}} \left( \|u\|_{\tilde{X}} + \|w\|_{\tilde{Y}} + \|f\|_{X^*} + \|g\|_{Y^*} \right).
\]
\end{proposition}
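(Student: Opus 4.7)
The plan is a standard Aubin--Nitsche duality argument, adapted to cope with the non-conformity $X_h^l \not\subset X$ and the fact that $c_h, b_h, m_h$ only approximate $c, b, m$. I would begin by testing the two dual equations with the error pair $(u-u_h^l,\, w-w_h^l)$, invoking the symmetry of $c$ and $m$, and adding the identities to obtain
\begin{equation*}
\|u-u_h^l\|_H^2 + \|w-w_h^l\|_J^2 = c(u-u_h^l,\psi) + b(u-u_h^l,\varphi) + b(\psi,w-w_h^l) - m(w-w_h^l,\varphi).
\end{equation*}
The target is to bound the right-hand side by $C h^{\min\{\alpha+\beta,k\}}(\|u\|_{\tilde X} + \|w\|_{\tilde Y} + \|f\|_{X^*} + \|g\|_{Y^*})(\|\psi\|_{\hat X} + \|\varphi\|_{\hat Y})$, after which the regularity assumption \eqref{eq:dualityRegAssumption} converts $\|\psi\|_{\hat X} + \|\varphi\|_{\hat Y}$ into one copy of $\|u-u_h^l\|_H + \|w-w_h^l\|_J$ that can be absorbed on the left.

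To estimate the right-hand side I would fix an arbitrary $(\eta_h,\xi_h) \in X_h \times Y_h$ and split each of $\psi$ and $\varphi$ as $(\psi-\eta_h^l)+\eta_h^l$ and $(\varphi-\xi_h^l)+\xi_h^l$. The four ``best-approximation'' pieces that carry $\psi-\eta_h^l$ or $\varphi-\xi_h^l$ are controlled by $(\|u-u_h^l\|_X + \|w-w_h^l\|_Y)(\|\psi-\eta_h^l\|_X + \|\varphi-\xi_h^l\|_Y)$, which by the energy estimate of Corollary \ref{cor:abstractOrderOfConv} and the dual approximation hypothesis \eqref{eq:dualityApproxAssumption} contributes at order $h^{\min\{\alpha,k\}+\beta}$ times the data times $\|\psi\|_{\hat X} + \|\varphi\|_{\hat Y}$. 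The four remaining ``consistency'' pieces are
\begin{equation*}
c(u-u_h^l,\eta_h^l) + b(\eta_h^l,w-w_h^l) \quad \text{and} \quad b(u-u_h^l,\xi_h^l) - m(\xi_h^l,w-w_h^l),
\end{equation*}
which would vanish in a conforming Galerkin scheme; here, subtracting the discrete system of Problem \ref{prob:genSplittingFem} from the continuous one rewrites each as a sum of non-conformity gaps such as $\langle f,\eta_h^l\rangle - \langle f_h,\eta_h\rangle$, $c(u_h^l,\eta_h^l) - c_h(u_h,\eta_h)$, $b(\eta_h^l,w_h^l) - b_h(\eta_h,w_h)$, and analogously from the second equation. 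By the approximation properties in Definition \ref{def:genFemSetUp}, each of these is bounded by $Ch^k$ times products of norms; the a priori bound \eqref{a_priori_estimate} controls $\|u_h^l\|_X + \|w_h^l\|_Y$ by $\|f\|_{X^*} + \|g\|_{Y^*}$, while the triangle inequality combined with \eqref{eq:dualityApproxAssumption} and the continuous embeddings $\hat X \hookrightarrow X$, $\hat Y \hookrightarrow Y$ bounds $\|\eta_h^l\|_X + \|\xi_h^l\|_Y$ by $C(\|\psi\|_{\hat X} + \|\varphi\|_{\hat Y})$ for $h$ small. This gives the consistency contribution at order $h^k$ with the same data and dual-norm factors.

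Combining the two contributions and observing $\min\{\alpha,k\}+\beta \geq \min\{\alpha+\beta,k\}$ delivers the target bound on the right-hand side; plugging in \eqref{eq:dualityRegAssumption} and using $a^2+b^2 \geq \tfrac12(a+b)^2$ on the left allows us to divide through by $\|u-u_h^l\|_H + \|w-w_h^l\|_J$ and finish. The main technical obstacle I expect is the careful bookkeeping in the consistency block: one has to choose the dual approximation pair $(\eta_h, \xi_h)$ so that both the rate \eqref{eq:dualityApproxAssumption} and the uniform control $\|\eta_h^l\|_X + \|\xi_h^l\|_Y \leq C(\|\psi\|_{\hat X} + \|\varphi\|_{\hat Y})$ hold simultaneously, and then to track every non-conformity error so that, after pulling out the $h^k$, what remains can be absorbed into the dual-norm factor that the regularity hypothesis eventually consumes.
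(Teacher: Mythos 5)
Your proposal is correct and follows essentially the same route as the paper: test the dual problem with the error pair using the symmetry of $c$ and $m$, split $\psi$ and $\varphi$ against an arbitrary discrete pair, bound the best-approximation block via the energy estimate together with \eqref{eq:dualityApproxAssumption}, rewrite the consistency block using the continuous and discrete systems and the $O(h^k)$ approximation properties plus the a priori bound \eqref{a_priori_estimate}, and then conclude with \eqref{eq:dualityRegAssumption} and a Young-type absorption. The only differences are cosmetic bookkeeping (the paper keeps $\|\psi-\eta_h^l\|_X+\|\varphi-\xi_h^l\|_Y+\|\psi\|_X+\|\varphi\|_Y$ explicit until after taking the infimum, which also resolves your concern about choosing $(\eta_h,\xi_h)$ simultaneously for both bounds).
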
 
\begin{proof}
Let $(\psi,\varphi)$ be as defined in the statement above. It follows, for any $(\eta_h, \xi_h) \in X_h \times Y_h$,
\begin{align*}
 &\langle  u-u_h^l, u-u_h^l \rangle_H + \langle w-w_h^l, w-w_h^l \rangle_J \\
 &= c(u-u_h^l,\psi -\eta_h^l) + b(u-u_h^l,\varphi - \xi_h^l) + b(\psi-\eta_h^l,w-w_h^l) -m(w-w_h^l,\varphi - \xi_h^l) \\
&\quad + \langle f,\eta_h^l \rangle - \langle f_h,\eta_h \rangle + \langle g,\xi_h^l \rangle - \langle g_h,\xi_h \rangle - c(\eta_h^l,u_h^l) +c_h(u_h, \eta_h) \\
&\quad -b(u_h^l,\xi_h^l) + b_h(u_h,\xi_h) -b(\eta_h^l,w_h^l) + b_h(\eta_h,w_h) + m(w_h^l,\xi_h^l) - m_h(\xi_h,w_h).  
\end{align*}
It follows, using the boundedness and approximation properties of the bilinear operators,
\begin{align*}
 \langle u&-u_h^l, u-u_h^l \rangle_H + \langle w - w_h^l, w - w_h^l \rangle_J \\
\leq& C \Big[ (\|\psi-\eta_h^l\|_X + \|\varphi-\xi_h^l\|_Y)(\|u-u_h^l\|_X + \|w-w_h^l\|_Y) \\
& \quad + h^k(\|f\|_{X^*} + \|g\|_{Y^*} )(\|\psi-\eta_h^l\|_X + \|\varphi - \xi_h^l\|_Y + \|\psi\|_X + \|\varphi\|_Y ) \Big].
\end{align*}
Taking the infimum with respect to $(\eta_h, \xi_h)$ gives
\begin{align*}
	&\| u - u_h^l \|_H^2 + \| w - w_h^l \|_J^2 \\
	&\leq C ( \| u - u_h^l \|_H + \| w - w_h^l \|_J)
	\left[ h^{\alpha + \beta} ( \| u \|_{\tilde{X}} + \| w \|_{\tilde{Y}}) + h^k ( \| f \|_{X^*} + \| g \|_{Y^*}) \right]
\end{align*}
The result is then deduced, for sufficiently small $h$, using Young's inequality.
\end{proof}

\section{Surface calculus and  surface finite elements}\label{SurfCalc}

In this section we establish some notation with respect to surface PDEs and surface finite elements and study a particular bilinear form associated with a positive definite second order elliptic operator.
\subsection{Surface calculus}
We follow the development in \cite{DziEll13}. Let $\Gamma$ be a closed (that is compact and without boundary) $C^k$-hypersurface in $\mathbb R^3$, where
$k$ is as large as needed but at most $4$. There is a bounded domain $U \subset \mathbb{R}^3$ such that $\Gamma$ is the boundary set of $U$.
The unit normal $\nu$ to $\Gamma$  
that points away from this domain is called the outward unit normal.
We define $P:=  \unit - \nu \otimes \nu$ on $\Gamma$ 
to be, at each point of $\Gamma$, the projection onto the corresponding tangent space.
Here $\unit$ denotes the identity matrix in $\mathbb{R}^{3}$. 
For a differentiable function $f$ on $\Gamma$ we define the tangential gradient by
\begin{equation*}
	\nabla_{\Gamma} f := P \nabla \overline{f},
\end{equation*}
where $\overline{f}$ is a differentiable extension of $f$ to an open neighbourhood of 
$\Gamma \subset \mathbb{R}^{3}$. Here, $\nabla$ denotes the usual gradient in $\mathbb{R}^{3}$.
The above definition only depends on the values of $f$ on $\Gamma$.
In particular, it does not dependent on the extension $\overline{f}$, 
see Lemma 2.4 in \cite{DziEll13} for more details. 
The components of the tangential gradient are denoted by $(\D 1 f, \D 2 f, \D {3} f)^T := \nabla_\Gamma f$.
For a differentiable vector field $v: \Gamma \rightarrow \mathbb{R}^3$ we define the divergence by
$\nabla_{\Gamma} \cdot v:= \D 1 v_1 + \D 2 v_2 + \D 3 v_3$.
For a twice differentiable function the Laplace-Beltrami operator is defined by
$$
	\Delta_\Gamma f := \nabla_\Gamma \cdot \nabla_\Gamma f.
$$

The extended Weingarten map $\mathcal{H} := \nabla_{\Gamma} \nu$ is symmetric and has zero eigenvalue
in the normal direction. The eigenvalues $\kappa_i$, $i=1, 2$, belonging to the tangential eigenvectors
are the principal curvatures of $\Gamma$. The mean curvature $H$ is the sum of the principal curvatures,
that is $H := \sum_{i=1}^2 \kappa_i = \mbox{trace}\;( \mathcal{H}) = \nabla_{\Gamma} \cdot \nu$. 
Note that our definition differs from the more common one by a factor of $2$.
We will denote the identity function on $\Gamma$ by $id_\Gamma$, that is $id_\Gamma(p) = p$ for all $p \in \Gamma$.
The mean curvature vector $H \nu$ satisfies $H \nu = - \Delta_\Gamma id_\Gamma$, see Section 2.3 in \cite{DecDziEll05}.

\subsection{Surface finite elements}
\label{sub_section_SFEM}
We will consider surface finite elements, \cite{DziEll13}.
We assume that the surface $\Gamma$ is approximated by a polyhedral hypersurface
$$
	\Gamma_h = \bigcup_{T \in \mathcal{T}_h} T,
$$ 
where $\mathcal{T}_h$ denotes the set of two-dimensional simplices in $\mathbb{R}^{3}$
which are supposed to form an admissible triangulation. 
For $T \in \mathcal{T}_h$ the diameter of $T$ is $h(T)$ and the radius of the largest ball contained in $T$ is $\rho(T)$.
We set $h := \max_{T \in \mathcal{T}_h} h(T)$ and assume that the ratio between $h$ and $\rho(T)$ is uniformly bounded (independently of $h$). 
We assume that $\Gamma_h$ is
contained in a strip $\mathcal{N}_\delta$ of width $\delta > 0$ around $\Gamma$ on which the
decomposition 
$$
	x = p + d(x) \nu(p), \quad p \in \Gamma 
$$ 
is unique for all $x \in \mathcal{N}_\delta$. Here, $d(x)$ denotes the oriented distance function to $\Gamma$,
see Section 2.2 in \cite{DecDziEll05}. This defines a map $x \mapsto p(x)$ from $\mathcal{N}_\delta$ onto 
$\Gamma$. We here assume that the restriction $p_{|\Gamma_h}$ 
of this map on the polyhedral hypersurface $\Gamma_h$
is a bijective map between $\Gamma_h$ and $\Gamma$. In addition, the vertices of the simplices 
$T \in \mathcal{T}_h$ are supposed to sit on $\Gamma$. The generation of these triangulations for torii is rather standard, see for example \cite{DziEll13}. 

The piecewise affine Lagrange finite element space on $\Gamma_h$ is
$$
	\mathcal{S}_h := \left\{ \chi \in C(\Gamma_h) \;|\; \chi_{T} \in P^1(T)\;
	\forall T \in \mathcal{T}_h \right\},
$$ 
where $P^1(T)$ denotes the set of polynomials of degree $1$ or less on $T$.
The Lagrange basis functions $\varphi_i$ of this space are uniquely determined by their values
at the so-called Lagrange nodes $q_j$, that is $\varphi_i(q_j) = \delta_{ij}$. 
The associated Lagrange interpolation of a continuous function $f$ on $\Gamma_h$ is defined by
$$
	I_h f := \sum_{i} f(q_i) \varphi_i.
$$

We now introduce the lifted discrete spaces. We will use the standard lift operator as constructed in \cite[Section 4.1]{DziEll13}.  The lift $f^l$ of a function $f: \Gamma_h \rightarrow \mathbb{R}$ onto $\Gamma$ is defined by 
$$f^l(x) := (f \circ p_{|\Gamma_h}^{-1})(x)$$
 for all $x \in \Gamma$.
The inverse map is denoted by $f^{-l} := f \circ p$. The lifted finite element space is 
$$\mathcal{S}_h^l := \left\{ \chi^l \;|\; \chi \in \mathcal{S}_h \right\}.$$ 
Finally, the lifted Lagrange interpolation $I_h^l: C(\Gamma) \rightarrow \mathcal{S}_h^l$ is given by
$I_h^l f := (I_h f^{-l})^l$.
In the next section we introduce a bilinear form $b$ on $\Gamma$ for which we prove that the lifted discrete spaces satisfy the conditions in Assumption \ref{def:liftedDiscreteSetup} when we set $X_h^l := Y_h^l := \mathcal{S}_h^l$.
To be more precise here, for a sequence of triangulated surfaces $(\Gamma_{h_n})_{n \in \mathbb{N}}$ with maximal diameter $h_n \searrow 0$ for $n \rightarrow \infty$
we set $X_n := X_{h_n}^l = \mathcal{S}_{h_n}^l$ and $Y_n := Y_{h_n}^l = \mathcal{S}_{h_n}^l$.

\section{A useful bilinear form $b(\cdot,\cdot)$}\label{BilinearFormb}

Throughout this section let
 $b:X \times Y \rightarrow \mathbb{R}$ be given by
\[
b(u,v) := \int_\Gamma \nablaG u \cdot \nablaG v + \lambda uv \;do
\] 
for appropriate Banach spaces $X$ and $Y$ and positive constant $\lambda$.
\subsection{Inf-sup conditions}

\begin{proposition} \label{prop:infSupsforb}
Suppose $1< p \leq 2 \leq q < \infty$ are chosen such that $1/p + 1/q =1$. Let $\lambda > 0$, 
$X=W^{1,q}(\Gamma)$ and  $Y=W^{1,p}(\Gamma)$.There exist $\beta, \gamma > 0$ such that 
\[
\beta \|\eta\|_X \leq \sup_{\xi \in Y} \frac{b(\eta,\xi)}{\|\xi\|_Y} \;\; \forall \eta \in X \quad \text{and} \quad 
\gamma \|\xi\|_Y \leq \sup_{\eta \in X} \frac{b(\eta,\xi)}{\|\eta\|_X} \;\; \forall \xi \in Y.
\]
\end{proposition}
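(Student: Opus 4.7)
The plan is to recast both inf-sup inequalities as invertibility of the operator induced by $b$ and then invoke the standard $L^p$-regularity theory for the self-adjoint positive elliptic operator $-\Delta_\Gamma + \lambda$ on the closed smooth surface $\Gamma$. The key structural observation is that $b$ is symmetric in its arguments, $b(\eta,\xi) = b(\xi,\eta)$, so the second inf-sup inequality for the pair $(X,Y)=(W^{1,q}(\Gamma), W^{1,p}(\Gamma))$ is literally the first inf-sup inequality for the pair $(W^{1,p}(\Gamma), W^{1,q}(\Gamma))$. Since the hypothesis $1 < p \leq 2 \leq q < \infty$ with $1/p+1/q=1$ is invariant under swapping $p$ and $q$, it suffices to prove the first inf-sup inequality for an arbitrary admissible pair, and the second then follows by relabelling.

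The first inf-sup inequality is equivalent to the statement that the operator $B:W^{1,q}(\Gamma)\to (W^{1,p}(\Gamma))^*$ defined by $\langle B\eta,\xi\rangle := b(\eta,\xi)$ is bounded below. Identifying $(W^{1,p}(\Gamma))^* \cong W^{-1,q}(\Gamma)$ in the usual way, $B$ is nothing other than the weak realisation of $-\Delta_\Gamma + \lambda$ from $W^{1,q}(\Gamma)$ to $W^{-1,q}(\Gamma)$, so it is enough to show that this map is an isomorphism; the desired bound $\|\eta\|_{W^{1,q}} \leq \beta^{-1}\|B\eta\|_{W^{-1,q}}$ then drops out of the open mapping theorem.

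I would establish the isomorphism in two stages. First, for $p=q=2$, the form $b$ is bounded, symmetric and coercive on $H^1(\Gamma)$ because $\lambda>0$ controls the zero-order term and the Dirichlet term controls the tangential gradient, so Lax--Milgram immediately yields $B\colon H^1(\Gamma)\to H^{-1}(\Gamma)$ as an isomorphism. Second, for general admissible $p,q$, I would invoke classical elliptic $L^r$-theory on the compact smooth manifold $\Gamma$: localising to a finite smooth atlas with subordinate partition of unity, the operator $-\Delta_\Gamma+\lambda$ is transformed into a strictly elliptic second order operator with smooth coefficients on Euclidean pieces, so Calder\'on--Zygmund (Agmon--Douglis--Nirenberg) estimates give an isomorphism $W^{2,r}(\Gamma)\to L^r(\Gamma)$ for every $r\in(1,\infty)$; duality against the $r'$-case and interpolation with the $L^2$ result then promote this to an isomorphism $W^{1,q}(\Gamma)\to W^{-1,q}(\Gamma)$ for every $q\in(1,\infty)$.

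The main obstacle is that this argument rests on $L^p$-regularity for the Laplace--Beltrami operator on a closed surface rather than on a Euclidean domain. On a smooth compact hypersurface this is entirely standard and can be cited, but it does require some care in handling the surface setting via charts; once it is in hand, the quantitative bound and the symmetry remark above immediately produce both constants $\beta$ and $\gamma$.
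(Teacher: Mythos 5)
Your proposal is sound and reaches the same structural reduction as the paper: both inf-sup inequalities are recast as invertibility of the operator induced by $b$, with the constants coming from the bounded/open mapping theorem. The technical core differs, though. The paper does not cite the full $W^{1,r}(\Gamma)\to W^{-1,r}(\Gamma)$ isomorphism theory on the surface; instead it constructs the inverse by hand on the delicate side $p\le 2$: given $F\in W^{1,q}(\Gamma)^*$ it sets $g:=T^*F$, where $T$ is the $L^2\to H^2$ solution operator of $-\Delta_\Gamma+\lambda$, and proves $g\in W^{1,p}(\Gamma)$ by an explicit surface computation, commuting $\underline{D}_\alpha$ with $\Delta_\Gamma$ (picking up curvature terms) and using only the mapping property $T\in\mathcal{L}(L^q(\Gamma),W^{2,q}(\Gamma))$ for the conjugate exponent $q\ge 2$; both inf-sup constants then follow from boundedness of $A^{-1}$ and $(A^*)^{-1}$. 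Your route instead invokes Agmon--Douglis--Nirenberg/Calder\'on--Zygmund theory on the closed surface together with duality and interpolation, and uses the symmetry of $b$ to relabel. The paper's argument is more self-contained (no interpolation machinery, only $L^2$- and $L^q$-based regularity) at the price of the commutator computation; yours is shorter if the manifold elliptic theory is taken as citable.

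Two caveats you should repair. First, the hypothesis $1<p\le 2\le q<\infty$ is not literally invariant under swapping $p$ and $q$; your relabelling argument works only because you in fact claim the isomorphism $-\Delta_\Gamma+\lambda:W^{1,r}(\Gamma)\to W^{-1,r}(\Gamma)$ for \emph{every} $r\in(1,\infty)$, which covers both orderings --- make that explicit. Second, ``interpolation with the $L^2$ result'' is not the right interpolation: interpolating $W^{2,r}\to L^r$ against $H^1\to H^{-1}$ changes the integrability exponent along the scale and cannot produce $W^{1,q}\to W^{-1,q}$ for $q\ne 2$. The correct step is to interpolate the isomorphism $W^{2,r}\to L^r$ with its dual $L^r\to W^{-2,r}$ (using formal self-adjointness) at the midpoint, and to verify that the interpolated inverse coincides with the inverse of the interpolated operator; with that standard correction your argument closes.
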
 
\begin{proof}
Consider the map $A:W^{1,p}(\Gamma) \rightarrow W^{1,q}(\Gamma)^*$ given, for each $u \in W^{1,p}(\Gamma)$ by
\[
A(u)[v]:= b(v,u). 
\]
Evidently $A$ is well-defined and linear, by H\"{o}lder's inequality it is also continuous. We will now show that it is an isomorphism, beginning with showing that $A$ is surjective. Consider the inverse Laplacian type map $T:L^2(\Gamma) \rightarrow H^2(\Gamma)$, where, for $f\in L^2(\Gamma)$,  $Tf \in H^1(\Gamma)$ is defined to be  the unique solution to 
\[
b(Tf,v) = \int_\Gamma fv \quad \forall v \in H^1(\Gamma).
\]    
That $T$ is well defined, continuous and a bijection follows by elliptic regularity. It is immediate that $T^{-1}=-\DeltaG + \lambda Id$. 
Now suppose $F \in W^{1,q}(\Gamma)^*$ and set $g:=T^*(F) \in L^2(\Omega)$, this is well defined as $W^{1,q}(\Gamma)^* \subset H^2(\Gamma)^*$. 
For any $\varphi \in C^\infty_0(\Gamma)$ and first order derivative $\D\alpha$ it holds
\begin{align*}
&\int_\Gamma g\D\alpha\varphi = \int_\Gamma g\D\alpha T^{-1} T \varphi \\
&=\int_\Gamma g \left( T^{-1} \D\alpha T \varphi - \nu_\alpha (2\mathcal{H}:\nablaG \nablaG T\varphi + \nablaG H \cdot \nablaG T\varphi) -\left[(2\mathcal{H}^2-H\mathcal{H})\nablaG T\varphi \right]_\alpha \right).
\end{align*}
The second line is due to a commutation relation for $\D \alpha$ and $\DeltaG$ which follows from \cite[Lemma 2.6]{DziEll13}. To be more explicit, by summing over 
repeated indices we obtain for a twice continously differentiable function $u$ on $\Gamma$
\begin{align*}
\D \alpha \DeltaG u &= \D \alpha \D \beta \D \beta u = \D \beta \D \alpha \D \beta u + (\mathcal{H}_{\beta \gamma} \nu_\alpha - \mathcal{H}_{\alpha \gamma} \nu_\beta) 
\D \gamma \D \beta u
\\
&= \D \beta ( \D \beta \D \alpha u + (\mathcal{H}_{\beta \gamma} \nu_\alpha - \mathcal{H}_{\alpha\gamma} \nu_\beta ) \D \gamma u)
+ (\mathcal{H}_{\beta \gamma} \nu_\alpha - \mathcal{H}_{\alpha \gamma} \nu_\beta) 
\D \gamma \D \beta u
\\
&= \DeltaG \D \alpha u + (\nu_\alpha \D \beta \mathcal{H}_{\beta \gamma} + \mathcal{H}_{\beta \gamma} \mathcal{H}_{\beta \alpha} - H \mathcal{H}_{\alpha \gamma}) \D \gamma u
+ 2 \nu_\alpha \mathcal{H}_{\beta \gamma} \D \beta \D \gamma u
- \nu_\beta \mathcal{H}_{\alpha \gamma} \D \gamma \D \beta u,
\end{align*} 
and
\begin{align*}
& \D \beta \mathcal{H}_{\beta \gamma} = \D \beta \D \gamma \nu_\beta = \D \gamma \D \beta \nu_\beta + (\mathcal{H}_{\gamma \rho} \nu_\beta - \mathcal{H}_{\beta \rho} \nu_\gamma) \D \rho \nu_\beta 
= \D \gamma H - \mathcal{H}_{\beta \rho} \mathcal{H}_{\rho \beta} \nu_\gamma,
\\
& - \nu_\beta \mathcal{H}_{\alpha \gamma} \D \gamma \D \beta u = - \nu_\beta \mathcal{H}_{\alpha \gamma} (\mathcal{H}_{\beta \rho} \nu_\gamma - \mathcal{H}_{\gamma \rho} \nu_\beta)
\D \rho u = \mathcal{H}_{\alpha \gamma} \mathcal{H}_{\gamma \rho} \D \rho u.
\end{align*}
It then follows that
\begin{align*}
&\int_\Gamma -g\D\alpha\varphi + H\nu_\alpha g \varphi
\\ &=\langle F, T\left(H\nu_\alpha \varphi + \nu_\alpha (2\mathcal{H}:\nablaG \nablaG T\varphi + \nablaG H \cdot \nablaG T\varphi) + \left[(2\mathcal{H}^2-H\mathcal{H})\nablaG T\varphi \right]_\alpha \right) \rangle \\
& \quad -\langle F, \D\alpha T\varphi \rangle.  
\end{align*}
Notice $T \in \mathcal{L}(L^q(\Gamma),W^{2,q}(\Gamma))$, $\D\alpha \in \mathcal{L}(W^{2,q}(\Gamma),W^{1,q}(\Gamma))$ and
thus we may extend the map $\varphi \mapsto -\langle F, \D\alpha T\varphi \rangle$ to $L^q(\Gamma)$ and that extension lies in $L^q(\Gamma)^*$. The first term may be treated in a similar manner. It follows there exists $g_\alpha \in L^p(\Gamma)$ such that 
\[
\int_\Gamma -g\D\alpha\varphi + H\nu_\alpha g \varphi = \int_\Gamma g_\alpha \varphi \quad \forall \varphi \in C^\infty_0(\Gamma).
\]    
Hence $g \in W^{1,p}(\Gamma)$.
Now, for the constructed $g \in W^{1,p}(\Gamma)$ it holds, for any $v \in H^2(\Gamma)$,
\[
\int_\Omega g(-\Delta v + \lambda v) = \int_\Omega T^*F T^{-1}v = \langle F, v \rangle.    
\]
Integrating the left hand side by parts and using density the above equation implies, for any $v \in W^{1,q}(\Gamma)$,
\[
A(g)[v] = \int_\Omega \nablaG g \cdot \nablaG v + \lambda gv = \langle F,v \rangle.
\]
Hence $A(g)=F$ and thus $A$ is surjective. To show $A$ is injective, suppose $A(u)=0$, then in particular,
\[
0=A(u)[Tu] = \int_\Gamma u^2 \implies u = 0.
\]
Thus $A$ is a bijection and by the bounded inverse theorem $A^{-1}$ is also bounded, it follows
\[
\|\xi\|_Y \leq \|A^{-1}\| \|A\xi\|_{X^*} \quad \forall \xi \in Y.
\]
Hence we obtain
\[
\|A^{-1}\|^{-1} \|\xi \|_Y \leq \sup_{\eta \in X } \frac{b(\eta,\xi)}{\|\eta\|_X }.
\] 
Additionally, $(A^*)^{-1}=(A^{-1})^*$ is bounded, thus similarly 
\[
\|(A^*)^{-1}\|^{-1} \|\eta\|_X \leq \sup_{\xi \in Y } \frac{A^*(\eta)[\xi]}{\|\xi\|_Y }.
\] 
Finally notice $A^*(\eta)[\xi]=A(\xi)[\eta]=b(\eta,\xi)$, completing the second inf sup inequality.
Here, we have implicitly made use of the canonical isomorphism between $X$ and $X^{**}$.
\end{proof}    

\subsection{Ritz projection}
For the approximation and uniform convergence conditions \eqref{eq:YTildeSupConv} related to our bilinear form $b(\cdot,\cdot)$ we will make use of the Ritz projection which is defined in the lemma below.
\begin{lemma} \label{lem:ritzProjection}
Suppose $\lambda > 0$, let $1<r\leq\infty$,  $X:=W^{1,r}(\Gamma)$ and  $Y:= W^{1,s}(\Gamma)$ 
where $1 \leq s < \infty$ is chosen such that $1/r + 1/s =1$.
For each $h>0$, let $X_h^l:=Y_h^l:=\mathcal{S}_h^l$. There exists a bounded linear map $\Pi_h:W^{1,r}(\Gamma) \rightarrow (\mathcal{S}_h^l,\|\cdot \|_{1,r})$ given by
\[
b(\Pi_h \varphi, v_h^l) = b(\varphi, v_h^l) \quad \forall v^l_h \in \mathcal{S}^l_h.
\] 
There exists $C(r)>0$, independent of $h$, such that 
\[
\| \Pi_h \psi\|_{1,r} \leq C(r) \|\psi\|_{1,r} \quad \forall \psi \in W^{1,r}(\Gamma).
\]
Finally, it holds that
\[
 \sup_{\psi \in W^{1,r}(\Gamma)} \frac{\| \psi - \Pi_h \psi \|_{0,2}}{\| \psi \|_{1,r}} \rightarrow 0 \quad \textnormal{as} \quad h \searrow 0.
\]
\end{lemma}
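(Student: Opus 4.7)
The plan is to establish the three claims in order: existence and linearity of $\Pi_h$, the uniform $W^{1,r}$-stability bound, and the uniform $L^2$-convergence. For existence and linearity, I would first treat the Hilbertian case $r = s = 2$: the form $b$ is symmetric, continuous and, because $\lambda > 0$, coercive on $H^1(\Gamma)$ with $b(u,u) \geq \min(1,\lambda)\|u\|_{1,2}^2$, so Lax--Milgram applied on the finite-dimensional subspace $\mathcal{S}_h^l$ produces a unique $\Pi_h \varphi \in \mathcal{S}_h^l$ for every $\varphi \in H^1(\Gamma)$, and linearity of $\Pi_h$ follows from linearity of $b$ in its first slot. For general $1 < r \leq \infty$ with $r \neq 2$, one initially defines $\Pi_h$ on the common dense subspace $C^\infty(\Gamma) \subset W^{1,r}(\Gamma) \cap H^1(\Gamma)$ via the $r = 2$ construction, and extends by density using the stability estimate produced in the next step.

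The $W^{1,r}$-stability bound is the core of the lemma and requires combining several known ingredients. The endpoint $r = 2$ is immediate from Cea's lemma and the $H^1$-coercivity of $b$. The endpoint $r = \infty$ is the maximum-norm stability of the Ritz projection for the symmetric elliptic operator $-\DeltaG + \lambda \, Id$; in Euclidean domains this is due to Rannacher--Scott and Schatz--Wahlbin, and rests on weighted $L^2$ estimates using sharp pointwise Green's function bounds, which transfer to a smooth closed surface by localising through charts of $\Gamma$ and exploiting the approximation properties of the lifted space $\mathcal{S}_h^l$ recalled in Section \ref{SurfCalc}. With both endpoint stabilities in hand, the case $2 \leq r \leq \infty$ follows by interpolation between Sobolev spaces, and the case $1 < r < 2$ by a duality argument that uses the symmetry of $b$ together with the already-proven stability at the H\"older-conjugate exponent $s > 2$.

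For the uniform $L^2$-approximation I would distinguish two cases. When $r \geq 2$, the continuous embedding $W^{1,r}(\Gamma) \hookrightarrow H^1(\Gamma)$ holds because $\Gamma$ is compact, and the standard Aubin--Nitsche argument applies: setting $e_h := \psi - \Pi_h\psi$, one solves $b(z,v) = \langle e_h, v \rangle_{L^2}$ for $z \in H^2(\Gamma)$ with $\|z\|_{2,2} \leq C\|e_h\|_{0,2}$ by elliptic regularity on the smooth closed surface, and uses Galerkin orthogonality of $e_h$ together with the interpolation estimate $\|z - I_h^l z\|_{1,2} \leq Ch\|z\|_{2,2}$ to obtain
\[
\|e_h\|_{0,2}^2 = b(e_h, z - I_h^l z) \leq C h\|e_h\|_{1,2}\|e_h\|_{0,2} \leq Ch \|\psi\|_{1,2}\|e_h\|_{0,2} \leq Ch \|\psi\|_{1,r}\|e_h\|_{0,2},
\]
giving an $O(h)$ bound for the supremum ratio. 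When $1 < r < 2$, the $H^1$-embedding fails, so I would instead use the compactness of $W^{1,r}(\Gamma) \hookrightarrow L^2(\Gamma)$, which holds on a compact $2$-surface whenever $r > 1$, combined with the just-established $W^{1,r}$-stability. A standard compactness-plus-density argument forces operator-norm convergence: if the claimed supremum did not tend to zero, one could extract a sequence $\psi_n$ in the unit ball of $W^{1,r}$ with $\|\psi_n - \Pi_{h_n}\psi_n\|_{0,2}$ bounded below, pass to an $L^2$-convergent subsequence $\psi_n \to \psi_*$, approximate $\psi_*$ by smooth functions on which the $r=2$ estimate applies, and use the uniform stability to reach a contradiction.

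The principal obstacle is the $W^{1,\infty}$-stability of the Ritz projection on a smooth closed surface discretised by piecewise linear surface finite elements: the planar proofs are already technically demanding, and their transfer to a surface requires careful handling of the chart-induced metric perturbations and of the non-conformity introduced by lifting $\mathcal{S}_h^l$ from the polyhedral surface $\Gamma_h$ to $\Gamma$, so that every auxiliary estimate involving the Green's function of $-\DeltaG + \lambda$ remains uniform in $h$.
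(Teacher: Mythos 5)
Your treatment of the stability bound follows the same skeleton as the paper: $r=2$ from coercivity, $r=\infty$ from maximum-norm stability of the Ritz projection, interpolation of Sobolev spaces for $2\le r\le\infty$, and a duality argument at the conjugate exponent for $1<r<2$. One practical remark: the paper does not re-derive Rannacher--Scott type $W^{1,\infty}$ stability on the surface; it simply cites an existing surface finite element result (formula (4.16) in \cite{Pow17}), so the ``principal obstacle'' you identify is resolved by citation rather than by transferring the Euclidean proof through charts. Likewise, existence of $\Pi_h$ needs no density extension: for any $\varphi\in W^{1,r}(\Gamma)$, $r>1$, the functional $v_h^l\mapsto b(\varphi,v_h^l)$ is well defined by H\"older's inequality, and solvability is just invertibility of $S+\lambda M$, which also avoids the mild circularity in your plan of using the $W^{1,r}$ stability (proved only after $\Pi_h$ exists) to define $\Pi_h$ by density.

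The genuine gap is in the final claim for $1<r<2$, which is precisely the range the paper needs ($r=p\in(1,2)$ in the application). Your compactness-and-contradiction argument does not close: writing $\psi_n=\psi_*+e_n$ with $\|e_n\|_{0,2}\to0$ and $e_n$ merely bounded in $W^{1,r}$, you must control $\|\Pi_{h_n}e_n\|_{0,2}$, but the only uniform bound available is $\|\Pi_{h_n}e_n\|_{1,r}\le C\|e_n\|_{1,r}\le C$; there is no uniform $L^2\to L^2$ bound for the varying projections $\Pi_{h_n}$ (they are not even defined on $L^2$), and the embedding $W^{1,r}\hookrightarrow W^{1,r'}$, $r'<r$, through which one might hope to factor, is not compact. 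So smallness of $e_n$ in $L^2$ gives nothing for $\Pi_{h_n}e_n$, and the claimed contradiction is not reached. The paper instead proves the statement quantitatively: it first establishes the interpolation estimate $\inf_{v_h^l}\|\varphi-v_h^l\|_{1,s}\le Ch^{2/s}\|\varphi\|_{2,2}$ for the conjugate exponent $s=r/(r-1)\in[2,\infty)$ (using $|T|\ge Ch^2$), then runs Aubin--Nitsche with the dual solution $\varphi\in H^2(\Gamma)$ and pairs $\|\psi-\Pi_h\psi\|_{0,2}^2=b(\varphi-v_h^l,\psi-\Pi_h\psi)\le C\|\varphi-v_h^l\|_{1,s}\|\psi-\Pi_h\psi\|_{1,r}$, invoking the already-proved $W^{1,r}$ stability and elliptic regularity to obtain $\|\psi-\Pi_h\psi\|_{0,2}\le Ch^{2/s}\|\psi\|_{1,r}$, which gives the uniform convergence (indeed a rate). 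Replacing your compactness step by this duality-with-conjugate-exponent estimate is what is needed to complete the proof.
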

\begin{proof}
One can see the Ritz projection $\Pi_h$ is well defined as this is equivalent to the invertibility of $S + \lambda M$, where $S,M$ are the usual mass and stiffness matrices
for lifted finite elements. The linearity of $\Pi_h$ is obvious. It is straightforward to show
that $\| \Pi_h \psi \|_{1,2} \leq C(\lambda) \| \psi \|_{1,2}$ for all $\psi \in W^{1,2}(\Gamma)$. From formula (4.16) in \cite{Pow17}, we learn that
$\| \Pi_h \psi \|_{1,\infty} \leq C \| \psi \|_{1,\infty}$. From the interpolation of Sobolev spaces, see e.g.~Corollary 5.13 in \cite{BenSha88}, we can deduce
that $\| \Pi_h \psi \|_{1,r} \leq C \| \psi \|_{1,r}$ for all $2 \leq r \leq \infty$. 
Observe that $b( \eta, \Pi_h \psi) = b(\Pi_h \eta, \Pi_h \psi) = b(\Pi_h \eta, \psi)$. Then, using Proposition \ref{prop:infSupsforb} with $q=r$ and $p=s$ for $1< s \leq 2$, it follows that
$$
	\gamma \| \Pi_h \psi \|_{1,s} \leq \sup_{\eta \in W^{1,r}(\Gamma)} \frac{b( \eta, \Pi_h \psi)}{\| \eta \|_{1,r}} 
	\leq \sup_{\eta \in W^{1,r}(\Gamma)} \frac{b( \Pi_h \eta, \psi)}{\| \eta \|_{1,r}}
	\leq C \| \psi \|_{1,s}
$$
so that we indeed have $\| \Pi_h \psi \|_{1,r} \leq C \| \psi \|_{1,r}$ for all $1 < r \leq \infty$. 

We next show that for $2 \leq s < \infty$,
$$
	\inf_{v_h^l \in \mathcal{S}_h^l} \| \psi - v_h^l\|_{1,s} \leq C h^{2/s} \| \psi \|_{2,2} \qquad \forall \psi \in H^2(\Gamma).
$$
Using the equivalence of the norms on the surfaces $\Gamma$ and $\Gamma_h$, see \cite{Dem09}, we can lift the usual interpolation estimates for the Lagrange interpolation operator $I_h$
onto $\Gamma$. We hence obtain,
\begin{align*}
 \| \psi - I_h^l \psi \|_{1,s} &= \left( \sum_{T \in \mathcal{T}_h^l} \| \psi - I_h^l \psi \|_{1,s,T}^s \right)^{1/s}
 \\
 &\leq C \left( \sum_{T \in \mathcal{T}_h^l } |T|^{1 - s/2} h^s \| \psi \|_{2,2,T}^s \right)^{1/s},
\end{align*}
where we have summed over all curved triangles $T$ of the lifted triangulation $\mathcal{T}_h^l$ of $\Gamma_h$. 
Under the assumptions on the triangulation $\mathcal{T}_h$ made in Section \ref{sub_section_SFEM}, it holds that $Ch^2 \leq |T|$.
Hence, $|T|^{1 - s/2} \leq C h^{2 - s}$ and
\begin{align*}
 \| \psi - I_h^l \psi \|_{1,s} \leq C h^{2/s} \left( \sum_{T \in \mathcal{T}_h^l} \| \psi \|_{2,2,T}^s  \right)^{1/s}.
\end{align*}
Using the estimate $(a^s + b^s) \leq (a^2 + b^2)^{s/2}$, which holds for all $a,b \geq 0$, we finally conclude that
\begin{equation}
	\| \psi - I_h^l \psi \|_{1,s} \leq C h^{2/s} \left( \sum_{T \in \mathcal{T}_h^l} \| \psi \|_{2,2,T}^2 \right)^{1/2} = C h^{2/s} \| \psi \|_{2,2}.
	\label{interpolation_estimate}
\end{equation}

Now, for $\psi \in W^{1,r}(\Gamma) \subset L^2(\Gamma)$ with $1 < r \leq \infty$, let $\varphi \in H^2(\Gamma)$ be the solution to
$$
	b(\varphi, v) = \int_{\Gamma} (\psi - \Pi_h \psi) v \; do \qquad \forall v \in H^1(\Gamma).
$$
It follows that
$$
	\| \psi - \Pi_h \psi \|_{0,2}^2 = b(\varphi, \psi - \Pi_h \psi) = b(\varphi - v_h^l, \psi - \Pi_h \psi),
$$
where $v_h^l \in \mathcal{S}_h^l$ is arbitrary. For $1 < r \leq 2$ and $s := r /(r - 1) \in [2,\infty)$, we obtain
\begin{align}
	\| \psi - \Pi_h \psi \|_{0,2}^2 &\leq C \inf_{v_h^l \in \mathcal{S}_h^l} \| \varphi - v_h^l \|_{1,s} \| \psi - \Pi_h \psi \|_{1,r} \leq C h^{2/s} \| \varphi \|_{2,2} \| \psi \|_{1,r}
	\nonumber \\
	&\leq C h^{2/s} \| \psi - \Pi_h \psi \|_{0,2} \| \psi \|_{1,r}.
	\label{interpolation_estimate_2}
\end{align}
On the other hand, for $2 \leq r \leq \infty$, we can conclude that
\begin{align*}
	\| \psi - \Pi_h \psi \|_{0,2}^2 &\leq C \inf_{v_h^l \in \mathcal{S}_h^l} \| \varphi - v_h^l \|_{1,2} \| \psi - \Pi_h \psi \|_{1,2}
	\leq Ch \| \varphi \|_{2,2} \| \psi \|_{1,2} 
	\nonumber \\
	&\leq Ch \| \psi - \Pi_h \psi \|_{0,2} \| \psi \|_{1,2} \leq Ch \| \psi - \Pi_h \psi \|_{0,2} \| \psi \|_{1,r}.
\end{align*}
Hence, for any $1 < r \leq \infty$,
$$
	\sup_{\psi \in W^{1,r}(\Gamma)} \frac{\| \psi - \Pi_h \psi\|_{0,2}}{\| \psi \|_{1,r}} \rightarrow 0 \;\; \textnormal{as} \;\; h \searrow 0.
$$
\end{proof}

\noindent 

For the choices $X = W^{1,q}(\Gamma)$ and $Y=W^{1,p}(\Gamma)$ with $1 < p \leq 2 \leq q < \infty$ such that $1/p + 1/q =1$ as well as $L = L^2(\Gamma)$, 
the uniform convergence condition \eqref{eq:YTildeSupConv} now follows by 
choosing $I_n := \Pi_{h_n}$ and setting $r=p$ in the lemma above. 
Furthermore, the conditions $\| \eta_n - \eta \|_X \rightarrow 0$ and $\| \xi_n - \xi \|_Y \rightarrow 0$ in Assumption \ref{def:liftedDiscreteSetup} hold for the following reasons.
First, $\eta$ and $\xi$ can be approximated sufficiently well by smooth functions $\tilde{\eta}$ and $\tilde{\xi}$, respectively.
Then, $\tilde{\eta}$ and $\tilde{\xi}$ are approximated by $I_h^l \tilde{\eta}$ and $I_h^l \tilde{\xi}$.
For $\tilde{\eta}$ this follows from (\ref{interpolation_estimate}) by choosing $s=q \geq 2$. 
For $\tilde{\xi}$ the estimate 
$\| \tilde{\xi} - I_h^l \tilde{\xi} \|_Y = \| \tilde{\xi} - I_h^l \tilde{\xi} \|_{1,p} \leq C \| \tilde{\xi} - I_h^l \tilde{\xi} \|_{1,2}
\leq C h \| \tilde{\xi} \|_{2,2}$ implies convergence.

\subsection{Discrete inf-sup condition}
To prove the discrete inf sup conditions we require Fortin's criterion. We use the following form of the criterion, which follows from \cite[Lemma 4.19]{ErnGue13}.
\begin{lemma} \label{lem:fortinsCriterion}
Suppose $V$ and $W$ are Banach spaces and $\tilde{b} \in \mathcal{L}(V \times W ; \mathbb{R})$ such that there exists $\beta > 0$ such that 
\[
\beta \leq \inf_{\xi \in W \setminus \left\{ 0\right\} } \sup_{\eta \in V \setminus \left\{ 0\right\}} \frac{\tilde{b}(\eta,\xi)}{\|\eta\|_V \|\xi\|_W}.
\]
Let $V_h \subset V$ and $W_h \subset W$ with $W_h$ reflexive.
If there exists $\delta > 0$ such that, for all $\eta \in V$, there exists $\Pi_h(\eta) \in V_h$  such that
\[
\forall \xi_h \in W_h, \quad \tilde{b}(\eta,\xi_h)=\tilde{b}(\Pi_h(\eta),\xi_h) \text{ and } \|\Pi_h(\eta)\|_V \leq \delta \|\eta\|_V,
\]
then 
\[
\frac{\beta}{\delta} \leq \inf_{\xi_h \in W_h \setminus \left\{ 0\right\} } \sup_{\eta_h \in V_h \setminus \left\{ 0\right\}} \frac{\tilde{b}(\eta_h,\xi_h)}{\|\eta_h\|_V \|\xi_h\|_W}.
\]
\end{lemma}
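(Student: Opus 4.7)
The plan is to reduce the discrete inf sup inequality to the continuous one by using the Fortin operator $\Pi_h$ as a norm-controlled substitute that preserves the duality with test functions in $W_h$.

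First I would fix an arbitrary $\xi_h \in W_h \setminus \{0\}$. Since $W_h \subset W$, the continuous inf sup hypothesis applied at $\xi_h$ yields
\[
\beta \|\xi_h\|_W \leq \sup_{\eta \in V \setminus \{0\}} \frac{\tilde{b}(\eta,\xi_h)}{\|\eta\|_V}.
\]
The goal is then to show that for every $\eta \in V \setminus \{0\}$,
\[
\frac{\tilde{b}(\eta,\xi_h)}{\|\eta\|_V} \leq \delta \sup_{\eta_h \in V_h \setminus \{0\}} \frac{\tilde{b}(\eta_h,\xi_h)}{\|\eta_h\|_V},
\]
after which taking the supremum on the left and dividing by $\delta$ gives the desired conclusion.

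For this key step I would apply the Fortin property to $\eta$, producing $\Pi_h(\eta) \in V_h$ satisfying both $\tilde{b}(\eta,\xi_h) = \tilde{b}(\Pi_h(\eta),\xi_h)$ (since $\xi_h \in W_h$) and $\|\Pi_h(\eta)\|_V \leq \delta \|\eta\|_V$. If $\Pi_h(\eta) \neq 0$, the stability bound gives $1/\|\eta\|_V \leq \delta/\|\Pi_h(\eta)\|_V$, so
\[
\frac{\tilde{b}(\eta,\xi_h)}{\|\eta\|_V} = \frac{\tilde{b}(\Pi_h(\eta),\xi_h)}{\|\eta\|_V} \leq \delta \, \frac{\tilde{b}(\Pi_h(\eta),\xi_h)}{\|\Pi_h(\eta)\|_V} \leq \delta \sup_{\eta_h \in V_h \setminus \{0\}} \frac{\tilde{b}(\eta_h,\xi_h)}{\|\eta_h\|_V}.
\]
If instead $\Pi_h(\eta) = 0$, then the preservation identity forces $\tilde{b}(\eta,\xi_h) = 0$, and the inequality is trivial. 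Combining this bound with the continuous inf sup estimate yields
\[
\beta \|\xi_h\|_W \leq \delta \sup_{\eta_h \in V_h \setminus \{0\}} \frac{\tilde{b}(\eta_h,\xi_h)}{\|\eta_h\|_V},
\]
and since $\xi_h \in W_h \setminus \{0\}$ was arbitrary, taking the infimum gives the claimed lower bound $\beta/\delta$.

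I do not expect a serious obstacle: the argument is essentially bookkeeping once the Fortin operator is in hand, and the only delicate point is the degenerate case $\Pi_h(\eta) = 0$, which is handled by observing that the preservation identity makes $\tilde{b}(\eta,\xi_h)$ vanish. Reflexivity of $W_h$ is not invoked in this direction of the argument; it is a harmless hypothesis inherited from the general Ern--Guermond framework.
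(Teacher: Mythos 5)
Your proof is correct, and it is essentially the argument the paper relies on: the paper does not prove this lemma itself but cites \cite[Lemma 4.19]{ErnGue13}, and your reduction of the discrete inf sup bound to the continuous one via the Fortin operator is exactly the standard argument behind that citation, including the accurate remark that reflexivity of $W_h$ is only relevant for the converse direction. The single implicit point is that the middle inequality in your chain, $\tilde{b}(\Pi_h(\eta),\xi_h)/\|\eta\|_V \leq \delta\,\tilde{b}(\Pi_h(\eta),\xi_h)/\|\Pi_h(\eta)\|_V$, presupposes $\tilde{b}(\Pi_h(\eta),\xi_h)\geq 0$; this is harmless because the discrete supremum is nonnegative ($V_h$ is a linear subspace, so $\eta_h$ and $-\eta_h$ both compete), which disposes of the case $\tilde{b}(\eta,\xi_h)\leq 0$ just as it does of your degenerate case $\Pi_h(\eta)=0$.
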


We can now prove the discrete inf sup conditions for $b(\cdot,\cdot)$.

\begin{lemma} \label{lem:discreteInfSupsForb}
Under the assumptions of Lemma \ref{lem:ritzProjection} (for $1< r < \infty$), there exist $\tilde{\beta}, \tilde{\gamma} > 0$, independent of $h$, such that 
\[
\tilde{\beta} \|\eta_h^l\|_X \leq \sup_{\xi_h \in Y_h} \frac{b(\eta_h^l,\xi_h^l)}{\|\xi_h^l\|_Y} \;\; \forall \eta_h^l \in X_h^l \quad \text{and} \quad 
\tilde{\gamma} \|\xi_h^l\|_Y \leq \sup_{\eta _h \in X_h} \frac{b(\eta_h^l,\xi_h^l)}{\|\eta_h^l\|_X} \;\; \forall \xi_h^l \in Y_h^l.
\]  
\end{lemma}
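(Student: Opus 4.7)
The plan is to deduce each discrete inf-sup inequality from the corresponding continuous one in Proposition~\ref{prop:infSupsforb} by invoking Fortin's criterion (Lemma~\ref{lem:fortinsCriterion}), using the Ritz projection of Lemma~\ref{lem:ritzProjection} as the Fortin operator. The key structural observations are that $b(\cdot,\cdot)$ is symmetric and that the Ritz projection satisfies both a Galerkin orthogonality relation and $W^{1,r}$-stability for every $1<r<\infty$.

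First I would establish the discrete inf-sup bounding $\|\xi_h^l\|_Y$. I apply Lemma~\ref{lem:fortinsCriterion} with $V=X=W^{1,q}(\Gamma)$, $W=Y=W^{1,p}(\Gamma)$, $V_h=X_h^l$, $W_h=Y_h^l$ and $\tilde b=b$. The continuous inf-sup hypothesis of Fortin follows directly from Proposition~\ref{prop:infSupsforb}. For the Fortin operator, I take $\Pi_h: W^{1,q}(\Gamma)\to\mathcal S_h^l$ from Lemma~\ref{lem:ritzProjection} (with the choice $r=q$); the defining property $b(\Pi_h\eta,v_h^l)=b(\eta,v_h^l)$ for every $v_h^l\in\mathcal S_h^l=Y_h^l$ gives the required commutation, and $\|\Pi_h\eta\|_X=\|\Pi_h\eta\|_{1,q}\leq C(q)\|\eta\|_{1,q}=C(q)\|\eta\|_X$ furnishes the uniform constant $\delta$. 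The conclusion of Fortin's criterion yields the second inequality with $\tilde\gamma:=\gamma/C(q)>0$, independent of $h$.

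Next I handle the first inf-sup by transposing the roles. Here I apply Lemma~\ref{lem:fortinsCriterion} with $V=Y$, $W=X$, $V_h=Y_h^l$, $W_h=X_h^l$ and the flipped bilinear form $\tilde b(\xi,\eta):=b(\eta,\xi)$. The continuous inf-sup hypothesis is again supplied by Proposition~\ref{prop:infSupsforb}, just the other one of its two statements. As Fortin operator I take the Ritz projection of Lemma~\ref{lem:ritzProjection} with $r=p$, restricted to $W^{1,p}(\Gamma)=Y$. The compatibility needed is $b(\eta_h^l,\xi)=b(\eta_h^l,\Pi_h\xi)$ for all $\eta_h^l\in X_h^l=\mathcal S_h^l$; since $b$ is symmetric, this is the same as $b(\Pi_h\xi,\eta_h^l)=b(\xi,\eta_h^l)$, which is the defining Galerkin orthogonality of $\Pi_h$. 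Stability $\|\Pi_h\xi\|_Y\leq C(p)\|\xi\|_Y$ is again provided by Lemma~\ref{lem:ritzProjection}. Fortin's conclusion then gives the first inequality with $\tilde\beta:=\beta/C(p)>0$.

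The only minor subtlety is that one has to notice that the Ritz projection of Lemma~\ref{lem:ritzProjection} is the common Fortin operator for both directions: this uses the symmetry of $b$ to interchange the argument in which the projection sits, and it uses the full range $1<r<\infty$ of the $W^{1,r}$-stability bound to cover both $r=q\in[2,\infty)$ and $r=p\in(1,2]$. No new analytical ingredient is required beyond Proposition~\ref{prop:infSupsforb} and Lemma~\ref{lem:ritzProjection}; the main step that carries the weight is really the $W^{1,r}$-stability already proved there.
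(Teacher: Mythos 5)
Your proposal is correct and follows essentially the same route as the paper: both deduce the discrete inf-sup inequalities from Proposition~\ref{prop:infSupsforb} via Fortin's criterion (Lemma~\ref{lem:fortinsCriterion}), using the Ritz projection of Lemma~\ref{lem:ritzProjection} as the Fortin operator for both directions, with the $W^{1,r}$-stability for $r=p$ and $r=q$ supplying the uniform constant. The only difference is that you spell out the role of the symmetry of $b$ and the exact space assignments, which the paper leaves implicit.
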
 
\begin{proof}

We apply Fortin's Criterion (Lemma \ref{lem:fortinsCriterion}). Setting $V=W^{1,p}(\Gamma)$, $W=W^{1,q}(\Gamma)$, $V_h=W_h=\mathcal{S}_h^l$ and using the Ritz projection $\Pi_h$ constructed above in Lemma \ref{lem:ritzProjection} proves the first inf sup inequality.
Similarly, setting $W=W^{1,p}(\Gamma)$ and $V=W^{1,q}(\Gamma)$ proves the reversed inf sup inequality.
\end{proof}

\section{Applications to second order splitting of fourth order surface PDEs}\label{PDEex}

\subsection{A standard fourth order problem}
In this section we apply the abstract theory to splitting a fairly general fourth order surface PDE. That is we consider solving a problem of the form 
\[
\DeltaG^2 u - \nablaG \cdot (P\mathcal{B}P\nablaG u) 
+\mathcal{C}u = \mathcal{F},
\]
posed over $\Gamma \subset \mathbb{R}^3$, a closed 2-dimensional hypersurface. This PDE results from minimising the functional
\[
\frac{1}{2} \int_\Gamma (\DeltaG u)^2 + (\mathcal{B}\nablaG u) \cdot \nablaG u + \mathcal{C}u^2 -  2 \mathcal{F} u \;do
\]
(for symmetric $\mathcal{B}$) over $H^2(\Gamma)$.
We make the following assumptions on $\mathcal{B}$ and $\mathcal{C}$ to ensure that the equation is well posed. 
\begin{assumption}
Let  $\mathcal{B}: \Gamma \rightarrow \mathbb{R}^{3\times 3}$, $\mathcal B$  be  measurable and symmetric such that  there exists $\lambda_M >0$ satisfying
\[
\|\mathcal{B}(x)\| \leq \lambda_M \; \forall x \in \Gamma.
\]
Let $\mathcal{C}: \Gamma \rightarrow \mathbb{R}$ be  measurable and there exist $\mathcal{C}_m,\mathcal{C}_M >0$ such that
\[
\mathcal{C}_m < \mathcal{C}(x) < \mathcal{C}_M \;\; \forall x \in \Gamma.
\]
There exists $\Lambda >0$ such that 
\[
\frac{\Lambda \lambda_M}{2} < \mathcal{C}_m \quad \text{ and } \quad \frac{\lambda_M}{2\Lambda} < 1. 
\] 
Finally we suppose $\mathcal{F} \in L^2(\Gamma)$.
\end{assumption}

\begin{remark} 
Note that in the above $P\nabla_\Gamma u$ can be replaced by $\nabla_\Gamma u$ since $P$ projects onto the tangent space and that
$\nabla_\Gamma\cdot(P\mathcal B P\nabla_\Gamma u)=\nabla_\Gamma\cdot(\mathcal B \nabla_\Gamma u) - H \mathcal{B}\nabla_\Gamma  u \cdot \nu$. Also we can write $\mathcal B$ rather than $P\mathcal B P$ provided for each $x\in \Gamma$, $\mathcal B:\mathcal T_x\rightarrow \mathcal T_x$. \end{remark}

The well-posedness of the PDE follows by consideration of the weak formulation of the problem.

\begin{problem} \label{prob:4oSplittingexample}
Find $u \in H^2(\Gamma)$ such that 
\[
\int_\Gamma \DeltaG u \DeltaG v + \mathcal{B} \nablaG u \cdot \nablaG v + \mathcal{C} uv \; do = \int_\Gamma \mathcal{F} v \;do \;\; \forall v \in H^2(\Gamma).
\]
\end{problem}

The assumptions we make on $\mathcal{B}$ and $\mathcal{C}$ ensure that the bilinear form is coercive on $H^2(\Gamma) \times H^2(\Gamma)$ and hence the problem is well posed by the Lax-Milgram theorem. Here we have chosen an $L^2$ right hand side, one could make a more general choice, however, we restrict to $L^2$ here as we will later show that in this case the numerical method attains the optimal order of convergence. 

We will now formulate an appropriate splitting method whose solution coincides with that of the fourth order problem. The coupled PDEs in distributional form are
\begin{align}
&-\Delta_\Gamma w+w -\nabla_\Gamma\cdot((P\mathcal BP-2\unit)\nabla_\Gamma u)+(\mathcal C-1)u=\mathcal F\\
& -\Delta_\Gamma u +u-w=0.
\end{align}
This motivates solving Problem \ref{prob:genSplittingProb} with the following definition of the data. Note that $\mathcal G=0$ for the above PDE system.

\begin{definition} \label{def:4oSplittingExample}
With respect to Definition \ref{def:genSplitSetup}, set $L=L^2(\Gamma)$ and $X=Y=H^1(\Gamma)$. Set the bilinear functionals  
\begin{align*}
c(u,v) :=& \int_{\Gamma} (\mathcal{B} -2\unit  )\nabla_{\Gamma} u \cdot \nabla_{\Gamma} v + \left(\mathcal{C}-1 \right) uv \;do, \\
b(u,v):=& \int_\Gamma \nablaG u \cdot \nablaG v + uv \;do ,  \\
m(w,v):=& \int_\Gamma wv \;do.
\end{align*}
Finally, take the data to be
\[
f := m(\mathcal{F},\cdot) \qquad \text{and} \qquad g:=m(\mathcal{G},\cdot),
\] 
with $\mathcal{F}, \mathcal{G} \in L^2(\Gamma)$.
\end{definition}   
We can now use the abstract theory to show well posedness for this problem.

\begin{proposition} \label{prop:4oWPandBound}
There exists a unique solution to Problem \ref{prob:genSplittingProb} with the spaces and functionals as chosen in Definition \ref{def:4oSplittingExample}. Moreover
for $\mathcal{B} \in W^{1, \infty}(\Gamma)$ we have the regularity result $u,w \in H^2(\Gamma)$ with the estimate
\[
\|u\|_{H^2(\Gamma)} + \|w\|_{H^2(\Gamma)} \leq C \left( \|\mathcal{F}\|_{L^2(\Gamma)} + \|\mathcal{G}\|_{L^2(\Gamma)} \right).
\]
Furthermore, when $\mathcal{G}=0$ the solution $u$ coincides with the solution of Problem \ref{prob:4oSplittingexample}.
\end{proposition}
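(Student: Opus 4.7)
The plan is to verify the hypotheses of the abstract existence theorem (Theorem \ref{thm:genSplittingWellPosed}), then bootstrap regularity via standard elliptic regularity on $\Gamma$, and finally derive the equivalence with the fourth-order formulation by integration by parts.

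First I would verify that all the conditions in Definition \ref{def:genSplitSetup} and Assumptions \ref{def:genSplitSetupASSUME}, \ref{def:liftedDiscreteSetup} are satisfied for the given choice of spaces and forms. Boundedness of $c, b, m$ follows from Cauchy--Schwarz together with the $L^\infty$ bounds on $\mathcal{B}$ and $\mathcal{C}$; symmetry and coercivity of $m$ on $L = L^2(\Gamma)$ are immediate. The inf sup conditions \eqref{eq:infSupsDefn} on $b$ follow from Proposition \ref{prop:infSupsforb} applied with $p = q = 2$, and the discrete versions from Lemma \ref{lem:discreteInfSupsForb}. The approximation hypothesis and the existence of the interpolant $I_n$ satisfying \eqref{eq:YTildeSupConv} follow from the Ritz projection constructed in Lemma \ref{lem:ritzProjection}, as explained in the discussion at the end of Section \ref{BilinearFormb}.

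The main new obstacle is to verify the coercivity hypothesis \eqref{eq:uwCoercivity}. Suppose $b(u,\xi) = m(w,\xi)$ for all $\xi \in H^1(\Gamma)$, i.e. $-\DeltaG u + u = w$ weakly on the closed surface. Since $w \in L^2(\Gamma)$, elliptic regularity on $\Gamma$ gives $u \in H^2(\Gamma)$ with the pointwise identity $w = -\DeltaG u + u$. Then, using integration by parts on the closed surface,
\begin{align*}
m(w,w) &= \int_\Gamma (-\DeltaG u + u)^2 \, do = \int_\Gamma (\DeltaG u)^2 + 2|\nablaG u|^2 + u^2 \, do,
\end{align*}
and therefore
\begin{equation*}
c(u,u) + m(w,w) = \int_\Gamma (\DeltaG u)^2 + \mathcal{B}\nablaG u \cdot \nablaG u + \mathcal{C}\, u^2 \, do.
\end{equation*}
This is precisely the energy of the original fourth-order problem. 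Using $-\int_\Gamma u\,\DeltaG u = \int_\Gamma |\nablaG u|^2$ and the weighted Young inequality $\int_\Gamma |\nablaG u|^2 \leq \frac{\Lambda}{2}\int_\Gamma u^2 + \frac{1}{2\Lambda}\int_\Gamma(\DeltaG u)^2$, together with $|\mathcal{B}\nablaG u \cdot \nablaG u| \leq \lambda_M |\nablaG u|^2$, yields
\begin{equation*}
c(u,u) + m(w,w) \geq \left(1 - \tfrac{\lambda_M}{2\Lambda}\right)\|\DeltaG u\|_{L^2}^2 + \left(\mathcal{C}_m - \tfrac{\Lambda \lambda_M}{2}\right)\|u\|_{L^2}^2,
\end{equation*}
and both coefficients are strictly positive by the hypotheses on $\Lambda, \lambda_M, \mathcal{C}_m$. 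Since $\|w\|_L^2 \leq 2\|\DeltaG u\|_{L^2}^2 + 2\|u\|_{L^2}^2$, this gives the required bound $\|w\|_L^2 \leq C(c(u,u) + m(w,w))$.

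Having applied Theorem \ref{thm:genSplittingWellPosed} to obtain existence, uniqueness and the $X \times Y$ a priori bound, I would next bootstrap regularity. The second equation reads $-\DeltaG u + u = w - \mathcal{G}$ weakly with $w - \mathcal{G} \in L^2(\Gamma)$, so elliptic regularity gives $u \in H^2(\Gamma)$ with $\|u\|_{H^2} \leq C(\|w\|_{L^2} + \|\mathcal{G}\|_{L^2})$. Substituting into the first equation and using $\mathcal{B} \in W^{1,\infty}(\Gamma)$, integration by parts shows that $b(\eta, w) = \int_\Gamma \Phi \eta\, do$ for some $\Phi \in L^2(\Gamma)$ whose $L^2$ norm is controlled by $\|u\|_{H^2} + \|\mathcal{F}\|_{L^2}$, so $-\DeltaG w + w = \Phi$ weakly and a second application of elliptic regularity gives $w \in H^2(\Gamma)$ with the stated bound. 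Finally, for the case $\mathcal{G} = 0$, I would take an arbitrary $v \in H^2(\Gamma)$, use $w = -\DeltaG u + u$, integrate by parts twice in $b(v,w)$ and once in $c(u,v)$ to obtain
\begin{equation*}
\int_\Gamma \DeltaG u\, \DeltaG v + \mathcal{B}\nablaG u \cdot \nablaG v + \mathcal{C}\, u v \, do = \int_\Gamma \mathcal{F} v \, do,
\end{equation*}
identifying $u$ as the unique solution of Problem \ref{prob:4oSplittingexample}.
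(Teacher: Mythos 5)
Your proposal is correct and follows essentially the same route as the paper: verify the abstract hypotheses (inf sup via Proposition \ref{prop:infSupsforb} with $p=q=2$, discrete conditions via Lemmas \ref{lem:ritzProjection} and \ref{lem:discreteInfSupsForb}), establish \eqref{eq:uwCoercivity} from $w=-\DeltaG u+u$ and the identity $c(u,u)+m(w,w)=\int_\Gamma(\DeltaG u)^2+\mathcal{B}\nablaG u\cdot\nablaG u+\mathcal{C}u^2\,do$, then apply Theorem \ref{thm:genSplittingWellPosed}, bootstrap $H^2$ regularity equation by equation, and recover Problem \ref{prob:4oSplittingexample} by integration by parts when $\mathcal{G}=0$. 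You merely spell out details the paper leaves terse, namely the $\Lambda$-weighted Young inequality exploiting the assumptions on $\lambda_M,\mathcal{C}_m$ and the order of the elliptic regularity bootstrap.
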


\begin{proof}
For the well posedness we apply Theorem \ref{thm:genSplittingWellPosed}. The assumptions required in Definition \ref{def:genSplitSetup} are straightforward to check, the inf sup conditions conditions are established in Proposition \ref{prop:infSupsforb} ($\lambda=1,p=q=2$). For the coercivity relation \eqref{eq:uwCoercivity} notice that
\[
b(u,\xi) = m(w,\xi) \;\forall \xi \in Y \implies u \in H^2(\Gamma) \text{ and } w = -\DeltaG u + u,
\] 
hence we deduce 
\[
c(u,u) + m(w,w) = \int_\Gamma (\DeltaG u )^2 + \mathcal{B} \nablaG u \cdot \nablaG u + \mathcal{C} u^2 \; do \geq C \int_\Gamma ( \DeltaG u)^2 + u^2 \; do \geq C\|w\|_{0,2}^2.
\]
For the assumptions made in Assumption \ref{def:liftedDiscreteSetup}, we take the lifted discrete spaces described in the previous section and the required discrete inf sup inequalities follow from Lemma \ref{lem:discreteInfSupsForb}. Finally, \eqref{eq:YTildeSupConv} holds by Lemma \ref{lem:ritzProjection}.
 
We thus have well posedness by Theorem \ref{thm:genSplittingWellPosed}. The regularity estimate follows by applying elliptic regularity to each of the equations of the system. Finally, when $\mathcal{G}=0$, by elliptic regularity we have
\[
w = -\DeltaG u + u.
\] 
It follows, for any $v \in H^2(\Gamma)$,
\begin{align*}
\int_\Gamma \mathcal{F}v \;do = c(u,v) + b(v,w) = \int_\Gamma \DeltaG u \DeltaG v + \mathcal{B} \nablaG u \cdot \nablaG v + \mathcal{C} uv \; do.
\end{align*}

\end{proof}

\subsection{Clifford torus problems}

We now look to apply the above theory to produce a splitting method for a pair of fourth order problems, 
based around the second variation of the Willmore functional, posed on a Clifford torus $\Gamma = T(R,R\sqrt{2})$. 
The problems are derived and motivated in Section 6.1.2 of \cite{EllFriHob17}. In order to state the problems we need the following definitions.

\begin{definition} \label{def:torusSplitSetup}
With respect to Definition \ref{def:genSplitSetup}, set the spaces to be $L=L^2(\Gamma)$, $X=W^{1,q}(\Gamma)$ and $Y=W^{1,p}(\Gamma)$, where $1< p <2 < q < \infty$ such that $1/p + 1/q =1$. Let $\delta,\rho>0$ be sufficiently small.    
We set the bilinear functionals to be as follows,
\begin{align*}
&c(u,v):= r_1(u,v) + r_2(u,v),~ b(u,v):= \int_\Gamma \nablaG u \cdot \nablaG v + uv \;do,~ m(v,w):= \int_\Gamma vw \;do\\
&\mbox{where}\\
&r_1(u,v) := \frac{1}{\rho} \sum_{k=1}^K \int_\Gamma u g_k \;do \int_\Gamma v g_k \;do + \chi_{\text{con}}\frac{1}{\delta} \sum_{k=1}^N u(X_k)v(X_k), ~r_2(u, v) := \int_{\Gamma} \nabla_{\Gamma} u \cdot\mathcal B\nabla_{\Gamma} v  + \mathcal C  uv \; do,  \\
&\mbox{and}\\
&\mathcal B:=  \left[\frac{3}{2}H^2 - 2|\mathcal{H}|^2 -2 \right]\unit - 2 H \mathcal{H}, \\& \mathcal C:=  - \frac{3}{2} H^2 | \mathcal{H} |^2 + 2 ( \nabla_{\Gamma} \nabla_{\Gamma} H) : \mathcal{H} + |\nabla_{\Gamma} H|^2 + 2 H Tr(\mathcal{H}^3) +\DeltaG |\mathcal{H}|^2 +|\mathcal{H}|^4 -1.\\
\end{align*}
Here the parameter $\chi_{\text{con}}$ takes one of 
two values leading to two problems. These are $\chi_ {\text{con}} =0$ or $\chi_{\text{con}}=1$ corresponding to the 
two cases of  point forces or point constraints  respectively. The functions $g_k$ are smooth and form a basis for the kernel of the second variation of the Willmore functional. Their specific form is given in Section 6.1.2 of \cite{EllFriHob17} but is not required here. Finally set $g=0$ and $f$ such that
\[
\langle f, v \rangle = \sum_{k=1}^N \beta_k v(X_k) \quad \text{ or } \quad \langle f, v \rangle = \frac{1}{\delta}\sum_{k=1}^N \alpha_k v(X_k),
\] 
for the point forces,  $\chi_ {\text{con}} =0$, or point constraints, $\chi_ {\text{con}} =1$, problem respectively.
\end{definition}

\begin{remark} The variational problem for the Clifford torus is to minimise over $H^2(\Gamma)$ the functional
$$\frac{1}{2}a(v,v)+\frac{1}{2} c(v,v) 
-\langle f,v \rangle$$
where
$$a(v,v):=(-\Delta_\Gamma v+v,-\Delta_\Gamma v+v) .$$
The  terms involving $\rho$ and $\delta$ in $r_1(\cdot,\cdot)$ are penalty terms which, respectively,  enforce orthogonality to the $\{g_k\}_{k=1}^K$ and point displacement constraints at $\{X_k\}_{k=1}^N$.
\end{remark}

We will now check that all of the assumptions required in Definition \ref{def:genSplitSetup} and Assumption \ref{def:genSplitSetupASSUME} hold for the choices made above in 
Definition \ref{def:torusSplitSetup}. Most of these are straightforward, however the inf sup conditions require the Proposition \ref{prop:infSupsforb}.
Now we check the remaining assumptions required.
\begin{lemma}
The assumptions made in Definition \ref{def:genSplitSetup} and Assumption \ref{def:genSplitSetupASSUME} hold for the choices made for the spaces and functionals in Definition \ref{def:torusSplitSetup}. 
\end{lemma}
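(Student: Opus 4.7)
The plan is to verify each required item in turn, deferring the only substantive point, the coercivity condition \eqref{eq:uwCoercivity}, to a reduction argument based on elliptic regularity.

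First I would check the structural properties. The spaces $X=W^{1,q}(\Gamma)$ and $Y=W^{1,p}(\Gamma)$ are reflexive for $1<p,q<\infty$, $L=L^2(\Gamma)$ is a Hilbert space, and Sobolev embedding on the compact two-dimensional surface $\Gamma$ gives $W^{1,p}(\Gamma)\hookrightarrow L^2(\Gamma)$ continuously (for $p>1$ one has $W^{1,p}\hookrightarrow L^{2p/(2-p)}\hookrightarrow L^2$). Since $q>2$, one also has $W^{1,q}(\Gamma)\hookrightarrow C(\Gamma)$, so the point evaluations $u(X_k)$ in $r_1$ are well defined and continuous on $X$. Boundedness of $b$ and of $m$, and the symmetry and coercivity of $m$, are immediate from Hölder/Cauchy-Schwarz. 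For $c=r_1+r_2$, the $r_1$ term is bounded because point evaluations are continuous on $X$ and each $g_k$ is smooth, while $r_2$ is bounded because the coefficients $\mathcal B,\mathcal C$ are smooth on the Clifford torus so $|r_2(u,v)|\le C\|u\|_{1,2}\|v\|_{1,2}\le C\|u\|_X\|v\|_X$.

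The two inf--sup conditions in Assumption \ref{def:genSplitSetupASSUME} are a direct application of Proposition \ref{prop:infSupsforb} with $\lambda=1$ and the chosen $p,q$; no new work is needed.

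The main work is the coercivity relation \eqref{eq:uwCoercivity}. Suppose $b(u,\xi)=m(w,\xi)$ for all $\xi\in Y=W^{1,p}(\Gamma)$. Since $p\le 2$, one has $H^1(\Gamma)\subset W^{1,p}(\Gamma)$, so the identity holds for all $\xi\in H^1(\Gamma)$, which is the weak form of $-\Delta_\Gamma u+u=w$. Because $w\in Y\hookrightarrow L^2(\Gamma)$, elliptic regularity on the smooth closed surface $\Gamma$ gives $u\in H^2(\Gamma)$ and the pointwise identity $w=-\Delta_\Gamma u+u$. Therefore
\[
m(w,w)=\|-\Delta_\Gamma u+u\|_{L^2}^2=a(u,u),
\]
where $a(\cdot,\cdot)$ is the bilinear form from the remark, and the target inequality becomes
\[
C\|w\|_{L^2}^2\le a(u,u)+r_1(u,u)+r_2(u,u)\qquad\text{for all }u\in H^2(\Gamma).
\]
Since $w=-\Delta_\Gamma u+u$, standard $H^2$ elliptic regularity yields $\|w\|_{L^2}\sim\|u\|_{H^2}$, so this inequality is equivalent to coercivity of $a+c$ on $H^2(\Gamma)$ in the $H^2$-norm.

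The hard part is the last step: proving coercivity of $a+c$ on all of $H^2(\Gamma)$. The form $a+r_2$ is the second variation of the Willmore energy on the Clifford torus and is positive semi-definite with a finite-dimensional kernel spanned by $\{g_k\}_{k=1}^K$ (coming from the conformal/Möbius invariance), as established in \cite{EllFriHob17}. On the $a+r_2$-orthogonal complement of $\mathrm{span}\{g_k\}$ the form is coercive by a standard spectral gap argument. On $\mathrm{span}\{g_k\}$, the $g_k$-penalty part of $r_1$ contributes $\frac{1}{\rho}\sum_k(\int_\Gamma u\,g_k)^2$, which is positive definite on this finite-dimensional space because the $g_k$ are linearly independent (in the case $\chi_{\text{con}}=1$, the point-evaluation penalty adds a further non-negative contribution that only helps). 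A direct orthogonal decomposition argument in $H^2(\Gamma)$ then yields coercivity of $a+c$ on the whole space, completing \eqref{eq:uwCoercivity}. This last coercivity is precisely the well-posedness of the constrained Willmore problem worked out in \cite[Section 6.1.2]{EllFriHob17}, so at the level of the present proof it may be invoked as a citation rather than reproved.
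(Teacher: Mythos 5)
Your proposal is correct and follows essentially the same route as the paper: verify the structural and boundedness assumptions, invoke Proposition~\ref{prop:infSupsforb} for the two inf--sup conditions, and reduce the coercivity condition \eqref{eq:uwCoercivity} via elliptic regularity (so $u\in H^2(\Gamma)$, $w=-\Delta_\Gamma u+u$, $m(w,w)=a(u,u)$) to the $H^2$-coercivity of $a+c$, which both you and the paper take from Section 6.1.2 of \cite{EllFriHob17}, valid for sufficiently small $\delta,\rho$. Your extra sketch of the kernel/penalty decomposition is consistent with that citation but not needed, since in the end you invoke the same reference the paper does.
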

\begin{proof}
The space $L^2(\Gamma)$ is a Hilbert Space and $W^{1,r}(\Gamma)$ is a reflexive Banach space for any $1<r<\infty$. The embedding $W^{1,p}(\Gamma) \subset L^2(\Gamma)$ is continuous by the Sobolev embedding theorem.

Having proven the inf sup inequalities in Proposition \ref{prop:infSupsforb}, the remaining conditions on $c,r,b$ and $m$ are straightforward. To obtain the coercivity relation \eqref{eq:uwCoercivity}, in this case from elliptic regularity
\[
b(u,\xi)=m(w,\xi) \;\forall \xi \in Y \implies  w=-\DeltaG u + u \text{ and } u \in H^2(\Gamma).
\]   
It follows 
\[
c(u,u) + m(w,w) = \int_\Gamma (\DeltaG u)^2 +2 |\nablaG u|^2 + u^2 +c(u,u) \geq C \|u\|^2_{2,2} \geq C\|w\|_{0,2}^2.
\]
The $H^2$ coercivity result used here holds for sufficiently small $\delta,\rho$, see in Proposition 5.2 and Section 6.1.2 of \cite{EllFriHob17}. 

Finally, the choices for $f$ and $g$ lie in the required dual spaces. For $f$ this follows from the continuous embedding $W^{1,q}(\Gamma) \subset C^0(\Gamma)$. 
\end{proof}

The splitting method is thus well posed, this follows by applying the abstract theory.

\begin{corollary} \label{cor:torusWPandReg}
There exists a unique solution to Problem \ref{prob:genSplittingProb} with the spaces and functionals as chosen in Definition \ref{def:torusSplitSetup}. Moreover we have the additional regularity $u \in W^{3,p}(\Gamma)$ for all $1<p<2$ and the regularity estimate
\[
\|u\|_{3,p} \leq C(p)\|w\|_{1,p}.
\]
\end{corollary}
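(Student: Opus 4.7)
The plan is to split the claim into two parts: well-posedness, which is immediate from the abstract theory, and extra regularity, which comes from reading the second equation of the system as a standard second-order elliptic equation on $\Gamma$ with a $W^{1,p}$ right-hand side.

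For well-posedness, the preceding lemma has verified Assumption \ref{def:genSplitSetupASSUME} and (via Lemmas \ref{lem:discreteInfSupsForb} and \ref{lem:ritzProjection}) Assumption \ref{def:liftedDiscreteSetup} for the choices of spaces and bilinear forms in Definition \ref{def:torusSplitSetup}. Theorem \ref{thm:genSplittingWellPosed} therefore yields a unique $(u,w) \in W^{1,q}(\Gamma) \times W^{1,p}(\Gamma)$ satisfying Problem \ref{prob:genSplittingProb} together with the stability bound $\|u\|_X + \|w\|_Y \leq C(\|f\|_{X^*} + \|g\|_{Y^*})$.

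For the additional regularity I would start from the second equation. Since $g = 0$ in Definition \ref{def:torusSplitSetup},
\begin{equation*}
\int_\Gamma \nablaG u \cdot \nablaG \xi + u\xi \,do = \int_\Gamma w \xi \,do \qquad \forall \xi \in W^{1,p}(\Gamma).
\end{equation*}
Because $\Gamma$ is compact, $H^1(\Gamma) \subset W^{1,p}(\Gamma)$ for $1 < p \leq 2$, so the identity extends in particular to all $\xi \in H^1(\Gamma)$. Hence $u$ is a weak solution on the smooth closed surface $\Gamma$ of the shifted Laplace--Beltrami equation $-\DeltaG u + u = w$ with $w \in W^{1,p}(\Gamma)$. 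Standard $L^p$ elliptic regularity for $-\DeltaG + I$ on a smooth closed surface (obtained by localizing via a partition of unity and applying Calder\'on--Zygmund estimates in charts) then gives $u \in W^{3,p}(\Gamma)$ together with
\begin{equation*}
\|u\|_{3,p} \leq C(p) \bigl( \|w\|_{1,p} + \|u\|_{0,p} \bigr).
\end{equation*}
The lower-order term is absorbed because $-\DeltaG + I$ has trivial kernel, so the inverse is bounded $L^p(\Gamma) \to L^p(\Gamma)$, yielding $\|u\|_{0,p} \leq C(p) \|w\|_{0,p} \leq C(p) \|w\|_{1,p}$ and hence the claimed estimate $\|u\|_{3,p} \leq C(p)\|w\|_{1,p}$.

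The only delicate step is the interior/global $W^{3,p}$-estimate on the closed surface, which is classical but not proved in the present paper; I would simply cite it rather than reprove it. Everything else is a mechanical application of results already established: the abstract Theorem \ref{thm:genSplittingWellPosed} for well-posedness, and the containment $H^1(\Gamma) \subset W^{1,p}(\Gamma)$ together with $g=0$ to reveal the hidden second-order PDE for $u$.
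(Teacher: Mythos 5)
Your proposal is correct and follows essentially the same route as the paper: well-posedness is obtained by applying Theorem \ref{thm:genSplittingWellPosed} once the assumptions have been verified (as in the lemma preceding the corollary), and the $W^{3,p}$ regularity and estimate come from $L^p$ elliptic regularity applied to the second equation $-\DeltaG u + u = w$ with $g=0$. The paper states this in one line; your version merely spells out the (correct) details of testing with $H^1(\Gamma)\subset W^{1,p}(\Gamma)$ and absorbing the lower-order term via invertibility of $-\DeltaG + I$.
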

\begin{proof}
We have proven that the assumptions made in Assumptions \ref{def:genSplitSetup} and \ref{def:liftedDiscreteSetup} hold in this case, thus we may apply Theorem \ref{thm:genSplittingWellPosed} to show well posedness. The regularity result follows by elliptic regularity, applied to the second equation of the system.
\end{proof}


\section{Second order splitting SFEM for fourth order surface PDEs}\label{PDEexDiscrete}

\subsection{Standard fourth order problem}
We now consider the standard fourth order problem and use the abstract theory to produce a convergent finite element method. Using $P^1$ finite elements, we will achieve optimal error bounds for both $u$ and $w$ of order $h$ convergence in the $H^1$ norm and order $h^2$ in the $L^2$ norm.  

\begin{definition} \label{def:4oProbFemSetUp}
In the context of Definition \ref{def:genFemSetUp}, set $X_h=Y_h=\mathcal{S}_h$. Take $l_h^X$ and $l_h^Y$ to be the standard lift operator, see Section \ref{sub_section_SFEM}.
Set the bilinear functionals to be
\begin{align*}
c_h(u_h,v_h) :=& \int_{\Gamma_h} ((P\mathcal{B}P)^{-l} -2\unit  )\nabla_{\Gamma_h} u_h \cdot \nabla_{\Gamma_h} v_h + \left(\mathcal{C}^{-l}-1 \right) u_hv_h \;do_h, \\
b_h(u_h,v_h):=& \int_{\Gamma_h} \nabla_{\Gamma_h} u_h \cdot \nabla_{\Gamma_h} v_h + u_hv_h \;do_h ,  \\
m_h(w_h,v_h):=& \int_{\Gamma_h} w_h v_h \;do_h.
\end{align*}
Here, $do_h$ denotes the induced volume measure on $\Gamma_h$.
Finally, set 
\[
 f_h := m_h(\mathcal{F}^{-l},\cdot) \quad \text{and} \quad g_h := m_h(\mathcal{G}^{-l},\cdot).
\] 
\end{definition}
We can now prove convergence for this method.

\begin{corollary} \label{cor:4oProbConvBounds}
With the spaces and functionals chosen in Definition \ref{def:4oSplittingExample} and Definition \ref{def:4oProbFemSetUp}, 
there exists $h_0 > 0$ such that for all $0<h<h_0$ there exists a unique solution $(u_h,w_h) \in X_h \times Y_h$ to the problem 
\begin{align*}
c_h(u_h,\eta_h) + b_h(\eta_h,w_h) &= \langle f_h, \eta_h \rangle \quad \forall \eta_h \in X_h, \\
b_h(u_h,\xi_h) - m_h(w_h,\xi_h) &= \langle g_h, \xi_h \rangle \quad \forall \xi_h \in Y_h.
\end{align*}
Moreover there exists $C>0$, independent of $h$, such that 
\[
\|u-u_h^l\|_{i,2} + \|w-w_h^l\|_{i,2} \leq C h^{2-i}(\|\mathcal{F}\|_{0,2} + \|\mathcal{G}\|_{0,2}),
\] 
for each $i=0,1$ and for all $0<h<h_0$.
\end{corollary}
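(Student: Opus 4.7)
The plan is to reduce the claim to the abstract framework of Section 3. I would verify the hypotheses of Corollary \ref{cor:abstractOrderOfConv} to obtain the $H^1$ estimate ($i=1$), then apply Proposition \ref{prop:abstractDualityBound} in an Aubin--Nitsche style duality argument to obtain the $L^2$ estimate ($i=0$). The regularity needed for both arguments is already supplied by Proposition \ref{prop:4oWPandBound}.

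First I would check that all ingredients of Definition \ref{def:genFemSetUp} hold with $k=2$. The lift operators $l_h^X,l_h^Y$ are the standard Dziuk--Elliott lift. The lifted spaces $X_h^l = Y_h^l = \mathcal{S}_h^l$ satisfy Assumption \ref{def:liftedDiscreteSetup} thanks to Lemma \ref{lem:discreteInfSupsForb} applied at $p=q=2$ (giving the discrete inf--sup conditions) and to Lemma \ref{lem:ritzProjection} with $I_n := \Pi_{h_n}$ (giving the Fortin-type projection and the uniform $L^2$-convergence property \eqref{eq:YTildeSupConv}). The geometric approximation estimates
\[
|c(\eta_h^l,\xi_h^l) - c_h(\eta_h,\xi_h)| \leq Ch^{2}\|\eta_h^l\|_{H^1}\|\xi_h^l\|_{H^1},
\]
and the analogous bounds for $b,m$ and for the data $f_h,g_h$, are obtained from the standard comparison between $do$ and $do_h$ (a difference of order $h^2$) and between the tangential projection $P$ and its discrete counterpart; these estimates are classical and essentially amount to a bookkeeping argument along the lines of \cite{DziEll13}.

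Next I would invoke Corollary \ref{cor:abstractOrderOfConv} with $\tilde{X} = \tilde{Y} = H^2(\Gamma)$. By Proposition \ref{prop:4oWPandBound} the solution satisfies $(u,w) \in H^2(\Gamma)\times H^2(\Gamma)$ with $\|u\|_{H^2}+\|w\|_{H^2} \leq C(\|\mathcal{F}\|_{0,2}+\|\mathcal{G}\|_{0,2})$, and the lifted $P^1$ interpolation bound $\|\phi - I_h^l\phi\|_{H^1}\leq Ch\|\phi\|_{H^2}$ gives $\alpha = 1$. Since $\min\{\alpha,k\} = 1$, this yields the $i=1$ estimate with the correct right-hand side.

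For the $L^2$ estimate I would apply Proposition \ref{prop:abstractDualityBound} with $H = J = L^2(\Gamma)$. Symmetry of $c(\cdot,\cdot)$ follows from the symmetry of $\mathcal B$. The dual problem has an $L^2$ right-hand side, so another application of Proposition \ref{prop:4oWPandBound} places the dual solution $(\psi,\varphi)$ in $H^2(\Gamma)\times H^2(\Gamma)$ with $\|\psi\|_{H^2}+\|\varphi\|_{H^2}\leq \hat{C}(\|u-u_h^l\|_{0,2}+\|w-w_h^l\|_{0,2})$, verifying assumption \eqref{eq:dualityRegAssumption} with $\hat{X}=\hat{Y}=H^2(\Gamma)$. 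The same interpolation estimate gives the approximation exponent $\beta=1$, and since $\alpha+\beta = 2 = k$ the proposition delivers the desired $h^2$ rate in $L^2$.

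The main obstacle is the verification of the geometric perturbation estimates with exponent $k=2$; obtaining less than $h^2$ at any one of the five places ($c,b,m,f,g$) would immediately degrade the $L^2$ rate, since the duality argument is sharp in this regard ($\alpha+\beta=k$). Once these geometric bounds are in hand, both estimates follow by a direct application of the abstract machinery, with no further analytic work specific to the PDE required.
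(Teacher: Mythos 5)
Your proposal is correct and follows essentially the same route as the paper: verify Definition \ref{def:genFemSetUp} with $k=2$ via the standard geometric perturbation estimates (as in \cite{DziEll13}), apply Corollary \ref{cor:abstractOrderOfConv} with $\tilde{X}=\tilde{Y}=H^2(\Gamma)$ and $\alpha=1$ for the $i=1$ case, and apply Proposition \ref{prop:abstractDualityBound} with $H=J=L^2(\Gamma)$, $\hat{X}=\hat{Y}=H^2(\Gamma)$, $\beta=1$ together with the $H^2$ regularity from Proposition \ref{prop:4oWPandBound} for the $i=0$ case. Your explicit check of the symmetry of $c(\cdot,\cdot)$ is a point the paper leaves implicit, but otherwise the arguments coincide.
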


\begin{proof}
For the $i=1$ case we apply Corollary \ref{cor:abstractOrderOfConv}, the assumptions on the lift operators and 
bilinear functionals made in Definition \ref{def:genFemSetUp} hold by the same arguments as for the Clifford torus application, see the proof of Corollary \ref{cor:oneMinusEpsBounds}. 
For the approximation to the data follow the proof of Lemma 4.7 in \cite{DziEll13},
\[
|m(\mathcal{F},\eta_h^l) - m_h(\mathcal{F}^{-l},\eta_h)| \leq C h^2|m(\mathcal{F},\eta_h^l)| \leq C h^2\|m(\mathcal{F},\cdot)\|_{X^*}\|\eta_h^l\|_{X},
\]
an identical argument holds for $\mathcal{G}$. Set the spaces $\tilde{X}=\tilde{Y}=H^2(\Gamma)$ and $\alpha=1$, the 
approximation assumption in Corollary \ref{cor:abstractOrderOfConv} holds by the standard interpolation estimates (see e.g. \cite[Lemma 4.3]{DziEll13}). It follows
\[
\|u-u_h^l\|_{1,2} + \|w-w_h^l\|_{1,2} \leq Ch\left(\|u\|_{2,2} + \|w\|_{2,2} + \|m(\mathcal{F},\cdot)\|_{-1,2} + \|m(\mathcal{G},\cdot)\|_{-1,2} \right).
\]
Hence by the regularity estimate in Proposition \ref{prop:4oWPandBound} we have 
\[
\|u-u_h^l\|_{1,2} + \|w-w_h^l\|_{1,2} \leq Ch\left(\|\mathcal{F}\|_{0,2} + \|\mathcal{G}\|_{0,2} \right).
\]

For the $i=0$ result we use Proposition \ref{prop:abstractDualityBound}, setting $H=J=L^2(\Gamma)$ and $\hat{X}=\hat{Y}=H^2(\Gamma)$. 
The approximation condition \eqref{eq:dualityApproxAssumption} holds for $\beta=1$ by the standard interpolation estimates. 
The regularity result \eqref{eq:dualityRegAssumption} holds by elliptic regularity applied to the dual problem. It follows
\[
\|u-u_h^l\|_{0,2} + \|w-w_h^l\|_{0,2} \leq Ch^2\left(\|\mathcal{F}\|_{0,2} + \|\mathcal{G}\|_{0,2} \right).
\]
\end{proof}

\subsection{Clifford torus problems}
We now apply the abstract finite element method to produce a convergent finite element approximation for the Clifford torus problems.

\begin{definition} \label{def:torusFemSetUp}
In the context of Definition \ref{def:genFemSetUp}, set $X_h=Y_h=\mathcal{S}_h$. Take $l_h^X$ and $l_h^Y$ to be the standard lift operator, see Section \ref{sub_section_SFEM}.
Set the bilinear functionals to be
\begin{align*}
&c_h(u_h,v_h) := \frac{1}{\rho} \sum_{k=1}^K \int_{\Gamma_h} u_h g_k \circ p \;do_h \int_{\Gamma_h} v_h g_k \circ p \;do_h + \chi_{\text{con}}\frac{1}{\delta} \sum_{k=1}^N u_h(p^{-1}(X_k))v_h(p^{-1}(X_k))  \\
& +  \int_{\Gamma_h} \nabla_{\Gamma_h} u_h \cdot \left( \left[\frac{3}{2}H^2 - 2|\mathcal{H}|^2 -2 \right]\unit - 2 H \mathcal{H} \right) \circ p \nabla_{\Gamma_h} v_h 
	\\  
	 & + u_hv_h \left( - \frac{3}{2} H^2 | \mathcal{H} |^2 + 2 ( \nabla_{\Gamma} \nabla_{\Gamma} H) : \mathcal{H} + |\nabla_{\Gamma} H|^2 + 2 H Tr(\mathcal{H}^3) +\DeltaG |\mathcal{H}|^2 +|\mathcal{H}|^4 -1\right) \circ p \; do_h,  \\
&b_h(u_h,v_h):= \int_{\Gamma_h} \nabla_{\Gamma_h} u_h \cdot \nabla_{\Gamma_h} v_h + u_hv_h \;do_h ,  \\
&m_h(w_h,v_h):= \int_{\Gamma_h} w_hv_h \;do_h.
\end{align*}
Finally, set $g_h = 0$ and  $f_h$ such that
\[
\langle f_h, v_h \rangle = \sum_{k=1}^N \beta_k v_h(p^{-1}(X_k)) \quad \text{ or } \quad \langle f_h, v_h \rangle = \frac{1}{\delta}\sum_{k=1}^N \alpha_k v_h(p^{-1}(X_k)).
\] 
\end{definition}
We shall check the assumptions made in Definition \ref{def:genFemSetUp} hold in this context and produce the following convergence result.

\begin{corollary} \label{cor:oneMinusEpsBounds}
With the spaces and functionals chosen in Definition \ref{def:torusSplitSetup} and Definition \ref{def:torusFemSetUp}, there exists $h_0 > 0$ such that for all $0<h<h_0$ there exists a unique solution $(u_h,w_h) \in X_h \times Y_h$ to the problem 
\begin{align*}
c_h(u_h,\eta_h) + b_h(\eta_h,w_h) &= \langle f_h, \eta_h \rangle \quad \forall \eta_h \in X_h, \\
b_h(u_h,\xi_h) - m_h(w_h,\xi_h) &= 0 \quad \forall \xi_h \in Y_h.
\end{align*}
Moreover, for any $1 < p < 2 < q < \infty$ with $1/p + 1/q = 1$ there exists $C(q) > 0$, independent of $h$, such that 
\[
\|u-u_h^l\|_{1,2} + \|w-w_h^l\|_{0,2} \leq C(q) h^{2/q} \|f\|_{X^*},
\] 
for all $0<h<h_0$.
\end{corollary}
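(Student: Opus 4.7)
First I would verify the hypotheses of Definition \ref{def:genFemSetUp} for the choices in Definition \ref{def:torusFemSetUp}. With the standard lift of Section \ref{sub_section_SFEM}, Lemma \ref{lem:discreteInfSupsForb} supplies the discrete inf-sup conditions required in Assumption \ref{def:liftedDiscreteSetup}, while the uniform convergence property \eqref{eq:YTildeSupConv} follows from Lemma \ref{lem:ritzProjection} on setting $I_{h_n} := \Pi_{h_n}$ with $r = p$. Bilinear-form consistency errors $|c - c_h|$, $|b - b_h|$, $|m - m_h|$ are $O(h^2)$ by the standard geometric perturbation estimates of \cite{Dem09, DziEll13}, giving $k = 2$. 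The right-hand side is matched exactly: since $p(p^{-1}(X_k)) = X_k$, direct comparison forces $\langle f_h, \eta_h \rangle = \langle f, \eta_h^l \rangle$ for every $\eta_h \in X_h$, so the data-approximation error vanishes. Well-posedness of $(u_h, w_h)$ for $h$ sufficiently small is then immediate from Theorem \ref{thm:genFemInfBound}.

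For the primal rate, by Corollary \ref{cor:torusWPandReg} together with the two-dimensional Sobolev embedding $W^{3,p}(\Gamma) \hookrightarrow W^{2, p^*}(\Gamma) \hookrightarrow H^2(\Gamma)$ (with $p^* = 2p/(2-p) > 2$, valid for $1 < p < 2$), we have $u \in H^2(\Gamma)$ with $\|u\|_{2,2} \leq C\|u\|_{3,p} \leq C\|w\|_{1,p} \leq C\|f\|_{X^*}$. Taking $\eta_h := I_h u$, the interpolation estimate \eqref{interpolation_estimate} with $s = q \geq 2$ delivers
\[
\|u - I_h^l u\|_{1,q} \leq C h^{2/q} \|u\|_{2,2} \leq C h^{2/q} \|f\|_{X^*},
\]
and the embedding $W^{1,q}(\Gamma) \hookrightarrow H^1(\Gamma)$ (valid since $q > 2$) will convert any $X$-norm control of $u - u_h^l$ into the required $H^1$-bound.

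The main obstacle is the $L^2$-bound on $w$: since $u \in W^{3,p}$ yields only $w = -\Delta_\Gamma u + u \in W^{1,p}$, no standard Lagrange interpolant supplies a rate for $\|w - \xi_h^l\|_Y$. My plan is to choose $\xi_h^l := \Pi_h w$, the Ritz projection from Lemma \ref{lem:ritzProjection}, and then retrace the proof of Theorem \ref{thm:genFemInfBound}. Because $b$ is symmetric and $\Pi_h$ is defined with respect to $b$, the identity $b(v_h^l, w - \Pi_h w) = 0$ for all $v_h^l \in \mathcal{S}_h^l$ annihilates the one contribution that would otherwise be controlled only by $\|w - \xi_h^l\|_Y$; everywhere else in the derivation, $w - \xi_h^l$ enters through $m(\cdot, \cdot)$ and may be estimated in the $L^2$-norm. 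The quantity $\mathbb{B}$ in \eqref{eq:bigBracketB} may therefore be replaced by $\mathbb{B}' := \|u - \eta_h^l\|_X + \|w - \xi_h^l\|_L + h^k(\ldots)$, and \eqref{interpolation_estimate_2} with $r = p$, $s = q$ gives
\[
\|w - \Pi_h w\|_{0,2} \leq C h^{2/q} \|w\|_{1,p} \leq C h^{2/q} \|f\|_{X^*},
\]
so that $\mathbb{B}' \leq C h^{2/q} \|f\|_{X^*}$. The coercivity step underlying \eqref{eq:liftedDiscreteCoercivity} then yields $\|w_h^l - \xi_h^l\|_L \leq C \mathbb{B}'$, while the discrete inf-sup for $b$ gives $\|u_h^l - \eta_h^l\|_X \leq C \mathbb{B}'$; two triangle inequalities and the Sobolev embedding mentioned above produce the claim. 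The hardest step is this bookkeeping: one must carefully check that each occurrence of $\|w - \xi_h^l\|_Y$ in the estimates \eqref{eq:genSumBoundWithL2}--\eqref{eq:starter3rdLine} of Theorem \ref{thm:genFemInfBound} is either annihilated by Ritz orthogonality or can be replaced by the corresponding $L^2$-expression.
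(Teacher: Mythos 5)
Your proposal is correct in substance, but it takes a genuinely different route from the paper. The paper does not retrace Theorem \ref{thm:genFemInfBound} with a clever choice of $\xi_h$; instead it runs a duality argument in the spirit of Proposition \ref{prop:abstractDualityBound}, introducing the dual pair $(\psi,\varphi)$ solving Problem \ref{prob:genSplittingProb} with right-hand sides $\eta\mapsto\langle u-u_h^l,\eta\rangle_{H^1}$ and $\xi\mapsto\langle w-w_h^l,\xi\rangle_{L^2}$, and then estimating the resulting consistency terms by hand: the pairing involving $\varphi$ (which has only $W^{1,p}$ regularity) is handled by testing the discrete second equation with $(\Pi_h\varphi)^{-l}$ and invoking the Ritz $L^2$-estimate \eqref{interpolation_estimate_2} with $r=p$, while the pairing involving $\psi$ uses the Lagrange interpolant and the $H^2$ regularity of $\psi$ recovered from the second dual equation, together with \eqref{interpolation_estimate} for $s=q$ and the a priori bound \eqref{a_priori_estimate}. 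You instead work entirely on the primal side: take $\eta_h^l=I_h^l u$, $\xi_h^l=\Pi_h w$, and exploit symmetry of $b$ so that Ritz orthogonality kills exactly the terms $b(u_h^l-\eta_h^l,w-\xi_h^l)$ and $b(v_h^l,w-\xi_h^l)$ in the proof of Theorem \ref{thm:genFemInfBound}, while every other occurrence of $w-\xi_h^l$ enters through $m$ and needs only $\|w-\Pi_h w\|_{0,2}\le Ch^{2/q}\|w\|_{1,p}$; the same two estimates \eqref{interpolation_estimate} and \eqref{interpolation_estimate_2} then drive the rate, now applied to $(u,w)$ rather than $(\psi,\varphi)$, with $u\in H^2$ supplied by Corollary \ref{cor:torusWPandReg} and Sobolev embedding. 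I checked your bookkeeping claim: it does go through, with the one caveat that the factor $\|w_h^l-\xi_h^l\|_Y$ multiplying the second-equation consistency terms does not vanish but must itself be controlled through the discrete inf-sup (where orthogonality again removes $b(v_h^l,w-\xi_h^l)$), yielding $\|w_h^l-\xi_h^l\|_Y\le C(\mathbb{B}'+\|w_h^l-\xi_h^l\|_L)$ before the final Young absorption. What each approach buys: yours avoids the dual problem altogether and even gives the stronger $W^{1,q}$ control of $u-u_h^l$, at the price of reopening and modifying the abstract convergence proof (and of relying on symmetry of $b$, which the abstract theorem does not assume); the paper's duality argument leaves Theorem \ref{thm:genFemInfBound} untouched and obtains the weak-norm bounds directly, at the price of a hands-on estimate because the dual solution lacks the regularity assumed in Proposition \ref{prop:abstractDualityBound}. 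Both arrive at the same rate $h^{2/q}\|f\|_{X^*}$.
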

\begin{proof}
Firstly, for the well posedness of the finite element method we need only check the assumptions made in Definition \ref{def:genFemSetUp} hold for the choices made in Definition \ref{def:torusFemSetUp}. The space $\mathcal{S}_h$ is a normed vector space and the standard lift operator is linear and injective, see \cite{DziEll13} for details. Each of the functionals defined are bilinear by inspection and $m_h$ is indeed symmetric. 

The approximation properties for $b_h$, $m_h$ and the $L^2$ and $H^1$ type terms in $c_h$ can be proven as in Lemma 4.7 of \cite{DziEll13}, in this case $k=2$.
The main idea is to compare the volume measures on $\Gamma$ and $\Gamma_h$ as well as the corresponding surface gradients $\nabla_\Gamma$
and $\nabla_{\Gamma_h}$. For the term with $\nabla_{\Gamma_h} u_h \cdot \mathcal{H} \circ p \nabla_{\Gamma_h} v_h$ in $c_h$, please keep in mind that $\mathcal{H}= P \mathcal{H} P$. 
Notice also we have treated $c_h$ analogously to the treatment of the surface diffusion term with symmetric mobility tensor in Section 3.1 of \cite{DziEll07}. For the remaining terms in $c_h$, the $1/\rho$ term can be treated in the same manner as the $L^2$ inner product and for the $1/\delta$ term observe
\[
\sum_{k=1}^N u_h(p^{-1}(X_k))v_h(p^{-1}(X_k)) = \sum_{k=1}^N u_h^l(X_k)v_h^l(X_k), 
\]
hence this term makes no contribution to the approximation error. A similar observation shows, in this case,
\begin{equation}
\langle f_h, v_h \rangle = \langle f, v_h^l \rangle.
\label{f_h_equals_f}
\end{equation}
Hence $f_h$ satisfies the required approximation property as does $g_h$ because $g_h=g=0$. We thus have satisfied all of the assumptions of Definition \ref{def:genFemSetUp}, hence the discrete problem is well posed by Theorem \ref{thm:genFemInfBound}. 

For the convergence result we will argue as in Proposition \ref{prop:abstractDualityBound}, however, 
due to the lack of further regularity in this circumstance a more careful argument is required.
Let $(\psi, \varphi) \in X \times Y$ denote the solution to Problem \ref{prob:genSplittingProb} with right hand side
$$
	\eta \mapsto \langle u - u_h^l, \eta \rangle_{H^1(\Gamma)} \quad \textnormal{and} \quad \xi \mapsto \langle w - w_h^l, \xi \rangle_{L^2(\Gamma)}.
$$
It follows
$$
\| u - u_h^l \|_{1,2}^2 + \| w - w_h^l \|_{0,2}^2
= c(\psi, u - u_h^l) + b(u - u_h^l, \varphi) + b(\psi, w - w_h^l) - m(\varphi, w - w_h^l).
$$
As $g = g_h = 0$ in this case we also have
$$
	b(u, \varphi) - m(w, \varphi) = 0,
$$
as well as
$$
	b_h(u_h, (\Pi_h \varphi)^{-l}) - m_h(w_h, (\Pi_h \varphi)^{-l}) = 0.
$$
Hence,
\begin{align*}
& | b( u - u_h^l, \varphi ) - m( w - w_h^l, \varphi ) | = | - b(u_h^l, \varphi) + m(w_h^l, \varphi) |
\\
&\leq | b_h(u_h, (\Pi_h \varphi)^{-l} ) - b(u_h^l, \varphi) | + | m(w_h^l, \varphi) - m(w_h^l, \Pi_h \varphi ) | + | m(w_h^l, \Pi_h \varphi) - m_h(w_h, (\Pi_h \varphi)^{-l} ) |
\\
&\leq C h^2 ( \| u_h^l \|_X \| \Pi_h \varphi \|_Y + \| w_h^l \|_L \| \Pi_h \varphi \|_L ) + C \| w_h^l \|_L \| \varphi - \Pi_h \varphi \|_L,
\end{align*}
where we have used the identity $b(\Pi_h \varphi, u_h^l) = b(\varphi, u_h^l)$ and the geometric estimates already discussed above, which produce the $h^2$ terms.
Then, using (\ref{interpolation_estimate_2}) for $r=p$ and (\ref{a_priori_estimate}), we obtain
\begin{align}
& | b( u - u_h^l, \varphi ) - m( w - w_h^l, \varphi ) |
\leq C h^2 ( \| u_h^l \|_X + \| w_h^l \|_Y ) \| \Pi_h \varphi \|_Y  + C \| w_h^l \|_Y \| \varphi - \Pi_h \varphi \|_L
\nonumber\\
&\leq C h^2 \| f \|_{X^*} \| \varphi \|_Y +  Ch^{2/q} \| f \|_{X^*} \| \varphi \|_{Y}
\label{estimate_ritz_projection_r_q}
\\
&\leq C h^{2/q} \| f \|_{X^*} ( \| u - u_h^l \|_{1,2} + \| w - w_h^l \|_{0,2} ). 
\nonumber
\end{align}
To deal with the two remaining terms, observe that for any $\eta_h \in \mathcal{S}_h$,
\begin{align*}
&| c(\eta_h^l, u - u_h^l) + b(\eta_h^l, w - w_h^l) | = | c(u, \eta_h^l) + b(\eta_h^l, w) - (c(u_h^l, \eta_h^l) + b(\eta_h^l, w_h^l))|
\\
&\leq | \langle f, \eta_h^l \rangle  - \langle f_h, \eta_h \rangle | + | c_h(u_h, \eta_h) + b_h(\eta_h, w_h) - (c( u_h^l, \eta_h^l) + b(\eta_h^l, w_h^l)) |
\\
&\leq Ch^2 \| u_h^l \|_X \| \eta_h^l \|_X  + Ch^2 \| \eta_h^l \|_X \| w_h^l \|_Y \leq Ch^2 \| f \|_{X^*} \| \eta_h^l \|_X,
\end{align*}
where we used (\ref{f_h_equals_f}) and 
the last step follows from (\ref{a_priori_estimate}). Choosing $\eta_h^l = I_h^l \psi$, the Lagrange interpolant, and $s = q$ in (\ref{interpolation_estimate}), we obtain
\begin{align}
 &|c(\psi, u - u_h^l) + b(\psi, w - w_h^l)|
 \leq | c(\psi - I_h^l \psi, u - u_h^l) + b(\psi - I_h^l \psi, w - w_h^l) | + Ch^2 \| f \|_{X^*} \| I_h^l \psi \|_X
 \nonumber \\
 &\leq C \| \psi - I_h^l \psi \|_X ( \| u - u_h^l \|_X + \| w - w_h^l \|_Y ) + C h^2 \| f \|_{X^*} \| I_h^l \psi \|_X
 \nonumber \\
 &\leq C h^{2/q}  \| \psi \|_{2,2} ( \| u - u_h^l \|_X + \| w - w_h^l \|_Y ) + C h^2 \| f \|_{X^*} \| \psi \|_{2,2}
 \label{estimate_lagrange_interpolation_s_q} \\
 &\leq C h^{2/q} ( \| \varphi \|_{0,2} + \| w - w_h^l \|_{0,2}) ( \| u - u_h^l \|_X + \| w - w_h^l \|_Y + \| f \|_{X^*} )
 \nonumber \\
 &\leq C h^{2/q} ( \| u - u_h^l \|_{1,2} + \| w - w_h^l \|_{0,2}) ( \| u - u_h^l \|_X + \| w - w_h^l \|_Y + \| f \|_{X^*} ),
 \nonumber
\end{align}
where we used the regularity of $\psi$ coming from the second equation, that is $b(\psi, \xi) = m(\varphi, \xi) + \langle w - w_h^l, \xi \rangle_{L^2(\Gamma)}$.
The a priori estimates from (\ref{a_priori_estimate}) and Theorem \ref{thm:genSplittingWellPosed} finally give 
$$
	|c(\psi, u - u_h^l) + b(\psi, w - w_h^l)| \leq C h^{2/q} \| f \|_{X^*} ( \| u - u_h^l \|_{1,2} + \| w - w_h^l \|_{0,2} )
$$
The result then follows by combining the estimates derived above.
\end{proof}

\section{Numerical examples}\label{NumExpts}

We conclude  with numerical examples showing that these theoretical convergence rates are achieved in practice. All of the numerical examples given here have been implemented in the DUNE framework, making particular use of the DUNE-FEM module \cite{DuneFem10}.

\subsection{Higher regularity problem}

We consider the problem outlined in Definition \ref{def:4oSplittingExample}, setting $\Gamma=S(0,1)$, the unit sphere, taking 
\[ \mathcal{B}(x)= \left( \begin{array}{ccc}
x_1 & 0 & 0 \\
0 & x_2 & 0 \\
0 & 0 & x_3 \end{array} \right),
\;\mathcal{C}(x)=2+x_1 x_2, \;C_m = 3/2, \; C_M = 5/2, \;\lambda_M = 1, \;\Lambda = 1,
\]
and selecting 
\begin{align*}
\mathcal{F}(x) &:= -5 x_3 (x_1^3 + x_2^3 + x_3^3) + 2x_3(x_1+x_2+x_3) -4x_3 + 4x_3^2 -1 +(1+x_1x_2)x_3 + 7x_1x_2, \\
\mathcal{G}(x) &:= 3x_3 - x_1 x_2. 
\end{align*}

These choices for $\mathcal{F}$ and $\mathcal{G}$ give the solution $(u,w)=(\nu_3,\nu_1\nu_2)$. The example is chosen as it shows that this method can be used to split a fourth order problem where the second order terms make an indefinite contribution to the bilinear form. Explicitly, the fourth order equation solved by $u$ is
\[
\Delta_\Gamma^2 u - \nabla_\Gamma \cdot \left( P\mathcal{B} P\nabla_\Gamma u \right)  + \mathcal{C} u 
= \mathcal{F} + \mathcal{G} - \Delta_\Gamma \mathcal{G}.
\]

The resulting errors and experimental orders of convergence are shown in Tables \ref{table:4oExampleUErrors} and \ref{table:4oExampleWErrors}. 
In each case, for grid size $h$, $E_V(h)$ is the error in the $V$ norm of the finite element approximation. For example, in Table \ref{table:4oExampleUErrors} we have
\[
E_{L^2(\Gamma)}(h) := \|u-u_h^l\|_{0,2}.
\] 
The experimental order of convergence ($EOC$) with respect to the $V$-norm, for tests with grid sizes $h_1$ and $h_2$, is given by
\[
EOC = \frac{\log(E_V(h_1)/E_V(h_2))}{\log(h_1/h_2)}.
\] 
In each of our examples the $EOC$ is calculated between the current $h$ and the previous refinement, so that the denominator is approximately $\log(1/2)$ each time as the grid size approximately halves with each refinement.
Observe that the method achieves the orders of convergence proven in Corollary \ref{cor:4oProbConvBounds}, order $h$ and $h^2$ convergence in the $H^1$ and $L^2$ norms respectively.

\begin{table}[ht]
\centering
\renewcommand{\arraystretch}{1.3}
\begin{tabular}{|c|c c|c c|}
\hline
$h$ & $E_{L^2(\Gamma)}(h)$ &  $EOC$ & $E_{H^1(\Gamma)}(h)$ & $EOC$ \\ \hline  
1.41421  &  $5.51463\times 10^{-1}$  &       -  &  $1.00111$  &       -  \\ \hline
7.07106$\times 10^{-1}$  &  $1.87559\times 10^{-1}$  & 1.55592   &  $6.28156\times 10^{-1}$  &  0.6724  \\ \hline
3.53553$\times 10^{-1}$  &  $5.05247\times 10^{-2}$  & 1.89228  &  $3.22169\times 10^{-1}$  &  0.963307  \\ \hline
1.76776$\times 10^{-1}$  &  $1.29659\times 10^{-2}$  & 1.96227   &  $1.59478\times 10^{-1}$  & 1.01446  \\ \hline
8.83883$\times 10^{-2}$  &  $3.2712\times 10^{-3}$  &  1.98683  &  $7.92569\times 10^{-2}$  & 1.00875  \\ \hline
4.41941$\times 10^{-2}$  &  $8.20352\times 10^{-4}$  &  1.9955  &  $3.95406\times 10^{-2}$  &  1.0032  \\ \hline
2.20970$\times 10^{-2}$  &  $2.05302\times 10^{-4}$  & 1.9985   &  $1.97563\times 10^{-2}$  & 1.00102  \\ \hline
1.10485$\times 10^{-2}$  &  $5.13428\times 10^{-5}$  & 1.99951  &  $9.87605\times 10^{-3}$  &  1.00031 \\ \hline
5.52427$\times 10^{-3}$  &  $1.28369\times 10^{-5}$  & 1.99987 &  $4.93772\times 10^{-3}$  & 1.00009   \\ \hline
\end{tabular}
\caption{Errors and Experimental orders of convergence for $u - u_h^l$. } \label{table:4oExampleUErrors}
\end{table}

\begin{table}[ht]
\centering
\renewcommand{\arraystretch}{1.3}
\begin{tabular}{|c|c c|c c|}
\hline
$h$ & $E_{L^2(\Gamma)}(h)$ &  $EOC$ & $E_{H^1(\Gamma)}(h)$ & $EOC$ \\ \hline  
1.41421  &  $8.14491\times 10^{-1}$  &       -  &  2.03684  &       -  \\ \hline
7.07106$\times 10^{-1}$  &  $5.01333\times 10^{-1}$  &  0.700128  &  1.30646  & 0.640664   \\ \hline
3.53553$\times 10^{-1}$  &  $1.739\times 10^{-1}$  &  1.52752  &  $6.64415\times 10^{-1}$  & 0.975509  \\ \hline
1.76776$\times 10^{-1}$  &  $4.73979\times 10^{-2}$  &  1.87536  &  $3.2514\times 10^{-1}$  & 1.03103  \\ \hline
8.83883$\times 10^{-2}$  &  $1.21214\times 10^{-2}$  &  1.96727  &  $1.61316\times 10^{-1}$  & 1.01117  \\ \hline
4.41941$\times 10^{-2}$  &  $3.04823\times 10^{-3}$  &  1.99151  &  $8.04912\times 10^{-2}$  & 1.00299   \\ \hline
2.20970$\times 10^{-2}$  &  $7.63212\times 10^{-4}$  &  1.99781  &  $4.02248\times 10^{-2}$  & 1.00075  \\ \hline
1.10485$\times 10^{-2}$  &  $1.90877\times 10^{-4}$  &  1.99944  &  $2.01098\times 10^{-2}$  & 1.00018  \\ \hline
5.52427$\times 10^{-3}$  &  $4.77244\times 10^{-5}$  &  1.99985  &  $1.00546\times 10^{-2}$  & 1.00005   \\ \hline
\end{tabular}
\caption{Errors and Experimental orders of convergence for $w - w_h^l$. } \label{table:4oExampleWErrors}
\end{table}

\subsection{Lower regularity problem}
We will next study a problem similar to the point forces problem on a Clifford torus introduced in Definition \ref{def:torusSplitSetup}. For ease of construction of an exact solution we will not study this problem precisely but a similar one on a sphere whose solution exhibits the same regularity, $(u,w) \in W^{3,p}(\Gamma) \times W^{1,p}(\Gamma)$ for any $1<p<2$, as proven in Corollary \ref{cor:torusWPandReg}.
The coupled problem we study, in distributional form, is given by
\begin{align*}
-\DeltaG w + w + \DeltaG u + 2u  &= \delta_N - \frac{1}{4\pi} - \frac{3}{4\pi}x_3 \\
-\DeltaG u + u -w &= \frac{3}{8\pi} \left[ (1-x_3)\log(1-x_3) + \frac{1}{2} - \log(2)  \right]
\end{align*}
where we take $\Gamma$ to be the unit sphere $\Gamma = S(0,1)$ and $\delta_N$ is a delta function centred at the north pole $N=(0,0,1)$. This can be viewed as a second order splitting of the fourth order PDE
\begin{align*}
\DeltaG^2 u - \DeltaG u + 3 u =& \delta_N -\frac{1}{4\pi}-\frac{3}{4\pi}x_3 + g - \DeltaG g
\end{align*}
with $g := \frac{3}{8\pi} \left[ (1-x_3)\log(1-x_3) + \frac{1}{2} - \log(2)  \right]$.

\begin{remark}  The construction of this problem follows by consideration of the function
\[
w(x) = -\frac{1}{4\pi}\left[ \log(1-x_3) -\log(2) +1 +\frac{3x_3}{2}  \right].
\]
This function has a smooth part and a logarithmic part which is based upon the Green's function for the Laplace Beltrami operator on a sphere, see \cite{KimOka87}. That is, in a distributional sense, $w$ satisfies
\[
-\DeltaG w = \delta_N - \frac{1}{4\pi} - \frac{3}{4\pi}x_3.
\]
The logarithmic part of $w$ lies in $W^{1,p}(\Gamma)$ for any $1<p<2$ but is not in $H^1(\Gamma)$. We take $u$ to be 
\[
u(x) = \frac{1}{8\pi} \left[ (1-x_3)\log(1-x_3) + \frac{1}{2} - \log(2)  \right].
\]

\end{remark}

The weak formulation and discretisation of the system is completely analogous to the treatment of the point forces problem described in Definition \ref{def:torusSplitSetup} and Definition \ref{def:torusFemSetUp}. Explicitly, in terms of the general abstract formulation in Problem \ref{prob:genSplittingProb}, we choose
\begin{align*}
& c(u,v):= \int_\Gamma -\nablaG u \cdot \nablaG v + 2uv \;do, 
 ~b(u,v):= \int_\Gamma \nablaG u \cdot \nablaG v + uv \;do, 
~ m(w,v):= \int_\Gamma wv \;do, \\
& \langle f,v \rangle := v(0,0,1) -\frac{1}{4\pi}\int_\Gamma v \;do - \frac{3}{4\pi}\int_\Gamma x_3 v \;do, 
~ \langle g,v \rangle := \int_\Gamma \frac{3}{8\pi} \left[ (1-x_3)\log(1-x_3) + \frac{1}{2} - \log(2)  \right] v \;do.
\end{align*}
The finite element method formulation is also completely analogous to the treatment of the point forces problem. Explicitly, in terms of the general abstract formulation in Problem \ref{prob:genSplittingFem}, we choose  
\begin{align*}
& c_h(u_h,v_h):= \int_{\Gamma_h} -\nabla_{\Gamma_h} u_h \cdot \nabla_{\Gamma_h} v_h + 2u_hv_h \;do_h, ~ b_h(u_h,v_h):= \int_{\Gamma_h} \nabla_{\Gamma_h} u_h \cdot \nabla_{\Gamma_h} v_h + u_hv_h \;do_h, \\
&m_h(w_h,v_h):= \int_{\Gamma_h} w_hv_h \;do_h, 
~ \langle f_h,v_h \rangle := v_h(p^{-1}(0,0,1)) -\frac{1}{4\pi}\int_{\Gamma_h} v_h \;do - \frac{3}{4\pi}\int_{\Gamma_h} (x_3)^{-l} v_h \;do_h, \\
& \langle g_h,v_h \rangle := \int_{\Gamma_h} \frac{3}{8\pi} \left[ (1-x_3)\log(1-x_3) + \frac{1}{2} - \log(2)  \right]^{-l} v_h \;do_h.
\end{align*}

The finite element method converges at the rates proven in Corollary \ref{cor:oneMinusEpsBounds}, where only the case $g = g_h = 0$ was addressed.
The experimental order of convergence is given in Tables \ref{table:deltaProbUErrors} and \ref{table:deltaProbWErrors}. 
In fact, we observe linear convergence in this example. The proof of this convergence rate is left for future research.
\begin{table}[h]
\centering
\renewcommand{\arraystretch}{1.3}
\begin{tabular}{|c|c c|c c|}
\hline
$h$ & $E_{L^2(\Gamma)}(h)$ &  $EOC$ & $E_{H^1(\Gamma)}(h)$ & $EOC$ \\ \hline  
1.41421  &  $7.2206\times 10^{-2}$  &       -  &  $9.60127\times 10^{-2}$  &       -  \\ \hline
7.07106$\times 10^{-1}$  &  $2.68314\times 10^{-2}$  & 1.4282   &  $4.81314\times 10^{-2}$  &  0.996248  \\ \hline
3.53553$\times 10^{-1}$  &  $7.71427\times 10^{-3}$  &  1.79832  &  $2.4304\times 10^{-2}$  &  0.985781  \\ \hline
1.76776$\times 10^{-1}$  &  $2.04304\times 10^{-3}$  & 1.91681   &  $1.24533\times 10^{-2}$  & 0.964672  \\ \hline
8.83883$\times 10^{-2}$  &  $5.30802\times 10^{-4}$  &  1.94447  &  $6.31331\times 10^{-3}$  & 0.980055 \\ \hline
4.41941$\times 10^{-2}$  &  $1.37634\times 10^{-4}$  &  1.94734  &  $3.17379\times 10^{-3}$  &  0.992192  \\ \hline
2.20970$\times 10^{-2}$  &  $3.57961\times 10^{-5}$  & 1.94296   &  $1.58979\times 10^{-3}$  & 0.997373 \\ \hline
1.10485$\times 10^{-2}$  &  $9.3513\times 10^{-6}$  & 1.93656  &  $7.95344\times 10^{-4}$  &  0.999182 \\ \hline
5.52427$\times 10^{-3}$  &  $2.45312\times 10^{-6}$  & 1.93055 &  $3.97739\times 10^{-4}$  & 0.999757   \\ \hline
\end{tabular}
\caption{Errors and Experimental orders of convergence for $u - u_h^l$. } \label{table:deltaProbUErrors}
\end{table}

\begin{table}[h]
\centering
\renewcommand{\arraystretch}{1.3}
\begin{tabular}{|c|c c|}
\hline
$h$ & $E_{L^2(\Gamma)}(h)$ &  $EOC$ \\ \hline  
1.41421  &  $1.28739\times 10^{-1}$  &       -   \\ \hline
7.07106$\times 10^{-1}$  &  $4.91831\times 10^{-2}$  &  1.38821   \\ \hline
3.53553$\times 10^{-1}$  &  $2.37553\times 10^{-2}$  &  1.04991  \\ \hline
1.76776$\times 10^{-1}$  &  $1.25937\times 10^{-2}$  &  0.915547  \\ \hline
8.83883$\times 10^{-2}$  &  $6.5736\times 10^{-3}$  &   0.937948  \\ \hline
4.41941$\times 10^{-2}$  &  $3.35583\times 10^{-3}$  &  0.970015 \\ \hline
2.20970$\times 10^{-2}$  &  $1.69215\times 10^{-3}$  &  0.987811 \\ \hline
1.10485$\times 10^{-2}$  &  $8.48703\times 10^{-4}$  &  0.995527 \\ \hline
5.52427$\times 10^{-3}$  &  $4.24803\times 10^{-4}$  &  0.998466 \\ \hline
\end{tabular}
\caption{Errors and Experimental orders of convergence for $w - w_h^l$. } \label{table:deltaProbWErrors}
\end{table}

\bibliography{refs}{}
\bibliographystyle{amsplain}

\end{document}